\documentclass[11pt,leqno]{amsproc} 

\sloppy
\usepackage{epsfig}
\usepackage[utf8]{inputenc}
\usepackage{framed}

\usepackage{url}
\urlstyle{sf}

\usepackage{mathtools}
\mathtoolsset{showonlyrefs}

\usepackage[T1]{fontenc}

\usepackage{amssymb}
\usepackage{graphicx, color,blindtext}
\usepackage{caption}
\usepackage{subcaption}
\usepackage[export]{adjustbox}
\usepackage[titletoc,toc]{appendix}

\usepackage{floatrow}
\usepackage{dsfont}
\floatsetup[table]{capposition=top}
\usepackage{stmaryrd}
\usepackage{mathrsfs}
\usepackage{graphicx}
\usepackage{float}
\usepackage{algorithm}
\usepackage{algpseudocode}
\usepackage{tikz}
\usepackage{epstopdf}


\setlength{\textwidth}{15cm}
\setlength{\textheight}{20cm}
\voffset=-2cm
\hoffset=-1cm
\topmargin 1cm
\headsep 1cm


\newtheorem{theorem}{Theorem}

\newtheorem{lemma}{Lemma}
\newtheorem{remark}{Remark}
\newtheorem{corollary}{Corollary}

\newtheorem{definition}{Definition}
%


\newcommand{\C}{\mathcal{C}}

\newcommand{\D}{\mathcal{D}}

\newcommand{\K}{\mathcal{K}}
\newcommand{\M}{\mathcal{M}}

\newcommand\Lip{\mathrm{Lip}}
\newcommand\Lipu{{\mathrm{Lip}}_1}
\newcommand\Omb{\bar{\Omega}}

\newcommand\wW{{\widetilde{W}}_1}

\DeclareMathOperator{\argmax}{argmax}
\DeclareMathOperator{\argmin}{argmin}

%
\newcommand\CC{\hbox{C\kern -.58em {\raise .54ex \hbox
			{$\scriptscriptstyle |$}}
		\kern-.55em {\raise .53ex \hbox{$\scriptscriptstyle |$}} }}
\newcommand\qd{\hfill$\sqcap\kern-8.0pt\hbox{$\sqcup$}$}
\newcommand\NN{\hbox{I\kern-.2em\hbox{N}}}
\newcommand\nn{\hbox{I\kern-.2em\hbox{N}}}
\newcommand\RR{I\!\!R}
\newcommand\sRR{{\sl \hbox{I\kern-.2em\hbox{R}}}}
\newcommand\QQ{\hbox{I\kern-.53em\hbox{Q}}}



























\everymath{\displaystyle}

\usepackage{stackengine}
\usepackage{scalerel}
\usepackage{xcolor,amssymb}

\newcommand{\cs}{$^\dagger$} \newcommand{\cm}{$^\ddagger$} \newcommand{\cmm}{$^\mp$}

\title{Congested Crossing  Pedestrian Traffic Flow  : Dispersion vs. Transport in Crowded Areas}

\author[M. Al Khatib]{Mariam Al Khatib\cm}
\author[S. Gounane]{Said Gounane\cs}
\author[N. Igbida]{Noureddine Igbida\cm}
\author[G. Jradi]{Ghadir Jradi\cmm }
\thanks{\cm Institut de recherche XLIM-DMI, UMR-CNRS 6172, Facult\'e des Sciences et Techniques, Universit\'e de Limoges, France. Emails:   noureddine.igbida@unilim.fr,  mariam.al\_khatib@unilim.fr}
\thanks{\cs MIMSC Laboratory, Higher School of Technology Essaouira, Morocco. Emails:   s.gounane@uca.ac.ma}
\thanks{\cmm GJ began this research project while completing her doctoral thesis. She is currently working on it independently, unaffiliated with any institution, ghadirjradi@gmail.com}




	\markboth{M. Al Khatib, S. Gounane, N. Igbida and G. Jradi }{Congested Crossing  Pedestrian Traffic Flow}
	
\date{\today}

\begin{document}
	 
	\maketitle

	\begin{abstract}  This study investigates the complex dynamic interactions between two typed populations coexisting within a shared space. We propose both theoretical and numerical study to analyze scenarios where one population (population $1$) must traverse a territory occupied by another (population $2$), necessitating strategies to mitigate overcrowding caused by spatial limitations.   To capture these interactions, we model population $1$ using a linear transport equation, while population $2$ is described by a granular diffusion model à la sandpile to represent its internal dynamics and tendency to decongest.
		Through numerical simulations, we explore how different movement strategies of the traversing population (population $1$) – including directed motion towards a specific destination, internal dispersion to minimize crowding, and uniform dispersal across the space – affects the behavior of population $2$.  
	\end{abstract}
	
	\textbf{keywords: }Crowed motion; congestion, transport equation, $1-$Wasserstein distance, $W_1-$gradient flow, minimum-flow problem, primal-dual numerical optimization.
	
	\textbf{AMS Subject Classification:} 22E46, 53C35, 57S20.
	
	\section{Introduction}
	\medskip
	Consider two distinct populations $\rho_1$ and $\rho_2$, occupying a common space, denoted by $\Omega.$ Population $\rho_1$ must traverse an area inhabited by $\rho_2$ requiring a strategic interaction to manage congestion.  Given the limited spatial capacity, the dynamic interplay between these populations can easily lead to congestion.
	
	In this paper, we develop a macroscopic mathematical model for these congested pedestrian flows. By treating the crowd as a collective entity, we can apply this approach to large-scale crowd dynamics. Building upon previous works (\cite{Bord,Helbing1,Helbing2}), we use the continuity  equation to describe the macroscopic dynamics of each population:
	\begin{equation}\label{transport1}
		\partial_t \rho_i +\nabla \cdot (\rho_i\: U_i) =0, \quad \hbox{ for }i=1,2,
	\end{equation}
	where  $\rho_i=\rho_i(t,x)$ represents the density of individuals at time $t\geq 0$ and position $x\in \Omega\subset  \RR^N$ $(N=2),$ assumed to be a bounded and regular domain.  The dynamic  of each $\rho_i$  must ensure an admissible global distribution of the population : 
	\begin{equation}
		0\leq \rho_1+\rho_2\leq \rho_{max},
	\end{equation} 
	where $\rho_{max}$  a given maximum density    (assumed to be known).  Without loss of generality, we assume throughout the paper that 
	$$\rho_{max}=1. $$ 
	 
	\medskip
	Determining the flow velocity vector field $U_i$ remains a challenging task, in general. Although various approaches have been explored, a universal solution is elusive. Balancing the general behavior of the crowd with the behavior of the individual pedestrians is critical. 
	Our approach assumes that while $\rho_1$ moves toward a fixed target, $\rho_2$ can adjust its movement based on the instantaneous distribution of both populations. For example, assuming that $U_1=V$ is a known spontaneous velocity field,  we assume that $U_2$  is a vector field that performs the patch for the spontaneous dynamic when the pedestrian $\rho_2$ is hindered by the other one.  We propose to consider a dynamic vector field $U_2$  in congested regions given by :
	\begin{equation}
		U_2 = U_2[\nabla p],
	\end{equation}
	where $p$ is a Lagrange multiplayer associtae with the constraint  $0\leq \rho_1+\rho_2\leq \rho_{max}:$ $p\geq 0$ and $p(\rho_{max}-\rho_1-\rho_2)=0.$ 
	This model ensures that $\rho_1+\rho_2$ does not exceed $\rho_{max}$ and provides a framework for analyzing the dynamics of these two populations in congested environments.

	\medskip
	Inspired by ~\cite{EIJ},  we introduce a grain-like motion velocity field sandpile to model the random microscopic movements of $\rho_2$ agents, facilitating the passage of $\rho_1$ while adhering to spatial distribution constraints. 
	More precisely,  while the population $\rho_1$ follows the transport equation 
	\begin{equation}\label{transport0}
		\partial_t \rho_1 +\nabla \cdot (\rho_1\: V) =0	,\quad \hbox{ in }Q:=(0,T)\times \Omega,\end{equation}
	with $\rho_1(0)=\rho_{01},$   we propose to handle the dynamic  of the second population  $\rho_2,$ by using the nonlinear second order equation  where $\rho_2= \rho_{max}-\rho_1$ workout the utmost distribution of the population in $\Omega$  :    
	\begin{equation}\label{EvolGran1}
		\left\{ 
		\begin{array}{ll} 
			\displaystyle \frac{d \rho_2 }{d t}+  \partial {I\!\! I}_{\Lipu} (p)=  0  \\  \\    0\leq m,\: 	p\geq 0, \: 0\leq \rho_2\leq \rho_{max}-\rho_1,\: p(\rho_{max}-\rho_1-\rho_2)=0
		\end{array} 
		\right.
	\end{equation}
	with $\rho_2(0)=\rho_{02},$ where $\partial {I\!\! I}_{\Lipu}$ is the subdifferential of the indicator faction of $\Lipu,$ the set of $1-$Lipschitz continuous function.

	\medskip 
	While the natural boundary condition for the transport equation  \eqref{transport0}  incorporates the normal trace of the velocity field $V$ (assumed to be known),  we examine the dynamics governed by \eqref{EvolGran1} under Neumann boundary conditions. However, \eqref{transport0}  still requires Dirichlet boundary conditions on portions of the boundary where V points inward.
	
	\medskip
	
	The model's adaptability to various practical scenarios is a key feature. The present paper focuses on a scenario in which only the population $\rho_2$ experiences congestion, with a priority placed on the movement of $\rho_1$. However, the framework can be extended to model situations in which both populations exhibit congestion and transport behaviors. This type of coupled system has been explored theoretically in ~\cite{IgCD}. The model presented here specifically describes the dynamics of two populations: one with a primary objective (e.g., emergency evacuation) and another that adapts to facilitate the first's passage while maintaining acceptable density. We conducted an analysis of various transportation types for  $\rho_1$ : (1) directed movement towards a goal, modeled by a velocity field $V$ derived from an eikonal equation;   (2) motion influenced by local average density, modeled by a nonlocal potential for $V,$ and also  motion   inspired by the Keller-Segel models, where the potential is determined by a diffusion equation associated with $\rho_2.$

	
	\medskip
	To our knowledge, this study is the first to address congestion within this specific context. While research on cross-diffusion systems, where populations exhibit behaviors to alleviate overcrowding (akin to 'overcrowding dispersal' in our model), provides some relevant background, these studies primarily focus on segregation phenomena. In contrast, our work examines aggregation scenarios where both populations interact within the same space. Our model introduces distinct dynamics for each population: transport for $\rho_1$ and granular dynamics for $\rho_2.$ Future work will investigate scenarios where both populations exhibit both transport and congestion behaviors. 
 \medskip
	In the following section, we present the main results for the transport equation of $\rho_1,$ considering a bounded $W^{1,1}$-velocity field $V$ with bounded divergence.
		
	Section $3$ introduces a solution concept for the evolution problem  \eqref{EvolGran1} and establishes its existence using an implicit time discretization scheme within the framework of the $1-$Wasserstein distance, $W_1,$ and the Kantorovich potential. This approach treats problem \eqref{EvolGran1} in the flavor of a $W_1$-gradient flow in the dual space of $\Lipu.$ 
	
	Section $4$ outlines the numerical algorithm used in this study. Adapting the prediction-correction approach from ~\cite{EIJ}, we first transport $\rho_1$ according to the transport equation \eqref{transport0}. Subsequently, an implicit time discretization scheme is applied for the dynamics described in  \eqref{EvolGran1}.   The discrete dynamics for $\rho_2$  are then determined by a projection operator onto the set of admissible densities, ensuring $0\leq \rho_1+\rho_2\leq 1,$ which can be efficiently implemented using a minimal flow approach.
	
	Finally, Section $6$ presents a series of numerical simulations to illustrate the  behavior of the model.

	\section{Transport flow in population  $\rho_1$}
	\setcounter{equation}{0}
	 
	 	\subsection{Reminder on   linear transport equation in a bounded domain}
	
	In the context of directed crowd movement, where individuals follow a predefined path towards a specific destination, the velocity vector field $V$ plays a crucial role. To accurately model this behavior, $V$ is typically specified with a prescribed normal flux on the boundary. This ensures that the crowd movement aligns with the desired dynamics at the domain's edges.

	We consider the transport equation 
	\begin{equation}\label{PDEtransport0}
		\left\{ 
		\begin{array}{ll} 
			\displaystyle \frac{\partial  u }{\partial t}  + \nabla\cdot (   u\: V )= 0\quad & \hbox{ in } Q\\  \\   
			u(0)= u_0 & \hbox{ in }\Omega, 
		\end{array} 
		\right.
	\end{equation} 
	where   $V$  is assumed to satisfy  
	\begin{itemize}
		\item[$(T1)$] $\displaystyle V \in  L^1(0,T;W^{1,1}(\Omega)^N)\cap L^\infty(Q)^N$ 
		\item[$(T2)$]  $  \nabla \cdot V\in L^\infty(Q) .$
	\end{itemize}

	Under the previous assumptions, $V\cdot \nu \in L^\infty((0,T)\times \partial\Omega),$ where $\nu$ is   the outward unit normal vector to the boundary  $\partial \Omega,$ is well defined.  The behavior of the vector field $V$ significantly influences population dynamics. In a closed area, where the population is confined, $V\cdot \nu =0 $  on the boundary $\partial \Omega,$  signifying no population flux across the domain's edges. In an open domain, the sign of $V\cdot \nu  $  determines the direction of population flow: $V\cdot \nu \geq 0  $ allows for population outflow, while $V\cdot \nu  \leq 0 $ indicates population inflow. More complex scenarios arise when the sign of $V\cdot \nu  $ changes along the boundary. For instance, $V\cdot \nu  $ may be negative on some parts of the boundary, allowing inflow, and positive on others, enabling outflow, resulting in a dynamic where population enters and exits the domain through different sections of its boundary. This variability in the normal trace of $V$  significantly influences the overall population dynamics within the domain.

	Following   ~\cite{Donadello1,Donadello2}, to incorporate the boundary condition for \eqref{PDEtransport0}, we define the inflow boundary as :
	$$\Sigma^- := \Big\{(t,x)\in (0,T)\times \partial \Omega \: :\:  V\cdot \nu <0\Big\},$$ 
	and consider the following evolution problem 
	
	\begin{equation} \label{PDEtransportBC}
		\left\{ 
		\begin{array}{ll} 
			\displaystyle \frac{\partial  u }{\partial t}  + \nabla\cdot (   u\: V )= 0 \quad & \hbox{ in } Q\\  \\   
			u   =0 & \hbox{ on }\Sigma^-  \\  \\  
			u(0)= u_0 & \hbox{ in }\Omega.
		\end{array} 
		\right.
	\end{equation}

	\begin{theorem}\label{theorho1V} 
		Assume $V$ satisfies the assumption $(T1)$ and $(T2),$   and  $  u_0\in L^\infty(\Omega)$,   Then,   
		the problem \eqref{PDEtransportBC}    has a unique   weak solution $ u$ in the sense that 	:   $ u\in L^\infty(Q)$,  $( u\: V)\cdot \nu \in L^\infty(\Sigma) $, $( u\: V)\cdot \nu   =0 ,$ $\mathcal H^ {N-1}-$a.e. on $\Sigma^- ,$ and  
		\begin{equation}\label{weakF0}
			\frac{d}{dt} 	\int_\Omega    u\: \xi \: dx -\int_\Omega  u\: V\cdot \xi \: dx = 0 , \hbox{ in } \D([0,T)),
		\end{equation}	 
		for any $\xi \in \C^\infty_c(\Omega).$   In particular, $( u\: V)\cdot \nu   $ is uniquely well defined, and we have    
		\begin{equation} \label{renorm}
			\begin{array}{c} 
				\frac{d}{dt} \int_\Omega  u \: \xi   -\int_\Omega   u\: V\cdot  \nabla \xi    =\int_\Omega    f\:   \xi  -  \int_{\partial\Omega} \xi\:    u  \: V \cdot \nu \: d\mathcal H^{N-1}, \hbox{ in } \D([0,T)).
		\end{array} 	 \end{equation}  
		If moreover,   $V$ satisfies 
		\begin{equation}\label{divpositif}
			\nabla \cdot V \geq 0 \quad \hbox{ a.e. in }Q.
		\end{equation} 
		and 	 $ 0\leq  u_0 ,$  then  
		\begin{equation}
			0\leq   u(t)\leq \Vert  u_0 \Vert_\infty  ,\quad \hbox{ a.e. in }Q. 
		\end{equation} 
	\end{theorem}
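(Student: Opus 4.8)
The plan is to place problem \eqref{PDEtransportBC} within the DiPerna--Lions theory of renormalized solutions, adapted to a bounded domain carrying a partial Dirichlet condition on the inflow boundary $\Sigma^-$, in the spirit of the trace framework of the cited works. For \textbf{existence} I would first regularize: mollify $V$ in the space variable to obtain smooth fields $V_\eps \to V$ strongly in $L^1(Q)^N$ with $\norm{V_\eps}_\infty \le \norm{V}_\infty$ and $\norm{\nabla\cdot V_\eps}_\infty \le \norm{\nabla\cdot V}_\infty$, and mollify $u_0$. For smooth data, \eqref{PDEtransportBC} is solved classically along the characteristics of $V_\eps$, carrying the value $0$ into the domain through $\Sigma^-$, and the divergence bound $(T2)$ furnishes a uniform $L^\infty(Q)$ estimate on $u_\eps$. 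Extracting a weak-$*$ limit $u$, the linearity lets me pass to the limit in the flux via the decomposition $u_\eps V_\eps = u_\eps(V_\eps - V) + u_\eps V$: the first term is $O(\norm{V_\eps-V}_{L^1})\to 0$ since $\norm{u_\eps}_\infty$ is bounded, and the second converges because $u_\eps \rightharpoonup u$ weak-$*$ against $V\xi \in L^1$. This yields a weak solution satisfying \eqref{weakF0}.

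The crux is the \textbf{renormalization} property: every $L^\infty$ weak solution should satisfy, for $\beta\in\C^1(\Real)$,
\[
\partial_t\beta(u) + \nabla\cdot\bigl(\beta(u)\,V\bigr) = \bigl(\beta(u) - u\,\beta'(u)\bigr)\,\nabla\cdot V \quad\text{in }\mathcal D'(Q).
\]
This rests on the DiPerna--Lions commutator lemma: with $u_\delta = u * \eta_\delta$, the commutator $\nabla\cdot(u_\delta V) - \bigl(\nabla\cdot(uV)\bigr)*\eta_\delta$ tends to $0$ in $L^1_{loc}$ exactly because $V\in W^{1,1}$ under $(T1)$. Since $uV\in L^\infty(Q)^N$ and $\nabla\cdot(uV) = -\partial_t u$ is controlled, $uV$ is a divergence-measure field, so its normal trace $(uV)\cdot\nu$ is well defined in $L^\infty(\Sigma)$; the associated Gauss--Green formula then produces the identity \eqref{renorm}, and the prescribed condition $u=0$ on $\Sigma^-$ forces $(uV)\cdot\nu = 0$ there.

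For \textbf{uniqueness}, the difference $w$ of two solutions with the same data solves the homogeneous problem; applying the renormalized identity with $\beta(s)=s^2$ and integrating over $\Omega$ gives
\[
\frac{d}{dt}\int_\Omega w^2\,dx = -\int_\Omega w^2\,\nabla\cdot V\,dx - \int_{\partial\Omega} w^2\,(V\cdot\nu)\,d\mathcal H^{N-1}.
\]
The boundary integrand vanishes on $\Sigma^-$ (where $w=0$) and is nonnegative on the outflow part (where $V\cdot\nu\ge 0$), so the boundary term has the favorable sign, while the volume term is bounded by $\norm{\nabla\cdot V}_\infty\int_\Omega w^2$; Grönwall's inequality with $w(0)=0$ gives $w\equiv 0$. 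For the \textbf{maximum principle} I would use the truncations $\beta(s)=(s-M)_+$ with $M=\norm{u_0}_\infty$ and $\beta(s)=(-s)_+$. For the upper bound, $\beta(s)-s\beta'(s) = -M\,\mathbf{1}_{\{s>M\}}\le 0$, so the volume term is nonpositive under \eqref{divpositif}, the inflow term vanishes ($u=0\le M$), and the outflow term is nonpositive, whence $\frac{d}{dt}\int_\Omega (u-M)_+\le 0$ and $u\le M$. For the lower bound, $\beta(s)-s\beta'(s)\equiv 0$, so the volume term disappears and the same boundary analysis yields $u\ge 0$.

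The genuine \textbf{obstacle} is the renormalization step for a merely $W^{1,1}$ field: establishing the commutator estimate and rigorously constructing the normal trace on $\Sigma$, so that \eqref{renorm} holds and the trace vanishes on $\Sigma^-$, is the technical heart of the argument; once these are in hand, the a priori bounds and the Grönwall arguments for uniqueness and the maximum principle are routine.
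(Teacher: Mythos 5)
Your proposal is correct and follows essentially the same route the paper relies on: the paper gives no self-contained proof of Theorem \ref{theorho1V}, deferring entirely to the cited works of Crippa--Donadello--Spinolo and Ambrosio--Crippa(--Maniglia), and those references establish the result by precisely the machinery you sketch --- regularization for existence, the DiPerna--Lions commutator lemma for renormalization of $L^\infty$ solutions of a $W^{1,1}$ field, normal traces of divergence-measure fields for the Gauss--Green identity \eqref{renorm}, and Gr\"onwall/truncation arguments for uniqueness and the $L^\infty$ bounds. You also correctly locate the technical heart (the commutator estimate and the boundary-trace theory, including the renormalization property of traces needed to pass from $w$ to $w^2$ on $\Sigma$ and the stability of the inflow condition under mollification of $V$), which is exactly where the cited papers do the real work.
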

	\begin{proof}
		The results of this theorem are more or less well known by now. For the proof we refer the reader to the papers  ~\cite{Donadello1,Donadello2} and also  ~\cite{Ambrosio,AC,ACM}.
	\end{proof}

	\begin{remark}
		Beyond boundary conditions, the divergence of the velocity field, $\nabla \cdot V$,  plays a crucial role in population dynamics, particularly concerning congestion phenomena. Expansive velocity fields, characterized by $\nabla \cdot V\geq 0,$ tend to prevent excessive population densities, while compressive velocity fields, where $\nabla \cdot V<0  $  can potentially lead to congestion and overcrowding.
	\end{remark}

	\begin{remark}
		Taking $\xi\equiv 1$ in the formulation \eqref{renorm}, we obtain the following mass conservation   property 
		\begin{equation}\label{mass conservation}
			\frac{d}{dt} 	\int_\Omega    u\:dx    =   -  \int_{\partial\Omega  }    \underbrace{  u\: V \cdot \nu}_{\geq 0} \: d\mathcal H^{N-1} , \hbox{ in } \D([0,T)).
		\end{equation}	 
		
	\end{remark}

	\begin{corollary} \label{CorVpostive}
		Under the assumptions of Theorem \ref{theorho1V}, if moreover 
		\begin{equation}\label{outwardV}
			V\cdot \nu \geq 0,\quad  \mathcal H^{N-1}-\hbox{ a.e. on }\Sigma, 
		\end{equation}
		then, for any   $    u_0\in L^\infty(\Omega)$,
		the problem 
		\begin{equation} \label{PDEtransportNB1}
			\left\{ 
			\begin{array}{ll} 
				\displaystyle \frac{\partial  u }{\partial t}  + \nabla\cdot (   u\: V )=  0 \quad & \hbox{ in } Q\\  \\    
				u(0)= u_0 & \hbox{ in }\Omega, 
			\end{array} 
			\right.
		\end{equation}    
		has a unique   weak solution $ u$ in the sense that 	:   $ u\in L^\infty(Q)$ and  
		\begin{equation}\label{weakF1}
			\frac{d}{dt} 	\int_\Omega    u\: \xi \: dx -\int_\Omega  u\: V\cdot \xi \: dx = 0 , \hbox{ in } \D([0,T)),
		\end{equation}	 
		for any $\xi \in \C^\infty_c(\Omega).$   In particular, $( u\: V)\cdot \nu \in L^\infty(\Sigma) $ is uniquely well defined. 
		
	\end{corollary}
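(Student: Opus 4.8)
The plan is to read the corollary as a direct specialization of Theorem \ref{theorho1V}: under the extra hypothesis \eqref{outwardV} the inflow boundary becomes negligible, so that the boundary-value problem \eqref{PDEtransportBC} and the pure Cauchy problem \eqref{PDEtransportNB1} coincide. First I would record the elementary observation that $\Sigma^-=\{(t,x)\in\Sigma : V\cdot\nu<0\}$ is $\mathcal H^{N-1}$-negligible. Indeed \eqref{outwardV} asserts $V\cdot\nu\geq 0$ at $\mathcal H^{N-1}$-a.e. point of $\Sigma$, so the strict set $\{V\cdot\nu<0\}$ is null, and any condition prescribed solely on $\Sigma^-$ is vacuous.

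Next I would note that the weak formulation \eqref{weakF1} demanded in the corollary is \emph{verbatim} the formulation \eqref{weakF0} of the theorem, since both test only against $\xi\in\C^\infty_c(\Omega)$ and hence carry no boundary contribution. The only remaining discrepancy between \eqref{PDEtransportBC} and \eqref{PDEtransportNB1} is the prescription $(uV)\cdot\nu=0$ on $\Sigma^-$, which by the previous step holds automatically and conveys no information.

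With these two remarks the argument closes. Theorem \ref{theorho1V} provides a unique $u\in L^\infty(Q)$ solving \eqref{PDEtransportBC} with a well-defined normal trace $(uV)\cdot\nu\in L^\infty(\Sigma)$; since the condition on $\Sigma^-$ is vacuous, this $u$ is exactly a solution of \eqref{PDEtransportNB1}, yielding existence together with the claimed regularity of the trace. For uniqueness I would start from any $u\in L^\infty(Q)$ satisfying \eqref{weakF1}: it satisfies the identical identity \eqref{weakF0} and, $\Sigma^-$ being null, it trivially meets the empty boundary condition, so it solves \eqref{PDEtransportBC}; the uniqueness clause of Theorem \ref{theorho1V} then forces it to equal the solution just constructed.

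I do not anticipate a genuine obstacle. The single point requiring care is bookkeeping: one must check that the a.e. statement in \eqref{outwardV} is taken with respect to the same measure defining $\Sigma^-$ (the product of Lebesgue measure on $(0,T)$ with surface measure on $\partial\Omega$), so that $\{V\cdot\nu<0\}$ is genuinely negligible, and that the well-definedness of $(uV)\cdot\nu$ is invoked from the theorem rather than reproved.
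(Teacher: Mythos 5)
Your proposal is correct and follows essentially the same route as the paper, whose entire proof is the observation that under \eqref{outwardV} the inflow set $\Sigma^-$ is empty, so Theorem \ref{theorho1V} applies directly. Your version is in fact slightly more careful on the one fine point: since \eqref{outwardV} holds only $\mathcal H^{N-1}$-a.e., $\Sigma^-$ is negligible rather than literally empty, and your bookkeeping that the trace condition on a null set is vacuous (for both existence and uniqueness) is exactly the right way to make the paper's one-line argument precise.
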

	\begin{proof}
		This is a   consequence of Theorem \ref{theorho1V} and the fact that, under the assumption \eqref{outwardV}, 	$\Sigma^-=\emptyset.$ 
	\end{proof}

	
	\subsection{Application to directed crowd motion for population   $\rho_1$}

	In this paper, we focus on admissible scenarios where the vector field $V$ is such that the dynamics of population $\rho_1$ do not lead to congestion in the absence of population $\rho_2.$  So,   we assume that $V$ is a given vector field  satisfying the assumptions $(T1)$, $(T2)$ and moreover \eqref{divpositif}.
	Then, let us consider $0\leq \rho_{01}\leq 1$ representing the given initial density of population   $\rho_1.$ Below, we present the key theoretical results which follow directly from the previous section. 
	Notice that the assumptions on $V$  we are dealing with are such the population is not concerned with the congestion ; i.e. $0\leq \rho_1\leq 1$ a.e. in $Q.$ However, the evolution of $\rho_1$  will inevitably compel population $\rho_2$ to react in order to avoid the global congestion (the phenomenon concerning $\rho_2$ is treated in Section $3$).

	\bigskip\noindent 
	\underline{\textbf{No population flux   in a closed area}:} here we assume that no population exchange across certain sections of the boundary, represented by zero normal flux, effectively acting as walls.  In this case the normal trace of $V$ is such that 
	$$V \cdot \nu =0,\quad  \hbox{  on   }\Sigma.$$  
	Then, the evolution problem 	
	\begin{equation}  
		\left\{ 
		\begin{array}{ll} 
			\displaystyle \frac{\partial  \rho_1 }{\partial t}  + \nabla\cdot (   \rho_1\: V )=  0 \quad & \hbox{ in } Q\\  \\    
			\rho_1(0)= \rho_{01} & \hbox{ in }\Omega, 
		\end{array} 
		\right.
	\end{equation}     
	has a unique solution $\rho_1,$ which satisfies  :  
	\begin{itemize} 
		\item  $ \rho_1\in L^\infty(Q)$ 
		\item $ 0\leq \rho_1\leq 1,$ a.e. in $Q$ 
		\item $( \rho_1\: V)\cdot \nu =0,$ $ \mathcal H^{n-1}-$a.e. on $\Sigma$ 
		\item for any $\xi \in \C^\infty (\overline \Omega),$   
		\begin{equation} 
			\frac{d}{dt} 	\int_\Omega    \rho_1\: \xi \: dx -\int_\Omega  \rho_1\: V\cdot \xi \: dx =  0, \hbox{ in } \D([0,T)).
		\end{equation}	  
		\item for a.e. $t\in (0,T),$ we have 
		$$\int_\Omega \rho_1(t,x)\: dx =\int_\Omega \rho_{01}(x)\: dx.$$
	\end{itemize}

	\bigskip\noindent 
	\underline{\textbf{Population outflow in an  open  area:}}  here we assume  that we have  an outflow of population, modeled by an outward normal flux at exit points.  In this case  
	$$V \cdot \nu  \geq 0,\quad  \hbox{  on   }\Sigma.$$  
	Then, the evolution problem 	
	\begin{equation}  
		\left\{ 
		\begin{array}{ll} 
			\displaystyle \frac{\partial  \rho_1 }{\partial t}  + \nabla\cdot (   \rho_1\: V )=  0 \quad & \hbox{ in } Q\\  \\    
			\rho_1(0)= \rho_{01} & \hbox{ in }\Omega, 
		\end{array} 
		\right.
	\end{equation}     
	has a unique solution $\rho_1,$ which satisfies  :  
	\begin{itemize} 
		\item  $ \rho_1\in L^\infty(Q)$ 
		\item $ 0\leq \rho_1\leq 1,$ a.e. in $Q$ 
		\item $0\leq ( \rho_1\: V)\cdot \nu \in L^\infty(\Sigma),$  
		\item for any $\xi \in \C^\infty (\overline \Omega),$   
		\begin{equation} 
			\frac{d}{dt} 	\int_\Omega    \rho_1\: \xi \: dx -\int_\Omega  \rho_1\: V\cdot \xi \: dx =  - \int_{\partial\Omega } ( \rho_1\: V)\cdot \nu  \: \xi \: d \mathcal H^{n-1}  , \hbox{ in } \D([0,T)).
		\end{equation}	  
		\item for a.e. $t\in (0,T),$ we have 
		$$\int_\Omega \rho_1(t,x)\: dx =\int_\Omega \rho_{01}(x)\: dx- \int_0^t\!\! \int_{\partial\Omega } ( \rho_1\: V)\cdot \nu  \:  d \mathcal H^{n-1} .$$
	\end{itemize}

	\bigskip\noindent 
	\underline{\textbf{Mixed inflow-outflow population   in  input-output    area:}}  In this case,  we assume that 
	there exists an non-negligible subset $\Sigma^- \subset (0,T)\times \partial\Omega ,$ such that 	for a.e. $t\in (0,T),$ 		$$V\cdot \nu < 0,\quad  \hbox{ on  } \Sigma^+ .$$  
	The evolution problem 	
	\begin{equation}  
		\left\{ 
		\begin{array}{ll} 
			\displaystyle \frac{\partial  \rho_1 }{\partial t}  + \nabla\cdot (   \rho_1\: V )=  0 \quad & \hbox{ in } Q\\   \\    \rho_1 =0 & \hbox{ on } \Sigma^-  \\  \\
			\rho_1(0)= \rho_{01} & \hbox{ in }\Omega, 
		\end{array} 
		\right.
	\end{equation}     
	has a unique solution $\rho_1,$ which satisfies  :  
	\begin{itemize} 
		\item  $ \rho_1\in L^\infty(Q)$ 
		\item $ 0\leq \rho_1\leq 1,$ a.e. in $Q$ 
		\item $0\leq ( \rho_1\: V)\cdot \nu \in L^\infty(\Sigma),$  
		\item  $\rho_1\: V \cdot \nu =0$  $ \mathcal H^{n-1}-$a.e. on $\Sigma^-$  
		\item for any $\xi \in \C^\infty (\overline \Omega),$   
		\begin{equation} 
			\frac{d}{dt} 	\int_\Omega    \rho_1\: \xi \: dx -\int_\Omega  \rho_1\: V\cdot \xi \: dx = - \int_{\partial\Omega } ( \rho_1\: V)\cdot \nu  \: \xi \: d \mathcal H^{n-1}  , \hbox{ in } \D([0,T)).
		\end{equation}	  
		\item for a.e. $t\in (0,T),$ we have 
		$$\int_\Omega \rho_1(t,x)\: dx   =\int_\Omega \rho_{01}(x)\: dx- \int_0^t\!\! \int_{\partial \Omega } ( \rho_1\: V)\cdot \nu  \:  d \mathcal H^{n-1} .$$
	\end{itemize}

	\begin{remark}\label{RemTransport}
		Despite restricting our theoretical analysis to the assumptions of Theorem 1, the algorithm and numerical computations remain effective in several broader scenarios:  
		
		\begin{enumerate}
			\item Compressive vector fields with small initial data: our approach can be extended to compressive vector fields $\nabla \cdot V < 0$ coupled with small initial data $\rho_{01}.$ 
			
			\item  Non-local potential with Gaussian kernel: The velocity field  $V$ can be determined by a non-local potential, $V=-\nabla (\rho_1\star K_\sigma),$ where $K_\sigma$ 	is a Gaussian kernel.   This captures the influence of neighboring individuals on population movement and ensures a repulsive effect, preventing overcrowding. While primarily capturing repulsive behaviors, this framework can be adapted to study aggregative phenomena.
			
			\item Diffusion-based potential: Alternatively, we can employ the Green's function of the Laplacian as the kernel, effectively solving a diffusion equation at each step to determine the potential for $V.$ This diffusion-based approach provides a more nuanced representation of the influence of neighboring individuals on population movement. 
			
		\end{enumerate}

	\end{remark}
	
	\section{Congestion management  theory } 
	\setcounter{equation}{0} 
	\subsection{Developing a congestion management model}
	As discussed in the introduction, spatial constraints can lead to congestion when the directed movement of the first population interacts with the second population of density $\rho_2.$ Specifically, the total density, $\rho_1+\rho_2,$  may exceed the maximum permissible density, $\rho_{max}=1.$ While population  $\rho_1$  maintains its singular focus on its objective, following the prescribed velocity field $V,$ and disregards any global congestion, we introduce a 'decongestion' mechanism   for the initially static population $\rho_2$  to prevent congestion arising from their interaction with $\rho_1.$ We propose a dynamic model for this scenario, inspired by the behavior of grains of sand settling into a stable heap. The first population can be likened to a continuous influx of sand that exerts pressure on the existing structure, while the second population represents the existing sand grains that are effectively rearranged to accommodate these new additions. 
	
	To this aim, we consider the following evolution problem:
	\begin{equation}\label{Evol2}
		\frac{d\rho_2 }{dt}  +\partial {I\!\! I}_{\Lipu}(p)\ni 0, \quad \hbox{ in }(0,T)
	\end{equation}
	where $p$ is the Lagrange multiplier associated with the constraint 
	$\rho_1+\rho_2\leq 1$ ; i.e. 
	\begin{equation}\label{Lagrangerho2}
		p\geq 0, \: 0\leq \rho_2\leq 1-\rho_1,\: p(1-(\rho_1+\rho_2)) =0 	 ,\quad   \hbox{ in } Q. 
	\end{equation}
	Here $\partial {I\!\! I}_{\Lipu}(p)$ denotes the sub-differential of the indicator function of $\Lipu$, the set of 1-Lipschitz functions on $\Omega$, defined as:
	$$  {I\!\! I}_{\Lipu}(p)=\left\{ 
	\begin{array}{ll}
		0 & \hbox{ if }p\in \Lipu\\ 
		+\infty & \hbox{ otherwise}. 
	\end{array}
	\right. $$
	The precise definition of  $\partial {I\!\! I}_{\Lipu}$ using the duality pairing will be provided  in   Section \ref{section:Theoretical study} (see \eqref{partialLip1}).  Remember that (cf. \cite{AgCaIg,DJ,IgEquiv,IgEvol}), the evolution problem \eqref{Evol2}    can be reformulated in terms of PDE as:
	\begin{equation}\label{PDEcorrection}
		\left\{   
		\begin{array}{ll}
			\displaystyle \frac{\partial \rho_2 }{\partial t}  - \nabla\cdot (  m\: \nabla p ) =  0\\  \\ 
			m\geq 0,\: 	 	\vert \nabla p\vert  \leq 1,\: m(1-\vert \nabla p\vert)=0, 
		\end{array}
		\right\} \quad \hbox{ in }Q,
	\end{equation}
	subject to \eqref{Lagrangerho2} and Neumann boundary condition:
	\begin{equation}\label{BCrho2}
		m\: \nabla p  \cdot \nu =0 \quad \hbox{ on }\Sigma.
	\end{equation}
	Indeed, this formulation is closely connected to the observation that, employing a test function of the form $p-\xi$, where $  \xi \in \Lipu$, in equation \eqref{PDEcorrection}, we formally obtain:
	\begin{equation} \label{weakrho2}
		\int_{\Omega} \partial_t \rho_2 \: (p-\xi) = -	\int_{\Omega} m \nabla p \cdot \nabla (p-\xi) = \int_{\Omega} m (1 - \nabla p \cdot \nabla \xi) \leq 0,
	\end{equation} 
	which is equivalent to 
	\begin{equation} 
		-\partial_t \rho_2 \in \partial {I\!\! I}_{\Lipu}(p). 
	\end{equation} 
	However, as is typical for this equivalence, $m$ is generally a Radon measure. This requires the use of the tangential gradient for the gradient of $p$, a specialized approach (see ~\cite{Bouchitte&al} for further details). To circumvent this complexity, we focus on the variational formulation \eqref{Evol2}, drawing inspiration from studies such as ~\cite{AgCaIg,DJ,IgEquiv,IgEvol} that address similar problems in slightly different contexts. 
	
	The function $\rho_1$ in \eqref{PDEcorrection} and \eqref{Evol2} is given by the solution of the transport equation from the previous section. Specifically, it is assumed that $\rho_1\in L^\infty(Q)$, $0\leq \rho_1\leq 1$ a.e. in $Q$, and satisfies:
	\begin{equation} \label{weakrho1}
		\frac{d}{dt} 	\int_\Omega    \rho_1\: \xi \: dx = \int_\Omega  \rho_1\: V\cdot \xi \: dx    , \hbox{ in } \D([0,T)).
	\end{equation}	   for any $\xi \in \C^\infty_c (\Omega).$   Here $V\in L^\infty(Q)^N \cap L^1(0,T,W^{1,1}(\Omega)^N)$ and  $0\leq \nabla \cdot V \in L^\infty(Q)$. 
	\medskip 
	
	As previously mentioned, the variational approach will play a crucial role in our characterization of the solutions to \eqref{PDEcorrection}. 
	However, before proceeding further with the formulation, it's important to note that the   key term $	\int_{\Omega} \partial_t \rho_2 \: (p-\xi) $ in the  variational formulation associated with   \eqref{Evol2}  necessitates increased regularity assumptions on either $\rho_1$, $\rho_2$ or $p,$ which are not generally expected to hold.   Indeed, 
	using \eqref{Lagrangerho2}, we formally see  that     
	\begin{equation}\label{formal10}
		\begin{array}{lll}	 \int_\Omega \partial_t \rho_2\: (p-\xi) &=&  \int \partial_t (1-\rho_1)  \: p - \int_\Omega \partial_t \rho_2 \: \xi \\  \\  &=& -  \int  \partial_t \rho_1  \: p - \int_\Omega \partial_t \rho_2\: \xi  \\  \\  
			&=&  - \int  \partial_t  \rho_1  \: p -      \frac{d}{dt}\int_\Omega \rho_2 \: \xi   . 
	\end{array} \end{equation}  
	
	Contrary to the findings reported in the aforementioned studies ( \cite{IgUr,AgCaIg}) where $\rho_1$ was essentially zero, the non-zero nature of $\rho_1$ in this study renders the application of the same approach employed in ~\cite{IgUr,AgCaIg} ineffective.

	\bigskip 
	To circumvent this issue, we propose to incorporate the transport equation  \eqref{PDEtransport0} into  \eqref{Evol2}, achieving   the nonlinear dynamic for the total density $\rho:=\rho_1+\rho_2$ :  
	\begin{equation}
		\frac{d}{dt} \rho +\partial {I\!\! I}_{\Lipu}(p)   \ni f
	\end{equation}
	where   $f:=-\nabla \cdot(\rho_1\: V).$  Since  $\rho_1$ is a solution of the transport equation \eqref{PDEtransport0}, $f=-\nabla \cdot(\rho_1\: V),$ should to be understood in distributional sense :  
	\begin{equation}\label{def-f}
		\begin{array}{c} 
			\langle f(t),\xi  \rangle =\int_\Omega \rho_1(t,x)\: V(t,x) \cdot \nabla \xi(x)\: dx    \\ \\  - \int_{\partial\Omega } ( \rho_1(t,x)\: V(t,x))\cdot \nu(x)  \: \xi(x) \: d \mathcal H^{n-1} (x) , \end{array} 
	\end{equation}
	for any $\xi \in \C^\infty(\overline   \Omega),$ which is well defined.  The formulation \eqref{def-f}  can also be interpreted within the framework of the duality bracket between $\Lip ,$ the space of $1-$Lipschitz functions, and $\Lip'$,  its topological dual space, which will be introduced later. 
	
	\medskip 
	
	In the preceding section, the density was characterized by $\rho_1$. In this section, the focus will be on solving the evolution problem \eqref{Evol2} and \eqref{Lagrangerho2}. Following the same line of reasoning as in the formal computation \eqref{formal10}, we observe  that:
	
	\begin{equation}  	 \int_\Omega \partial_t \rho \: (p-\xi) =    -      \frac{d}{dt}\int_\Omega \rho\: \xi   . 
	\end{equation} 
	Combining this observation with the definition of $\partial I\!\! I_{\Lipu},$  we arrive at the following variational formulation for the total density : 
	\begin{equation}
		- \frac{d}{dt}\int_\Omega \rho\: \xi \leq  \langle f,p-\xi\rangle , \quad \hbox{ in }\D'([0,T)).
	\end{equation}
	That is 
	\begin{equation} \label{varformrho2}
		- \frac{d}{dt}\int_\Omega \rho\: \xi - \int_\Omega \rho_1\: V \cdot \nabla ( p-\xi) \: dx  \leq    - \int_{\partial\Omega } ( \rho_1\: V)\cdot \nu  \: (p-\xi) \: d \mathcal H^{n-1}  , \quad \hbox{ in }\D'([0,T)), 
	\end{equation} 
	for any $\xi \in \Lipu.$  
	
	Finlay, the study of the management  of the congested crossing  pedestrian traffic flow  follows by first solving  the transport equation as in the previous section, and then solving the evolution problem \eqref{Evol2} combined with the state equation \eqref{Lagrangerho2} through the variational formulation \eqref{varformrho2}. 
	
	\subsection{Theoretical study}\label{section:Theoretical study}
	
	Thanks to the last section, we now focus on the theoretical study of the evolution problem.
	
	\begin{equation}\label{Evolrho2}
		\left\{  \begin{array}{ll}  
			\displaystyle \frac{\partial \rho }{\partial t}  +  \partial {I\!\! I}_{\Lipu} (p) =  f \\  \\    
			0\leq p ,\:  0\leq \rho\leq 1,\:  p(1-\rho ) =0 , 
			\\  \\  
			\rho (0)=\rho_{01}+\rho_{02}, 
		\end{array}\right. 
	\end{equation}
	where $f=-\nabla \cdot (\rho_1\: V)$ is given by \eqref{def-f} and $\rho_1$   satisfy   \eqref{weakrho1}
	with $\rho_2(0)=\rho_{02}.$

	\bigskip 
	
	Thanks to the discussion above, the notion of solution we have in mind is given as follows :
	\begin{definition}\label{def2}
		A   function $\rho_2$ is said to be a weak solution of \eqref{Evolrho2}, if $0\leq \rho_2\in L^\infty(Q),$  $0\leq \rho:= \rho_1+\rho_2\leq 1 $ a.e. in $Q$ and, there exists $0\leq p\in L^\infty (Q),$ s.t. $p(t)\in\Lip1$, a.e. $t\in (0,T),$  $p(1-\rho)=0,$ a.e. in $Q,$   and 
		\begin{equation}  \label{WeakEvolrho2} 
			- \frac{d}{dt}\int_\Omega (\rho_1+\rho_2) \: \xi \leq \langle f,p-\xi\rangle  , \quad \hbox{ in }\D'([0,T)), 
		\end{equation} 
		for any $\xi \in \Lipu,$ $\rho_2(0)=\rho_{02}.$    
	\end{definition}

	\begin{remark}
		By   definition of $f,$ equation \eqref{WeakEvolrho2} can be rewritten as 
		\begin{equation}  
			- \frac{d}{dt}\int_\Omega (\rho_1+\rho_2) \: \xi \leq  \int_\Omega \rho_1\: V \cdot \nabla ( p-\xi) \: dx       - \int_{\partial\Omega } ( \rho_1\: V)\cdot \nu  \: (p-\xi) \: d \mathcal H^{n-1}  , \quad \hbox{ in }\D'([0,T)), 
		\end{equation} 
		for any $\xi \in \Lipu.$ This, in turn, is equivalent, by virtue of \eqref{weakrho1}, to 
		\begin{equation}  
			- \frac{d}{dt}\int_\Omega   \rho_2  \: \xi \leq  \int_\Omega \rho_1\: V \cdot \nabla  p  \: dx       - \int_{\partial\Omega } ( \rho_1\: V)\cdot \nu  \:  p  \: d \mathcal H^{n-1}  , \quad \hbox{ in }\D'([0,T)), 
		\end{equation}  
		
	\end{remark}

	\bigskip 
	
	We suggest studying   problem \eqref{Evolrho2} using $1$-Wasserstein distance, $W_1,$ and the Kantorovich potential. To do this, we will use the following notations and tools related to $W_1$  (a complete overview of the theory in may be found in the book \cite{Stbook}).
	
	
	\begin{itemize} 
		\item $\Lip:=W^{1, \infty}(\Omega)$ the set of Lipschitz functions on $\Omega$, 
		\item  $\Lipu$ the set of $1$-Lipschitz functions on $\Omega$.  
		\item  $\Lip'_0:=\{h \in \Lip' \; : \; \langle h \rangle :=\langle h, 1 \rangle=0\},$ where  $ \Lip'$ is  the dual (topological) space of $\Lip$ and $\langle.,.\rangle$ denotes the   duality pairing between $\Lip'$ and $\Lip.$ .
		\item  For any $h\in \Lip'_0$,  we denote by $W_1$ the dual semi-norm of $h$:
		$$W_1(h):=\sup_{\theta\in \Lipu}  \langle h, \theta \rangle.$$
		When $h$ is signed measure of the form $h=h^+-h^-$, with $h^{\pm}$ probability measures on $\Omb$, $W_1(h)$ is the $1$-Wasserstein  distance between $h^+$ and $h^{-}$ (see ~\cite{Stbook}).  
		
		\item For a general $h\in \Lip'_0$, $W_1(h)$ may be infinite. To ensure $W_1(h)<\infty,$  one needs to  restrict $h$ to a space $X'$, where $\Lip\subset X$ is such that the embedding of $\Lip$ into $X$     is compact.    A function $\theta\in \Lip_1$ for which $W_1(h)= \langle h, \theta\rangle$ is called a Kantorovich potential of $h$. We restrict our attention to non-negative potentials, as it is always possible to find at least one such potential whenever the existence of a potential is guaranteed. We denote by $\K(h)$ the set of all Kantorovich potentials of $h$, that is: 
		\[\K(h):=\{0\leq \theta \in \Lipu \; : \; \langle h, \theta\rangle\ge \langle h, \psi\rangle, \; \forall \psi \in \Lipu\}.\]
		In particular, this enables to define $ \partial {I\!\! I}_{\Lipu} (p) $ in terms of Kantorovich potential  as follows :  \begin{equation}\label{partialLip1}
			h\in \partial {I\!\! I}_{\Lipu} (p) \quad  \Longleftrightarrow \quad  p\in \Lipu,\: h\in \Lip_0'\hbox{ and  }p\in \K(h).  
		\end{equation}

		\item    Fenchel-Rockafellar duality theorem gives the following dual formula for $W_1(h)$:
		\begin{equation}\label{dualw1}
			W_1(h)=\inf_{\sigma \in(L^{\infty}(\Omega)')^N} \Big\{ \Vert  \sigma\Vert_{(L^{\infty}(\Omega)')^N }     \; : \;  \langle  \sigma , \nabla \xi \rangle\: dx =\langle h,\xi\rangle, \forall\: \xi \in \Lip \Big\}.
		\end{equation} 
		Moreover, one can prove that for any $g\in \Lip'_0$,  \eqref{dualw1} admits solutions (in $(L^{\infty}(\Omega)')^N$ and  not in $L^1$ in general), such solutions are called optimal flows.
		A Kantorovich potential $\theta\in \K(h)$ is related to an optimal flow $\sigma$ in \eqref{dualw1} by the extremality relation 
		\[\langle \sigma, \nabla \theta\rangle_{(L^{\infty}(\Omega)')^N, L^{\infty}(\Omega)^N} =  \Vert \sigma\Vert_{(L^{\infty}(\Omega)')^N} \mbox{ with } \Vert \nabla \theta \Vert_{L^{\infty}} \le 1\]
		which, very informally, means that $\sigma$ is concentrated on the set where $\vert \nabla \theta\vert$ equals $1$ and is collinear to $\nabla \theta$.  
		If $\sigma\in L^1,$ then previous relation reveals that $\sigma =m\:  \nabla \theta$ with $m\ge 0$ and satisfies the complementary   condition $m\: (1-\vert \nabla \theta\vert)=0$.   This finding formally supports the formulation of equation \eqref{Evolrho2} based on PDE \eqref{PDEcorrection}. Furthermore, the flux $\sigma$ exhibits a tangential alignment with the boundary $\partial \Omega$ in a weak sense.
		
		\item  For an arbitrary $h\in \Lip'$ (not necessarily balanced), we can define  
		\[\wW(h):=W_1(\underbrace{h-\frac{1}{\vert \Omega\vert }\langle h \rangle}_{=:\hat  h})+   \frac{1}{\vert \Omega\vert } \vert \langle h \rangle \vert\]
		and observe that $\wW$ is equivalent to the usual norm of $\Lip'$.
		When there is non confusing, we'll  use again the notation $W_1(h)$  instead of $\wW(h)$ even if $h\in \Lip'\setminus \Lip'_0.$
		
	\end{itemize}

	Given that $f(t)$, defined by \eqref{def-f} for a.e. $t\in (0,T)$, belongs to $\Lip'$,  for any $z\in \Lipu,$    we have 
	\begin{equation}  \label{estfz}
		\begin{array}{ll}  	\langle f(t),z\rangle  &= 	\langle\hat  f(t),z \rangle 
			+ 	\langle f(t)\rangle \: \oint_\Omega  z\\ \\ 
			&\leq 	W_1( \hat   f(t)  )
			+ 	\langle f(t)\rangle \: \oint_\Omega  z . \end{array} \end{equation}   
	Furthermore, both   $	\langle f(t)\rangle$ and $W_1( \hat   f(t)  )$ are well defined and belong to the space  $L^\infty(0,T)$. Indeed,  for a.e. $t\in (0,T),$ we have 
	\begin{equation} 
		\langle f(t)\rangle = -\int_{\partial\Omega } ( \rho_1\: V)\cdot \nu   \: d \mathcal H^{n-1}, \quad \hbox{ for a.e. }t\in (0,T), \end{equation}  
	and, moreover we see that 
	\begin{eqnarray*}
		W_1(\widehat    f(t)) &=& \max_{\theta\in \Lipu}  \langle f(t) , \theta -\oint_\Omega \theta \rangle   \\  \\  
		&=& \max_{\theta\in \Lipu} \left\{  \int_\Omega  \rho_1(t)\: V(t)\cdot \nabla \theta \: dx   - \int_{\partial\Omega } ( \rho_1\: V)\cdot \nu  \: (\theta- \oint_\Omega  \theta  ) \: d \mathcal H^{n-1}  \right\}   \\  \\ 
		&\leq&  C(\Omega,N)\: \left( \Vert V(t)\Vert_1   +  \Vert \vert  (\rho_1\: V)\cdot \nu  \Vert_{L^1(\partial \Omega} \right)   . 
	\end{eqnarray*}  
	
	\medskip 
	Now, considering the previous observations, it is natural to interpret the evolution problem  \eqref{Evolrho2}  
	as  the inclusion
	\begin{equation}\label{wf0}
		p(t) \in \K(f(t)-\partial_t \rho(t)), \quad    \: 0\leq \rho(t)\leq 1,\:  0\leq p(t), \: p(t)(1-\rho(t))=0, 
	\end{equation}
	for any $t\in (0,T).$
	In particular, one sees that this formulation inherently imposes a   conservation of mass, represented  by $$\langle f(t)-\partial_t \rho(t) \rangle=0, \quad \hbox{ for any }t\in [0,T),$$   which aligns   with the boundary condition in \eqref{BCrho2} and the weak formulation \eqref{weakrho1}.  Moreover, thanks to the definition of $f$, \eqref{def-f}  implies that 
	\begin{equation}\label{massf}
		\frac{d}{dt}\int_\Omega \rho(t)\: dx = 	\langle f(t)  \rangle = - \int_{\partial\Omega } ( \rho_1(t)\: V(t))\cdot \nu   \: d \mathcal H^{n-1} ,  \quad \hbox{ for a.e. }t\in (0,T), 
	\end{equation}
	which  implies in turns  
	\begin{equation}\label{massfudef}
		\M(t):=   \int_\Omega \rho(t)\: dx =  \int_\Omega \rho_0\: dx  +\int_0^t 	\langle f(t)  \rangle  \: dt. 
	\end{equation}
	Since $0\leq \rho\leq 1$ a.e. in $Q,$ this property implies a necessary condition for the existence of a solution to the problem \eqref{Evolrho2} :  
	\begin{equation}\label{sufcond0}
		0\leq \M(t) \leq \vert \Omega\vert  ,  \quad \hbox{ for a.e. }t\in (0,T) . 
	\end{equation}

	With all this in mind, we can now state the main result of this section. 
	
	\begin{theorem}\label{thrho2}  Let $0\leq \rho_{02}\in L^\infty(\Omega),$ be such that $0\leq \rho_{01}+\rho_{02}\leq 1$ a.e. in $\Omega$, and satisfying moreover 
		\begin{equation}\label{sufcond}
			\sup_{t\in [0,T)}\M(t) =: \overline \M  < \vert \Omega\vert   ,  \quad \hbox{ for a.e. }t\in (0,T). 
		\end{equation}
		Then, the problem \eqref{Evolrho2} has a weak solution $\rho_2$ in the sense of Definition \ref{def2}. Moreover, we have 
		\begin{equation}\label{massconserv}
			\int_{\Omega} (\rho_1(t)+\rho_2(t))\: dx =\M(t),\quad \hbox{ a.e. }t\in [0,T),
		\end{equation}
		and \begin{equation}\label{massconserv2}
			\frac{d}{dt}\int_\Omega \rho_2(t)\: dx =0.
		\end{equation}
		
	\end{theorem}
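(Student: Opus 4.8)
The plan is to construct a solution by an implicit (backward--Euler) time discretization which, at each step, reduces to a $W_1$-projection onto the admissible set, in the spirit of \cite{AgCaIg,DJ,IgEquiv,IgEvol} and of the prediction--correction strategy of \cite{EIJ}. Fix $n\in\mathbb N$, set $\tau=T/n$, $t_k=k\tau$, and let $\rho_1^k$ be the value at $t_k$ of the transport solution of Section 2, so that $(\rho_1^{k+1}-\rho_1^k)/\tau$ is a discrete avatar of $f=-\nabla\cdot(\rho_1 V)$. Starting from $\rho_2^0=\rho_{02}$, define $\rho_2^{k+1}$ as a solution of the constrained minimization
\[
\rho_2^{k+1}\in\argmin\Big\{\,W_1(w-\rho_2^k)\ :\ 0\le w\le 1-\rho_1^{k+1}\,\Big\},
\]
and let $0\le p^{k+1}\in\Lipu$ be an associated Kantorovich potential. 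The first task is the well-posedness of one step: the constraint set is a weakly-$*$ closed convex subset of $L^\infty(\Omega)$, and it is nonempty within the mass class of $\rho_2^k$ precisely because assumption \eqref{sufcond} yields $\int_\Omega(1-\rho_1^{k+1})\,dx>\int_\Omega\rho_{02}\,dx=\int_\Omega\rho_2^k\,dx$, so there is room to fit the mass of $\rho_2^k$ under the ceiling $1-\rho_1^{k+1}$; since $W_1$ is lower semicontinuous, the direct method gives a minimizer. Writing the optimality conditions identifies $p^{k+1}$ as the multiplier of the constraint $w\le 1-\rho_1^{k+1}$, whence the discrete complementarity $p^{k+1}\,(1-\rho_1^{k+1}-\rho_2^{k+1})=0$ together with $p^{k+1}\in\K\big(\rho_2^k-\rho_2^{k+1}\big)$, i.e. the backward--Euler form of \eqref{Evolrho2}.

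Next I would record the uniform a priori bounds. By construction $0\le\rho_2^{k+1}\le 1-\rho_1^{k+1}$, hence $0\le\rho^{k+1}:=\rho_1^{k+1}+\rho_2^{k+1}\le 1$. Because $W_1$ is finite only between measures of equal mass, each projection preserves the mass of $\rho_2$, so $\int_\Omega\rho_2^{k}\,dx=\int_\Omega\rho_{02}\,dx$ for every $k$ (the discrete form of \eqref{massconserv2}), and the mass of $\rho$ moves only through the $\rho_1$-update, giving the discrete analogue of \eqref{massfudef}. Each potential is $1$-Lipschitz and normalized nonnegative, so $0\le p^{k+1}\le\mathrm{diam}(\Omega)$ uniformly. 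The crucial estimate is the time regularity: the constraint violation created in one step satisfies $\big(\rho_2^k-(1-\rho_1^{k+1})\big)^+\le(\rho_1^{k+1}-\rho_1^k)^+$ (as $\rho_2^k\le 1-\rho_1^k$), so the mass displaced by the projection is controlled by the $\rho_1$-increment, which is $O(\tau)$ in view of the $L^\infty(0,T)$ bounds on $\langle f\rangle$ and $W_1(\widehat f)$ established just before the theorem. This yields $W_1(\rho_2^{k+1}-\rho_2^k)\le C\tau$, i.e. the piecewise-constant interpolant $\rho_{2,\tau}$ is uniformly Lipschitz in time for $W_1$.

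With these bounds I would pass to the limit $\tau\to0$. The $L^\infty$ bound together with the $W_1$-equicontinuity in time allow, by an Arzel\`a--Ascoli argument in the $W_1$ metric, to extract a subsequence with $\rho_{2,\tau}\to\rho_2$ and $\rho_\tau\to\rho$ in $C([0,T];\Lip')$ and weakly-$*$ in $L^\infty(Q)$. The uniform Lipschitz and sup bounds on the potentials make $\{p_\tau(t)\}$ precompact in $C(\overline\Omega)$, so along a further subsequence $p_\tau\to p$ \emph{strongly}, with $p(t)\in\Lipu$ and $p\ge0$. This strong convergence of the pressures is exactly what is needed for the nonlinear terms: it gives $\int_\Omega p\,(1-\rho)\,dx=\lim\int_\Omega p_\tau(1-\rho_\tau)\,dx=0$, hence the complementarity $p(1-\rho)=0$ a.e., and it lets one pass to the limit in the discrete inequality $\langle(\rho_2^{k+1}-\rho_2^k)/\tau,\,p^{k+1}-\xi\rangle\le0$, which combined with the weak formulation \eqref{weakrho1} for $\rho_1$ produces \eqref{WeakEvolrho2}. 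Passing to the limit in the discrete mass identities then yields \eqref{massconserv} and \eqref{massconserv2}, and the initialization $\rho_2^0=\rho_{02}$ with the time-continuity gives $\rho_2(0)=\rho_{02}$. I expect the main obstacle to be precisely this limit in the nonlinear coupling --- reconciling the weak-$*$ convergence of the densities with the Kantorovich/complementarity structure --- which is why securing the strong compactness of $p_\tau$ from the one-sided gradient bound $\vert\nabla p_\tau\vert\le1$ is the linchpin of the argument; the strict feasibility guaranteed by \eqref{sufcond} is what keeps the displaced mass, and hence $\Vert p_\tau\Vert_\infty$, from degenerating as the constraint $\rho=1$ saturates.
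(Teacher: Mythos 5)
Your skeleton is the paper's (implicit Euler à la JKO with $W_1$, Kantorovich potentials as discrete pressures, uniform bounds, limit passage), but the step you yourself call the linchpin contains a genuine gap. From $\vert\nabla p_\tau\vert\le 1$ and a sup bound, Arzelà--Ascoli gives precompactness of $\{p_\tau(t)\}_\tau$ in $C(\overline\Omega)$ only \emph{for each fixed $t$}; to extract a subsequence with $p_\tau\to p$ strongly in any space--time sense (a.e.\ $t$, or in $L^1(0,T;C(\overline\Omega))$) you would need equicontinuity of $t\mapsto p_\tau(t)$, and the scheme provides no time regularity whatsoever for the pressure: $p^{k+1}$ is determined by the instantaneous saturation pattern and may oscillate arbitrarily between consecutive steps. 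Consequently your identity $\int\!\!\int p\,(1-\rho)=\lim\int\!\!\int p_\tau(1-\rho_\tau)$ is a weak-times-weak product limit (note the second equality is trivial, since the piecewise-constant interpolants satisfy $p_\tau(1-\rho_\tau)=0$ exactly), and this is precisely what fails in general. The paper never obtains strong pressure compactness: it only has $p_n\rightharpoonup p$ weakly in $L^q(0,T;W^{1,q}(\Omega))$, and it recovers $p_n\tilde\rho_n\to p\rho$ weak-$*$ via a compensated-compactness / nonlinear Aubin--Lions variant (Andreianov--Cancès--Moussa, Moussa), pairing the $L^1(0,T;(\Lip',\wW))$ bound on $\partial_t\tilde\rho_n$ with the spatial regularity of $p_n$, and then kills $\int\!\!\int p_n(1-\tilde\rho_n)$ by the $O(\tau_n)$ estimate $\int\!\!\int p_n(\rho_n-\tilde\rho_n)\le C\tau_n$. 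Likewise, in the variational inequality the paper never passes to the limit in $\langle\partial_t\tilde\rho_n,p_n\rangle$: it discards that term using the sign $p_n\,\partial_t\tilde\rho_n\ge 0$ (where $p_{k+1}>0$, complementarity forces $\rho_{k+1}=1\ge\rho_k$), which is exactly why Definition \ref{def2} is an inequality. Without either strong pressure compactness (unavailable) or these two devices, your limit passage does not close.

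Two secondary points. First, your justification of the pressure bound --- ``$1$-Lipschitz and normalized nonnegative, so $0\le p^{k+1}\le\mathrm{diam}(\Omega)$'' --- is not a proof: nonnegativity is only a lower normalization, and the constant part of a potential is a priori unbounded. The bound does hold, but through \eqref{sufcond}: since $\int_\Omega\rho^{k+1}\le\overline\M<\vert\Omega\vert$, the complementarity $p^{k+1}(1-\rho^{k+1})=0$ forces $p^{k+1}$ to vanish on a set of measure at least $\vert\Omega\vert-\overline\M>0$, and $1$-Lipschitz continuity then yields the sup bound; this is the content, in averaged form, of the paper's Lemma \ref{lptaubounded}, and your closing remark shows you sensed the role of \eqref{sufcond}, but the mechanism must be made explicit. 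Second, your one-step problem (project $\rho_2^k$ under the ceiling $1-\rho_1^{k+1}$) is the paper's \emph{numerical} prediction--correction scheme of Section 4, whereas the theoretical proof discretizes the equation for the total density $\rho=\rho_1+\rho_2$ keeping the source $f_k^\tau$ explicit in the minimization; your variant is morally equivalent but introduces an additional splitting error, and your claim that the displaced mass is $O(\tau)$ in $L^1$ is not guaranteed (only the $W_1$-type dual norm of $\rho_1^{k+1}-\rho_1^k$ is $O(\tau)$, since $\partial_t\rho_1$ is merely a distributional divergence), so the time-regularity estimate should be run, as in the paper, directly in the $W_1$ semi-norm against a feasible competitor.
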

	
	\medskip 
	\begin{remark}
		\begin{enumerate}
			\item Since $0\leq \rho_{02} \leq 1,$  	instead of working with   condition \eqref{sufcond} in all the time interval $(0,T),$   one can replace  $T,$ by $T_a$ given by 
			\begin{equation} 	\label{Tadm}
				T_a:=  \sup\left\{ t\in [0,T)\: :\:  0\leq \M(s)  < \vert \Omega\vert   ,  \quad \hbox{ for a.e. }s\in (0,t)   \right\}.  
			\end{equation}
			Yet, one needs to assume that $T_a>0.$ 
			
			\item By definition fo $f,$ one sees that  
			\begin{equation}\label{massfu}
				\begin{array}{ll}
					\M(t)  &=  \int_\Omega \rho_0\: dx  -\int_0^t\!\!  \int_{\partial\Omega } ( \rho_1(t)\: V(t))\cdot \nu  \: d \mathcal H^{n-1} ,  \quad \hbox{ for a.e. }t\in (0,T) \\  \\ 
					&=  \int_\Omega \rho_0\: dx  -\int_\Omega \rho_{01}\: dx + \int_\Omega \rho_{1}(t)\: dx    ,  \quad \hbox{ for a.e. }t\in (0,T) \\  \\ 
					&=  \int_\Omega \rho_{02}\: dx   + \int_\Omega \rho_{1}(t)\: dx    ,  \quad \hbox{ for a.e. }t\in (0,T)\\  \\ 
					&=  \int_\Omega (\rho_{1}(t)+\rho_{2}(t))\: dx    ,  \quad \hbox{ for a.e. }t\in (0,T).  
				\end{array} 
			\end{equation} 
			
			Condition \eqref{sufcond}   ensures that the combined density of both populations $\rho_1$ and $\rho_2$  remains strictly below $1$ at any given time. This means there is always a free space within the domain, allowing for the population  to undergo its correction dynamics.  
		\end{enumerate}
	\end{remark}

	With the  equivalence \eqref{wf0} in consideration, we propose solving the evolution problem \eqref{Evolrho2} using an Euler implicit scheme. For a given time-step $\tau>0,$  the implicit time discretization associated with \eqref{wf0} is given by:
	\begin{equation} \label{wf1disc}
		p_{k+1}^\tau \in \K \Big( f_k^\tau  -\frac{\rho_{k+1}+\rho_{k}}{\tau} \Big),  \quad    0\leq \rho_{k+1}^\tau\leq 1,\: 0\leq p_{k+1}^\tau,\: p_{k+1}^\tau(1-\rho_{k+1})=0, 
	\end{equation} 
	for $k=0,... m-1,$ where $m:=[T/\tau]$ and  $f_k^\tau$ is given by 
	\begin{equation}
		f_k^\tau = \frac{1}{\tau}\int_{k \tau}^{(k+1) \tau} f(t)\: dt, \hbox{ for any }k=0,..n-1. 
	\end{equation}
	Here, we know the value of  $\rho_{k}^\tau$, but we need to find the values of $p_{k+1}^\tau$ and $\rho_{k+1}^\tau$. Thanks to the definition of $\K,$, we also have
	\begin{equation}\label{wf0disc}
		p_{k+1} ^\tau\in \K \Big(\rho_{k}^\tau +\tau f_k^\tau-\rho_{k+1} \Big), \quad    0\leq \rho_{k+1}^\tau\leq 1,\: 0\leq p_{k+1}^\tau , \: p_{k+1}^\tau(1-\rho_{k+1}^\tau)=0 . 
	\end{equation} 
	That is $p_{k+1} ^\tau \in \Lipu$ and 
	\begin{equation}\label{eqpk+1}
		\langle \rho_{k}^\tau +\tau f_k^\tau-\rho_{k+1} , p_{k+1}^\tau -\psi \rangle\ge 0,  \; \forall \psi \in \Lipu ,  
	\end{equation}
	with  $$  0\leq \rho_{k+1}^\tau\leq 1,\: 0\leq p_{k+1}^\tau,  \:   p_{k+1}^\tau(1-\rho_{k+1}^\tau)=0.$$

	The following lemma provides a means to determine $\rho_{k+1}$ and $p_{k+1}$, and  to implement the Euler implicit scheme \eqref{wf1disc}.

	\begin{lemma}\label{Lemduality}
		Given a time step $\tau>0$, for any $k=1,... m,$ the sequences $\rho_{k}^{\tau}$ and  $p_k^{\tau}$ in \eqref{wf0disc} are well defined   inductively by 
		\begin{equation}\label{jkoscheme}
			\begin{array}{c}\rho_{k}^\tau \in \argmin_{u } \Big\{W_1(\rho_{k-1}^\tau +\tau f_{k-1}^\tau-u)  \; : \; \langle \rho_{k-1}^\tau +\tau f_{k-1}^\tau-u \rangle=0,\: \\ \\ u\in L^\infty(\Omega),\: 0\leq u\leq 1  \Big\},
		\end{array} 		\end{equation} 
		and 
		\begin{equation}\label{jkoschemep}
			p_{k}^\tau \in  \argmax_{\theta}    \Big\{   \langle \rho_{k-1}^\tau +\tau f_{k-1}^\tau, \theta \rangle  -\int\theta   \: :\: 0\leq \theta\in \Lipu \Big\} 
		\end{equation}  
		by setting $\rho_{0}^\tau=\rho_{01}.$ 
		Moreover, we have 
		\begin{equation}
			W_1(\rho_{k-1}^\tau +\tau f_{k-1}^\tau-\rho_{k}^\tau ) =  \langle \rho_{k-1}^\tau +\tau f_{k-1}^\tau, 	p_{k}^\tau \rangle  -\int	p_{k}^{\tau}  
		\end{equation}
	\end{lemma}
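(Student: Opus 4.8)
The plan is to recognize \eqref{jkoscheme}--\eqref{jkoschemep} as a primal--dual pair and to reduce the whole statement to a no-gap (strong duality) property between them. Fix $k$ and, using the inductive hypothesis, set $h:=\rho_{k-1}^\tau+\tau f_{k-1}^\tau\in\Lip'$; by \eqref{massfudef} and the standing assumption \eqref{sufcond} its mass satisfies $0\le\langle h\rangle<\vert\Om\vert$. I would read \eqref{jkoscheme} as the primal problem
\begin{equation*}
(P)\qquad \min\Big\{W_1(h-u)\ :\ u\in L^\infty(\Om),\ 0\le u\le 1,\ \langle h-u\rangle=0\Big\},
\end{equation*}
and \eqref{jkoschemep} as the dual problem
\begin{equation*}
(D)\qquad \max\Big\{\langle h,\theta\rangle-\int_\Om\theta\ :\ 0\le\theta\in\Lipu\Big\}.
\end{equation*}
First I would settle attainment. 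For $(P)$ the admissible set is convex, weak-$*$ compact in $L^\infty(\Om)$ (the box and the affine mass constraint are weak-$*$ closed) and nonempty, since the constant $\langle h\rangle/\vert\Om\vert\in[0,1)$ is admissible; as $u\mapsto W_1(h-u)=\sup_{\theta\in\Lipu}\langle h-u,\theta\rangle$ is a supremum of weak-$*$ continuous affine maps, it is weak-$*$ lower semicontinuous, and the direct method yields a minimizer $\rho_k^\tau$. For $(D)$ the objective changes by $c\,(\langle h\rangle-\vert\Om\vert)$ under adding a constant $c$; here the strict inequality $\langle h\rangle<\vert\Om\vert$ in \eqref{sufcond} is decisive, as it makes the functional strictly decrease for large positive $c$, so a maximizer may be sought with $\min_{\overline\Om}\theta=0$. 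With this normalization, Arzel\`a--Ascoli makes $\{0\le\theta\in\Lipu:\min_{\overline\Om}\theta=0\}$ compact in $C(\overline\Om)$ and the linear objective continuous, giving a maximizer $p_k^\tau$.

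Next comes weak duality, which already encodes the optimality structure. For every $u$ admissible in $(P)$ and every $0\le\theta\in\Lipu$,
\begin{equation*}
W_1(h-u)\ \ge\ \langle h-u,\theta\rangle\ =\ \langle h,\theta\rangle-\int_\Om u\,\theta\ \ge\ \langle h,\theta\rangle-\int_\Om\theta,
\end{equation*}
the last step using $0\le u\le1$ and $\theta\ge0$. Hence $\min(P)\ge\max(D)$, and equality at the pair $(\rho_k^\tau,p_k^\tau)$ forces both inequalities to be tight: the first gives $W_1(h-\rho_k^\tau)=\langle h-\rho_k^\tau,p_k^\tau\rangle$, i.e. $p_k^\tau$ is a Kantorovich potential of $h-\rho_k^\tau$, which is the variational inequality \eqref{eqpk+1} characterizing $p_k^\tau\in\K(h-\rho_k^\tau)$; the second gives $\int_\Om p_k^\tau(1-\rho_k^\tau)=0$, whence the complementarity $p_k^\tau(1-\rho_k^\tau)=0$ a.e. since both factors are nonnegative. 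Together with $0\le\rho_k^\tau\le1$ and $p_k^\tau\ge0$ these are exactly the relations in \eqref{wf0disc}, and the common value is the asserted identity $W_1(h-\rho_k^\tau)=\langle h,p_k^\tau\rangle-\int_\Om p_k^\tau$.

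The heart of the proof, and the step I expect to be the main obstacle, is therefore the strong duality $\min(P)=\max(D)$ in this non-reflexive setting. I would obtain it from Fenchel--Rockafellar / Sion's minimax theorem applied to the bilinear map $(u,\theta)\mapsto\langle h-u,\theta\rangle$ over the convex sets above: the admissible constant $\langle h\rangle/\vert\Om\vert\in(0,1)$ furnishes the Slater-type constraint qualification (strict feasibility), while the compactness gained in the attainment step legitimizes exchanging $\min_u$ and $\sup_\theta$. The only genuinely delicate points are that $W_1$ is merely a seminorm, finite on $\Lip'_0$, and that the optimal flow need not be $L^1$; these are handled exactly as in the duality theory recalled around \eqref{dualw1} and in \cite{AgCaIg,IgUr,DJ,IgEquiv,IgEvol}, so nothing new arises beyond bookkeeping the boundary contribution hidden in $\langle h\rangle$. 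Finally, running this over $k=1,\dots,m$ with $\rho_0^\tau=\rho_{01}$ defines the sequences inductively (any selection from the argmin/argmax will do, so uniqueness is not needed), and since each step returns $\int_\Om\rho_k^\tau=\langle h\rangle<\vert\Om\vert$ by \eqref{sufcond}, it keeps every subsequent problem feasible, which closes the induction.
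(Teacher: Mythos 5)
Your proposal is correct and takes essentially the same route as the paper: the paper likewise obtains the minimizer in \eqref{jkoscheme} from lower semicontinuity of $W_1$ for the weak topology and identifies \eqref{jkoschemep} as the dual problem by exchanging $\min_u$ and $\max_\theta$ via the Von Neumann--Fan minimax theorem (as in Proposition 4.5 of \cite{EIJ}), which is precisely your Fenchel--Rockafellar/Sion step with the Slater point $\langle h\rangle/\vert\Omega\vert\in[0,1)$. The only differences are expository: you spell out dual attainment under \eqref{sufcond} (normalization $\min\theta=0$ plus Arzel\`a--Ascoli) and extract the relations of \eqref{wf0disc} from tightness in the weak-duality chain (Kantorovich potential property plus complementary slackness $\int_\Omega p_k^\tau(1-\rho_k^\tau)=0$), steps the paper leaves implicit in the assertion that the couple $(\rho_k,p_k)$ solves \eqref{wf0disc}.
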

	\begin{proof}
		Existence for  \eqref{jkoscheme} follows from the fact that $W_1$ is lsc for the weak $L^p$-topology, for any $1\leq p\leq \infty.$  As to \eqref{jkoschemep}, it follows by duality.  Indeed, using   the definition of $f,$ we have   
		\begin{equation}
			W_1(\rho_{k-1}^\tau +\tau f_{k-1}^\tau-u) =  \max_{0\leq \theta\in \Lipu}   \langle \rho_{k-1}^\tau +\tau f_{k-1}^\tau-u, \theta \rangle, 
		\end{equation}
		so that 
		\begin{equation}
			\begin{array}{c} 
				I_{k-1} := 	\min_{u \in  L^{\infty}(\Omega)} \Big\{W_1(\rho_{k-1}^\tau +\tau f_{k-1}^\tau-u)  \; : \; \langle \rho_{k-1}^\tau +\tau f_{k-1}^\tau-u \rangle=0,\: 0\leq u\leq 1  \Big\} \\  \\ 
				=  \min_{u \in  L^{\infty}(\Omega)}  \max_{0\leq \theta\in \Lipu} \Big\{   \langle \rho_{k-1}^\tau +\tau f_{k-1}^\tau-u, \theta \rangle  \; : \; \langle \rho_{k-1}^\tau +\tau f_{k-1}^\tau-u \rangle=0,\: 0\leq u\leq 1  \Big\} . 
		\end{array}	\end{equation} 
		
		We can use the Von Neumann-Fan minimax theorem (see for instance ~\cite{Borwein&Zhuang}), to prove this, just like we did for the proof of Proposition 4.5 in the work \cite{EIJ}, we get
		\begin{equation}
			\begin{array}{c}  	I_{k-1} = \max_{0\leq\theta\in \Lipu}    \min_{u \in  L^{\infty}(\Omega)}  \Big\{   \langle \rho_{k-1}^\tau +\tau f_{k-1}^\tau-u, \theta \rangle  \; : \; \langle \rho_{k-1}^\tau +\tau f_{k-1}^\tau-u \rangle=0,\: 0\leq u\leq 1  \Big\} \\  \\ 
				=  \max_{0\leq \theta\in \Lipu}    \Big\{   \langle \rho_{k-1}^\tau +\tau f_{k-1}^\tau, \theta \rangle  -\int\theta   \Big\}. 
		\end{array}	\end{equation} 
		Moreover,   taking $\rho_{k}$ and $p_{k}$ given by \eqref{jkoscheme} and \eqref{jkoschemep},  respectively, one sees the couple $(\rho_{k},p_{k})$ solves the equation \eqref{wf0disc}. 
	\end{proof}

	%
	Clearly, the algorithm \eqref{wf0disc}, for $ k=0,...m-1$, follows the implicit Euler scheme à la Jordan-Kinderlehrer-Otto (cf.  \cite{JKO})   to construct weak solutions, but using $W_1$ instead of the more familiar $ 2-$ Wasserstein distance, $W_2,$ and incorporating the source explicitly in the scheme with $1-$Wasserstein.

	\medskip 
	
	To prove the convergence of the implicit Euler scheme with the solution given in Theorem \ref{thrho2}, we first establish some preliminary results. 
	We define two curves corresponding to piecewise constant and linear interpolation:
	\begin{equation}\label{interpol}
		\rho^\tau (t):=\rho_{k+1}^\tau \quad	  \hbox{ and }\quad  \tilde\rho^\tau(t):=\rho_{k}^{\tau}+ \frac{t-k\tau}{\tau} (\rho_{k+1}^\tau-\rho_{k}^\tau), \quad   \hbox{ for any }t\in (k\tau, (k+1)\tau], 
	\end{equation}
	for  $k=0, \cdots, m$. We also define the piecewise constant approximation of  $f$:
	\begin{equation}\label{interpolf}
		f^\tau(t):=f_{k}^\tau, \; t\in (k\tau, (k+1)\tau], \; k=0, ...  m,  
	\end{equation}
	and the piecewise constant approximation   \begin{equation}\label{interpolp}
		p^\tau(t):= p_{k+1}^\tau, \; t\in (k\tau, (k+1)\tau].
	\end{equation} 
	Thanks to \eqref{sufcond}, we see that   
	\begin{equation}\label{mass0} 
		\langle \rho_{k}^\tau \rangle  =\M(k\: \tau) \leq \overline \M,\quad \hbox{ for any }k=0,...m.  
	\end{equation}
	Indeed,  by construction 
	\begin{equation} \label{massk}
		\begin{array}{ll }	\langle \rho_{k}^\tau \rangle &=  \langle \rho_{k-1}^\tau +\tau\: f_{k-1}^\tau \rangle \\    \\  
			& = \int_\Omega \rho_0(x)\: dx  +  \tau\: \sum_{i=0}^{k }  \langle f_i^\tau  \rangle   
			\\   \\  
			& =\int_\Omega \rho_0(x)\: dx  + \int_0^{k\: \tau }  \langle f(t)  \rangle\: dt  =\M(k\: \tau).   
		\end{array} 
	\end{equation}

	The goal now is to get to the limit in \eqref{wf0disc}, as $\tau \to 0,$ this is the main objective of the sequence of the following lemmas. 
	
	\begin{lemma}\label{lptaubounded}
		$p^\tau$ is bounded in $L^\infty(0,T; \Lipu).$
	\end{lemma}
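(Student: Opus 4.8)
The plan is to exploit the complementary slackness condition $p_{k+1}^\tau(1-\rho_{k+1}^\tau)=0$ from \eqref{wf0disc} together with the strict mass bound \eqref{sufcond}, which guarantees the persistence of a fixed amount of free space in $\Omega$. Since each $p_{k+1}^\tau$ belongs to $\Lipu$, its gradient is automatically controlled, $\vert \nabla p_{k+1}^\tau\vert \le 1$ a.e.; the only thing requiring proof is a uniform sup-norm bound on $p_{k+1}^\tau$, independent of $k$ and $\tau$. Once that is in hand the conclusion is immediate.

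First I would record the mass identity. By \eqref{mass0}--\eqref{massk} we have $\langle \rho_{k+1}^\tau\rangle = \M((k+1)\tau)\le \overline \M$, so that
\begin{equation}
\int_\Omega (1-\rho_{k+1}^\tau)\,dx = \vert\Omega\vert - \M((k+1)\tau)\ge \vert\Omega\vert-\overline\M =: \delta >0,
\end{equation}
where $\delta>0$ is guaranteed precisely by the strict inequality in \eqref{sufcond}. Since $0\le 1-\rho_{k+1}^\tau\le 1$ and $1-\rho_{k+1}^\tau$ vanishes wherever $\rho_{k+1}^\tau=1$, the set $A_{k+1}^\tau:=\{x\in\Omega:\rho_{k+1}^\tau(x)<1\}$ satisfies $\vert A_{k+1}^\tau\vert \ge \int_\Omega(1-\rho_{k+1}^\tau)\,dx\ge \delta$, i.e. it has measure bounded below by $\delta$ uniformly in $k$ and $\tau$.

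Next, on $A_{k+1}^\tau$ one has $1-\rho_{k+1}^\tau>0$, so the complementary slackness condition forces $p_{k+1}^\tau=0$ a.e. on $A_{k+1}^\tau$. As $p_{k+1}^\tau\in\Lipu$ it is continuous, so the set $\{p_{k+1}^\tau=0\}$ is closed; since it contains $A_{k+1}^\tau$ up to a null set it has measure at least $\delta$, and is therefore non-empty. Choosing any $y$ in it, for every $x\in\Omega$ we get
\begin{equation}
0\le p_{k+1}^\tau(x)=p_{k+1}^\tau(x)-p_{k+1}^\tau(y)\le \vert x-y\vert \le \mathrm{diam}(\Omega),
\end{equation}
whence $\Vert p_{k+1}^\tau\Vert_\infty\le \mathrm{diam}(\Omega)$ for all $k$ and $\tau$.

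Combining this uniform sup-norm bound with the built-in gradient bound $\vert\nabla p_{k+1}^\tau\vert\le 1$ yields $\Vert p_{k+1}^\tau\Vert_{W^{1,\infty}(\Omega)}\le \mathrm{diam}(\Omega)+1$ uniformly; since $p^\tau(t)=p_{k+1}^\tau$ on $(k\tau,(k+1)\tau]$ takes values in $\Lipu$, this is exactly the asserted bound in $L^\infty(0,T;\Lipu)$. The one genuinely delicate point is the persistence of free space: if $\M(t)$ were allowed to approach $\vert\Omega\vert$, the set $A_{k+1}^\tau$ could degenerate and the vanishing point $y$ be lost, so the argument hinges entirely on $\delta>0$ from the strict inequality in \eqref{sufcond}. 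I expect this to be the main conceptual obstacle, the remaining estimates being routine.
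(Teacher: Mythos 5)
Your proof is correct, but it takes a genuinely different and more elementary route than the paper's. The paper controls the average $\oint_\Omega p^\tau(t)$ dynamically: it tests the discrete variational inequality \eqref{eqpk+1} with $\psi$, uses the complementarity in the integrated form $\rho_{k+1}^\tau\, p_{k+1}^\tau = p_{k+1}^\tau$ together with the mass identity \eqref{mass0}, and arrives at the estimate \eqref{averagestt}, whose right-hand side is then controlled using the previously established $L^\infty(0,T)$ bounds on $W_1(\widehat f)$ and $\langle f \rangle$. You instead use the complementarity pointwise and statically: the strict mass gap $\vert\Omega\vert-\overline\M=\delta>0$ from \eqref{sufcond} forces the free region $\{\rho_{k+1}^\tau<1\}$ to have measure at least $\delta$, on which $p_{k+1}^\tau$ vanishes a.e., and continuity of the $1$-Lipschitz function $p_{k+1}^\tau$ then yields a genuine zero, whence $\Vert p_{k+1}^\tau\Vert_\infty \le \mathrm{diam}(\Omega)$. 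Your argument never invokes the scheme inequality \eqref{eqpk+1} nor any property of $f$, and your bound is both cleaner and quantitatively stronger: it does not blow up as $\overline\M \uparrow \vert\Omega\vert$ (only nonemptiness of the free region is used, not the size of $\delta$ beyond positivity), whereas the paper's bound carries the factor $(\vert\Omega\vert-\overline\M)^{-1}$. What the paper's integrated route buys is robustness: it survives in settings where the complementarity condition is only available in averaged/weak form or where $p$ lacks a continuous representative, neither of which is an issue here since $p_{k+1}^\tau\in\Lipu$ by construction. One cosmetic caveat in your write-up: for a non-convex $\Omega$, membership in $\Lipu$ (i.e.\ $\vert\nabla p\vert\le 1$) gives $1$-Lipschitz continuity with respect to the \emph{geodesic} distance in $\Omega$, so $\mathrm{diam}(\Omega)$ should be read as the geodesic diameter, which is finite since $\Omega$ is bounded and regular; this does not affect the conclusion.
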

	\begin{proof}	By definition of $p^\tau,$ we know that 
		\begin{equation}\label{gradpestimate}
			\Vert  \nabla p^\tau \Vert_{L^{\infty}((0,T)\times \Omega)} \le 1,\quad \hbox{ a.e. in }Q.
		\end{equation} This implies that, for any $t\in [0,T),$   $p^\tau(t) -\oint_\Omega  p^\tau(t) $  is bounded in $\Lipu.$ On the other hand,  using  \eqref{eqpk+1}, we have  
		$$\langle \rho_{k+1}^\tau - \rho_{k}^\tau ,p_{k+1}^\tau \rangle \leq \tau\:  \langle f_k^\tau ,p_{k+1}^\tau \rangle  .$$
		Combining this with \eqref{estfz}, we get  
		$$  \int_\Omega (\rho_{k+1}^\tau-\rho_{k}^\tau)\: p_{k+1}^\tau   \leq \tau\:  W_1(  \widehat   { f_k^\tau})  + \tau\: 	\langle f_k^\tau\rangle \: \oint_\Omega  p_{k+1}^\tau.  $$
		Using the fact that  $\rho_{k+1}^\tau \: p_{k+1}^\tau = p_{k+1}^{\tau } ,$ we get  
		$$ \langle p_{k+1}^\tau \rangle \leq    \tau\:   W_1(  \widehat   { f_k^\tau})  + \tau\: 	\langle f_k^\tau\rangle  \: \oint_\Omega  p_{k+1}^\tau   + \int_\Omega  \rho_{k}^\tau\: p_{k+1}^\tau .    $$ 
		This implies that 
		$$ \langle p_{k+1}^\tau \rangle   \leq    \tau\:   W_1(  \widehat   { f_k^\tau}) + \underbrace{( \langle  \rho_{k}^\tau \rangle +  \tau\: 	\langle f_k^\tau\rangle)}_{\M((k+1)\tau)} \: \oint_\Omega  p_{k+1}^\tau  +  \int_\Omega  \rho_{k}^\tau\:( p_{k+1}^\tau - \oint_\Omega  p_{k+1}^\tau     )  ,$$
		and then 
		$$ \oint_\Omega  p_{k+1}^\tau   \:  (\vert \Omega\vert - \M((k+1)\tau)) \leq    \tau\:   W_1(  \widehat   {  f_k^\tau} )  +  \int_\Omega  \rho_{k}^\tau\:( p_{k+1}^\tau - \oint_\Omega  p_{k+1}^\tau     )  . $$
		Remember that   $ \M((k+1)\tau)  \leq \overline \M<\vert \Omega\vert .$   So
		$$  \oint_\Omega  p_{k+1}^\tau   \:    \leq    \frac{ \tau\:  W_1(  \widehat   {  f_k^\tau} ) +  \int_\Omega \vert  p_{k+1}^\tau -   \oint_\Omega p_{k+1}^\tau     \vert  }{ \vert \Omega\vert -  \overline \M } ,$$ 
		and then 
		\begin{equation}\label{averagestt}
			\oint_\Omega   p^\tau(t)     \:    \leq    \frac{ \tau\:   W_1(  \widehat   {  f^\tau} (t)) +  \int_\Omega \vert  p^\tau(t) -  \oint_\Omega    p^\tau(t)     \vert  }{ \vert \Omega\vert -  \overline \M  } ,\quad \hbox{ for any }t\in (k\tau, (k+1)\tau].  
		\end{equation}
		Using moreover, the fact that $p^\tau(t)- \oint_\Omega  p^\tau(t) \: dx$ and $ W_1(  \widehat   {f^\tau}(t))$ are bounded in $L^\infty(0,T),$ we deduce the  result of the lemma.    
	\end{proof}
	
	\begin{lemma}
		There exist a vanishing sequence of time steps, $\tau_{n}\to 0$ as $n\to \infty$,  $0\leq \rho\in L^\infty(Q)$ and $0\leq p\in L^\infty(Q) ,$  such that  $p (t)\in \Lipu$    for a.e. $t\in (0,T)$ and  
		setting $ \rho_{n }:=  \rho^{\tau_n}$, $\tilde\rho_{n}:=\tilde\rho^{\tau_n}$  and $p_n:=p^{\tau_n},$  we have :
		\begin{equation}
			\rho_{n} \to \rho, \quad \mbox{ in  }L^\infty(Q)-\hbox{weak}^*  \label{cvgce1}, 
		\end{equation}
		\begin{equation} \tilde \rho_{n}  \to \rho, \quad  \mbox{ in  }L^\infty(Q)-\hbox{weak}^*  \label{cvgce2},\end{equation} 
		and 	\begin{equation} p_n \to p, \quad  \mbox{ in }L^q(0,T;W^{1,q}(\Omega)-\hbox{weak}, \: \forall q\in (1, +\infty).  \label{cvgce3} 
		\end{equation}
		Moreover, we have $ 0\leq \rho\leq 1$ and $p(1-\rho )=0$ a.e. in $Q$.
	\end{lemma}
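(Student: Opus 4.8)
The plan is to pass to the limit in the scheme \eqref{wf0disc} by soft weak-compactness arguments, the two genuinely delicate points being the identification $\tilde\rho=\rho$ and the complementarity $p(1-\rho)=0$; both rest on a single quantitative estimate on the discrete increments measured in the dual norm of $\Lip$.

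\emph{Extraction of the limits.} Since $0\le\rho_{k+1}^\tau\le1$, the interpolants $\rho^\tau$ and $\tilde\rho^\tau$ are bounded by $1$ in $L^\infty(Q)$, so up to a first subsequence $\rho^\tau\rightharpoonup\rho$ and $\tilde\rho^\tau\rightharpoonup\tilde\rho$ weakly-$*$ in $L^\infty(Q)$, with $0\le\rho,\tilde\rho\le1$ a.e. because the set $\{0\le\cdot\le1\}$ is convex and weakly-$*$ closed. By Lemma \ref{lptaubounded} the sequences $p^\tau$ and $\nabla p^\tau$ are bounded in $L^\infty(Q)$ and $L^\infty(Q)^N$; extracting weak-$*$ limits and identifying the gradient in $\D'(Q)$ produces $0\le p\in L^\infty(Q)$ with $\vert\nabla p\vert\le1$ a.e., hence $p(t)\in\Lipu$ for a.e.\ $t$, and since weak-$*$ convergence in $L^\infty(Q)$ entails weak convergence in every $L^q(Q)$ this gives \eqref{cvgce3} for all $q\in(1,\infty)$ along a common refined subsequence $\tau_n$.

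\emph{The key increment estimate.} I would prove that $\Vert \rho_{k+1}^\tau-\rho_k^\tau\Vert_{\Lip'}\le C\tau$. Writing $\rho_{k+1}^\tau-\rho_k^\tau=(\rho_{k+1}^\tau-\rho_k^\tau-\tau f_k^\tau)+\tau f_k^\tau$, the first summand is balanced by \eqref{massk}, so its dual norm equals $W_1(\rho_k^\tau+\tau f_k^\tau-\rho_{k+1}^\tau)$, i.e.\ the optimal value of the projection \eqref{jkoscheme} defining $\rho_{k+1}^\tau$. Bounding this value by the cost of a competitor obtained from $\rho_k^\tau$ by redistributing the $O(\tau)$ mass $\tau\langle f_k^\tau\rangle$ inside the set $\{\rho_k^\tau<1\}$ — which has measure at least $\vert\Omega\vert-\overline{\M}>0$ by \eqref{sufcond} — together with the $L^\infty(0,T)$ bounds on $\langle f(t)\rangle$ and $W_1(\widehat f(t))$ established above, yields $O(\tau)$; the same $f$-bounds control $\tau\Vert f_k^\tau\Vert_{\Lip'}$. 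This is the technical heart of the argument, and the main obstacle: the subtlety is that $f_k^\tau$ merely belongs to $\Lip'$, so the competitor must be built at the level of the dual norm rather than pointwise. The estimate yields at once $\Vert\rho^\tau-\tilde\rho^\tau\Vert_{L^1(0,T;\Lip')}\le\frac{\tau}{2}\sum_k\Vert\rho_{k+1}^\tau-\rho_k^\tau\Vert_{\Lip'}\le CT\tau\to0$ and a uniform bound for $\partial_t\tilde\rho^\tau$ in $L^\infty(0,T;\Lip')$.

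\emph{Identification and complementarity.} Since $\tilde\rho^\tau$ is bounded in $L^\infty(0,T;L^2(\Omega))$ with $\partial_t\tilde\rho^\tau$ bounded in $L^\infty(0,T;\Lip')$, and the embedding $L^2(\Omega)\hookrightarrow\Lip'$ is compact, the Aubin--Lions--Simon lemma gives $\tilde\rho_n\to\rho$ strongly in $C([0,T];\Lip')$; combined with $\rho^\tau-\tilde\rho^\tau\to0$ in $L^1(0,T;\Lip')$ this forces $\tilde\rho=\rho$, establishes \eqref{cvgce1}--\eqref{cvgce2}, and in particular gives $\rho_n\to\rho$ strongly in $L^1(0,T;\Lip')$. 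Finally I would pass to the limit in the discrete complementarity $\int_Q p^\tau(1-\rho^\tau)=0$, which holds since $p_{k+1}^\tau(1-\rho_{k+1}^\tau)=0$ in \eqref{wf0disc}: testing the weak $L^q$ convergence of $p^\tau$ against $1$ gives $\int_Q p^\tau\to\int_Q p$, while the splitting $\int_Q p^\tau\rho^\tau=\int_Q p^\tau(\rho^\tau-\rho)+\int_Q(p^\tau-p)\rho+\int_Q p\rho$ converges to $\int_Q p\rho$ — the first term by pairing the strongly $\Lip'$-convergent $\rho^\tau-\rho$ with the $L^\infty(0,T;\Lip)$-bounded $p^\tau$, the second by the weak $L^q$ convergence of $p^\tau$ against $\rho\in L^{q'}(Q)$. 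Hence $\int_Q p(1-\rho)=0$, and since $p\ge0$ and $1-\rho\ge0$ a.e.\ we conclude $p(1-\rho)=0$ a.e.\ in $Q$, completing the proof.
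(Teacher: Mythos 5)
Your proposal is correct and shares the paper's skeleton — weak-$*$ extraction of $\rho$, $\tilde\rho$, $p$; an $O(\tau)$ estimate on the increments measured in the dual norm of $\Lip$, obtained by inserting a competitor into the minimization \eqref{jkoscheme}; identification $\tilde\rho=\rho$; and a strong-times-weak passage to the limit in the complementarity — but it implements the two key steps differently. For the increment estimate, the paper simply shifts by a constant, taking $u=\rho_k^\tau+\frac{\tau}{\vert\Omega\vert}\langle f_k^\tau\rangle$ as competitor, which yields exactly $W_1(\rho_{k+1}^\tau-\rho_k^\tau-\tau f_k^\tau)\le \tau\, W_1(\widehat{f_k^\tau})$ without invoking \eqref{sufcond} or the set $\{\rho_k^\tau<1\}$ at this stage; your redistribution competitor works too, but note the sign: here $\langle f_k^\tau\rangle=-\int_{\partial\Omega}(\rho_1 V)\cdot\nu\le 0$, so mass must be \emph{removed} from $\{\rho_k^\tau>0\}$ rather than added where there is room — in fairness, the paper's constant shift has the mirror-image admissibility issue ($u\ge 0$ where $\rho_k^\tau=0$), so both arguments are at the same level of rigor on this point, which you honestly flag. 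You also upgrade the paper's $L^1(0,T;(\Lip',\wW))$ bound \eqref{estim2} on $\partial_t\tilde\rho^\tau$ to $L^\infty(0,T;\Lip')$, which is legitimately available since $W_1(\widehat f)$ and $\langle f\rangle$ lie in $L^\infty(0,T)$, and this stronger bound is what licenses your use of Aubin--Lions--Simon (Simon's theorem needs $\partial_t$ in $L^r$ with $r>1$ for compactness in $C([0,T];\Lip')$; the paper's $L^1$ bound alone would not suffice for that route). For the complementarity, the paper cites a nonlinear compensated-compactness lemma (Andreianov--Cancès--Moussa, Moussa) to get $p_n\tilde\rho_n\rightharpoonup p\rho$ weakly-$*$ and then kills $\int p_n(1-\tilde\rho_n)$ via the estimate $\int\!\!\int p_n(\rho_n-\tilde\rho_n)\le \tau_n\int\!\!\int p_n\,\partial_t\tilde\rho_n\le C\tau_n$; you instead unpack that black box — strong compactness of $\tilde\rho_n$ in $C([0,T];\Lip')$ via the compact embedding $L^2(\Omega)\hookrightarrow\Lip'$ (dual, by Schauder, to $\Lip\hookrightarrow\hookrightarrow C(\overline\Omega)$), transferred to $\rho_n$ through $\Vert\rho^\tau-\tilde\rho^\tau\Vert_{L^1(0,T;\Lip')}=O(\tau)$ — and then conclude by a direct trilinear splitting of $\int_Q p^\tau\rho^\tau$, pairing the strongly $\Lip'$-convergent factor against the $L^\infty(0,T;\Lipu)$-bounded $p^\tau$ from Lemma \ref{lptaubounded}. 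What your route buys is self-containedness (no external compactness lemma) and the ability to work with the piecewise-constant interpolant directly in $\int_Q p^\tau(1-\rho^\tau)=0$, bypassing the paper's extra comparison between $\rho_n$ and $\tilde\rho_n$ inside the product; what the paper's route buys is economy, needing only the weaker $L^1$-in-time control on $\partial_t\tilde\rho^\tau$.
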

	\begin{proof} Clearly \eqref{cvgce1} follows by Lemma \ref{lptaubounded}. We  give here the main estimates concerning  $ \rho^\tau$ and $\tilde\rho^\tau.$ 
		Using $\rho_{k}^\tau  +\frac{\tau}{\vert \Omega\vert}\langle f_k^\tau \rangle$ as a competitor to $\rho_{k+1}^\tau$ in \eqref{jkoscheme}, we see first :  
		\begin{equation}\label {neweq1}
			\begin{split}
				W_1(\rho_{k+1}^\tau-(\rho_{k}^\tau+ \tau f_k^\tau))  &\le  	W_1(\tau\: \widehat   {f_k^\tau}  )  \\  \\  &\leq   \int_{k\tau}^{(k+1)\tau} W_1(\widehat   { f}(t) )  \mbox{d}t 
			\end{split}
		\end{equation}
		Hence,  
		\begin{equation}\label{estim01}
			\sum_{k=0}^{m-1} W_1(\rho_{k+1}^\tau-\rho_{k}^\tau -\tau f_k^\tau) \le \int_{0}^{T}  W_1(\widehat   { f}(t) )  \mbox{d}t .
		\end{equation}
		Next, by definition of $\tilde \rho^\tau,$  we observe that,    
		\begin{eqnarray*}
			\int_{k\tau}^{(k+1)\tau} W_1(\widehat{\partial_t \tilde\rho^\tau(t)}) &=& \tau\: 	 W_1(\widehat{\partial_t \tilde\rho^\tau(t)}) \\  \\  &=&   W_1(\widehat{\rho_{k+1}^\tau-\rho_{k}^\tau}) \\  \\ 
			&\leq& W_1 (\rho_{k+1}^\tau-\rho_{k}^\tau  - \tau f_k^\tau)+\tau   W_1(\widehat   {f_k^\tau}), 
		\end{eqnarray*}
		so that 
		\begin{equation}
			\int_0^T W_1(\widehat{\partial_t \tilde\rho^\tau(t)}) \: dt  \leq \sum_{k=0}^{m-1}  W_1 (\rho_{k+1}^\tau-\rho_{k}^\tau  -\tau f_k^\tau)     + \int_0^T   W_1(\widehat {f(t)})\: dt. 
		\end{equation}
		Together with \eqref{estim01} and the fact that $\langle\partial_t \tilde\rho^\tau(t) \rangle =\langle  f^\tau(t) \rangle   ,$  for any $t\in [0,T),$  we deduce
		\begin{equation}\label{estim2}
			\Vert \partial_t \tilde\rho^\tau\Vert_{L^1(0,T, (\Lip', \wW))} \le C(\Omega) \:  \int_0^T   \wW(  {f(t)})\: dt. 
		\end{equation}
		Combining this with the fact that $ \vert \tilde\rho^\tau(t)-\rho^{\tau}\vert \leq \tau\: \vert \partial_t \tilde \rho^\tau\vert ,  $ in $[k\tau,(k+1)\tau),$  we obtain
		\begin{equation}
			\Vert \rho^\tau -\tilde\rho^\tau\Vert_{L^1( 0,T , (\Lip', \wW))}  \leq   C(\Omega) \:  \tau\:   \int_0^T   \wW(  {f(t)})\: dt.   \end{equation}
		Having in  mind that  $\rho^\tau$ and $\tilde\rho^\tau$ are bounded in $L^\infty(Q)-\hbox{weak}^*, $  we  conclude \eqref{cvgce1} and \eqref{cvgce2}. 
		To prove that $p(1-\rho)=0$ a.e. in $Q,$ we  employ first the theory of weak compensated compactness (cf.  ~\cite{ACM,Moussa}) with  \eqref{estim2}, \eqref{cvgce1}  and \eqref{cvgce3} to deduce first that
		\begin{equation}\label{weakcomp0}
			p_n \tilde \rho_n  \to p\: \rho,\quad \hbox{ in }L^\infty(Q)-\hbox{weak}^* .
		\end{equation}  
		Then, clearly  
		\begin{equation}\label{weakcomp1}
			p_n (1-\tilde \rho_n)  \to p\:(1 -\rho),\quad \hbox{ in }L^\infty(Q)-\hbox{weak}^*. 	\end{equation}   
		On the other, using  the fact that $p_n (1-  \rho_n)=0,$ a.e. $Q,$ and  \eqref{estim2}, we see that 
		\begin{eqnarray*} 
			0\leq 	\int_0^T\!\!\int_\Omega p_n (1-  \tilde \rho_n) \: dtdx &=&  	\int_0^T\!\!\int_\Omega  p_n (\rho_n- \tilde \rho_n )) \: dtdx   \\   
			\\   		 &\leq  &  \tau_n \int_0^T\!\!\int_\Omega  p_n  \: \partial_t \tilde \rho_n dtdx    \\    \\   
			&\leq  &  \tau_n  \left\{ 	\int_0^T W_1( \widehat {\partial_t \tilde \rho_n(t)}   )\: dt 
			+ 	\int_0^T  	\langle \partial_t \tilde \rho_n  \rangle \: \oint_\Omega  p_n \: dt \right\}  
			\\    \\   
			&\leq  & \tau_n  \left\{ 	\int_0^T W_1( \widehat {f(t)}   )\: dt 
			+ 	\int_0^T  	\langle f(t)  \rangle \: \oint_\Omega  p_n (t)\: dt \right\}  
			\\    \\   
			&\leq  & \tau_n \: C(\Omega,N)    \quad \quad \to 0,\quad \hbox{ as }n\to \infty, \end{eqnarray*} 
		where we use, by Lemma \ref{lptaubounded},  the fact that 	$\oint_\Omega  p_n$ is bounded in $L^\infty(0,T).$  Then  letting $n\to\infty$ and using  \eqref{weakcomp1}, we deduce that $p(1-\rho)=0$ a.e. in $Q.$   
	\end{proof}

	\begin{proof}[Proof of Theorem \ref{thrho2}]
		Remember that  
		\begin{equation}\label{eulerjko0}
			p_{k+1}^\tau \in \K(\rho_{k}^\tau+\tau f_k^\tau-\rho_{k+1}^\tau)=\K \Big (f_k^\tau- \frac{\rho_{k+1}^\tau-\rho_{k}^\tau}{\tau} \Big)
		\end{equation}
		which  can be rewritten as,  for a.e. $t\in [0, T),$
		\begin{equation}\label{eulerjko}
			p^{\tau}(t) \in \K(f^{\tau}(t) - \partial_t \tilde\rho^{\tau}(t)). 
		\end{equation} 
		This implies that, for any  $\xi \in \Lipu$ and for every $\xi\in \Lipu$,  we have 
		\begin{equation}\label{eulerjkodeveloppe}
			\langle \partial_t \tilde\rho_n(t), p_n(t) -\xi \rangle \le   \langle f_n(t), p_n(t)-\xi \rangle \quad \hbox{ a.e.    }t\in [0,T).
		\end{equation}
		Using the fact that 
		$p_n  \: \partial_t \tilde \rho_n    \geq 0,$   we obtain 
		\begin{equation}\label{eulerjkodeveloppe1}
			-	\langle \partial_t \tilde\rho_n(t), \xi \rangle \le   \langle f_n(t), p_n(t)-\xi \rangle \quad \hbox{ a.e.    }t\in [0,T).
		\end{equation}
		Finally, using the convergences   \eqref{cvgce2}, and \eqref{cvgce3} and passing to the limit as $\tau\to 0$ in \eqref{eulerjkodeveloppe1}, we deduce that the pair $(\rho,p)$ satisfies \eqref{def2}. Finally, using \eqref{massk}, we see that 
		$$\frac{d}{dt} \langle \tilde \rho_n\rangle =\langle f_n\rangle , \quad \hbox{ in } \D'(0,T),$$
		so that letting  $n\to\infty,$ and using \eqref{cvgce2},  we get \eqref{massconserv}. The conservation of mass for $\rho_2,$ \eqref{massconserv2}, is directly derived from \eqref{massconserv} and the fact that $\frac{d}{dt} \langle \rho_1\rangle =\langle f\rangle,$ in $\D'(0,T).$\end{proof} 
	
	\section{Algorithm and numerical computation}			 
	\setcounter{equation}{0}
	
	\subsection{Algorithms}
	In this section, we focus on the development of a numerical algorithm to calculate the solutions to the congested pedestrian traffic flow model in crossings introduced in the previous sections.  The core of our approach lies in a prediction-correction scheme, adapted here for the case of two distinct populations. This methodology builds on the work of Maury and al.  \cite{MRS1} and Ennaji and al.  \cite{EIJ}, who previously used this approach to model congestion in crowd motion for a single population.
	
	\medskip   
	For a given time-step $\tau>0,$ and a $\tau-$ discretization  $0=t_0<t_1<.... <t_{m}=\tau [T/\tau]\leq T,$ 
	the prediction-correction algorithm divides the dynamics of the two populations, represented by densities $\rho_1$ and $\rho_2,$  into sequential steps on each interval $[t_k,t_{k+1}[:$ a \emph{prediction step} for $\rho_1$  followed by a \emph{correction step} for $\rho_2.$ 
	
	\medskip  
	Assuming that $\rho_1(t_{k})=:\rho_{1,k}$   and $\rho_2(t_{k})=:\rho_{2,k}$  are known at time $t=t_{k},$ with $\rho_{1,0} =\rho_{01}$  and   $\rho_{2,0} =\rho_{02},$ we now describe the focus of each step:

	\medskip
	\noindent \underline{\textbf{Prediction step:}} This step focuses mainly on the computation of $\rho_{1,k+1} $ transporting the population $\rho_1$ through a spontaneous velocity field. To achieve this, we utilize the transport equation   i.e.
	\begin{equation} \label{PDEtransportk}
		\left\{ 
		\begin{array}{ll} 
			\frac{d\rho_1 }{dt}  +\nabla \cdot (\rho_1\: V)=0, \quad  & \hbox{ in }[t_k,t_{k+1})\times \Omega\\  \\ 	\rho_1(t_k)=\rho_{1,k} ,    &\hbox{ in } \Omega,\end{array}\right. 
	\end{equation}
	where $V$ is derived from a potential, $V=-\nabla \varphi.$  Then, we consider  
	\begin{equation}
		\rho_{1,k+1} =  \rho_1 (t_{k+1}). 
	\end{equation} 
	We consider three primary scenarios for determining $V,$  as outlined in Remark 1:

	\begin{itemize}
		\item  Directed movement: the potential $\varphi$  is given by 
		the geodesics towards the exit ; i.e. taking  $V$ definitely given by $V=-\nabla \varphi,$ where   the potential \(\varphi\) is given by the solution of the Eikonal equation:
		\begin{equation}\label{eikonal}
			\left\{
			\begin{array}{l}
				\|\nabla \varphi\| = 1\quad \text{in} \quad \Omega, \\
				\varphi = 0 \quad \text{on} \quad \Gamma_D.
			\end{array}
			\right. 
		\end{equation}
		Recall that the solution of equation \eqref{eikonal} (in the viscosity sense) gives the fast path to the exit \(\Gamma_D\). The potential \(\varphi\) corresponds to the expected travel time to maneuver towards an exit.  
		
		\item Non-local interaction:  the  potential $\varphi$ is non-local  and is  computed by the convolution of the solution with a Gaussian kernel, taking into account the influence of neighboring individuals.  Within the time interval   $[t_k,t_{k+1}[,$ $\varphi $ is given by  
		\begin{equation}\label{convolution}
			\varphi : = \rho_1^k \star K_\sigma.
		\end{equation}
		Here $K_\sigma$  is the  Gaussian kernel, accounting for the influence of neighboring individuals. Here 
		$$K_\sigma^\mu (x)  = \frac{1}{\sigma \sqrt{2\pi}} \exp \left( -\frac{(x - \mu)^2}{2\sigma^2} \right),$$ 
		where $\sigma >0$  is the standard deviation, which measures the spread or dispersion of the distribution (larger standard deviation indicates a wider, flatter curve), and $\mu$  is the mean, or average, of the distribution (it  represents the center of the bell curve). 
		\item Diffusion-based interaction: the  potential $\varphi$ is  obtained by solving a stationary PDE with the Laplacian operator : 
		\begin{equation}
			\left\{ \begin{array}{ll}
				-\Delta \varphi = \rho_1^k \quad & \hbox{ in }\Omega\\  \\ 
				\varphi =0 & \hbox{ on }\partial \Omega.
			\end{array} \right. 
		\end{equation}
		This enables to capture  diffusion-like interactions within the population. This is analogous to considering the vector field potential from equation \eqref{convolution}, where we effectively replace $K_\sigma$ with the Green's function to model a form of Brownian interaction that respects Dirichlet the boundary conditions.
	\end{itemize}

	\medskip
	\noindent \underline{\textbf{Correction step:}} 
	Since the prediction step may result in a density  $  \rho_{1,k+1} $   that violates  the constraint; 
	i.e.  $\hbox{ess sup}_x(\rho_{1,k+1}(x) + \rho_{2,k}(x))> 1$,  we implement a correction step to adjust $\rho_{2,k}$ and ensure that the total population density remains within the permissible bounds.  While,   \eqref{Evol2} may not be ideal for the theoretical study of the dynamics, as discussed earlier, it proves to be quite effective for numerical simulations in  the correction phase, as we'll see below.   More precisely, to achieve  the correction we use  the  Euler-Implicit schema associated with the dynamic. 
	\begin{equation}\label{Evol3}
		\left\{ 
		\begin{array}{ll} 
			\frac{d\rho_2 }{dt}  +\partial {I\!\! I}_{\Lipu}(p)\ni 0, \quad  & \hbox{ in }[t_k,t_{k+1})\\  \\
			p\geq 0, \: 0\leq \rho_2\leq 1-\rho_1,\: p(\underbrace{1- \rho_{1,k+1}}_{=:\kappa_{k}}-\rho_2 ) =0 	 ,    &\hbox{ in } Q\\   	\rho_2(t_k)=\rho_{2,k}&\hbox{ in } \Omega  . \end{array}\right. 
	\end{equation} 
	This formulation has the advantage of aligning the dynamics with the framework employed in the work of ~\cite{EIJ}, which has demonstrated excellent numerical performance in the context of congestion for a single population. However, in contrast to the single-population case where the critical density is fixed at $1$, here we use a spatially dependent maximum density value,  $ \kappa_k(x)$  where $0\leq \kappa\leq 1.$  This function, assumed to be given, accounts for potential variations for the population $\rho_2$ in the maximum allowable density over space (and time).

	\bigskip 
	To compute the solution  \eqref{Evol3}, we use  the associate Euler-Implicit schema associated with \eqref{Evol3}, at the step $k.$ That is 
	\begin{equation}\label{Evol3kappak}
		\left\{ 
		\begin{array}{ll} 
			\rho_{2,k+1} +\partial {I\!\! I}_{\Lipu}(p_{k+1} )= \rho  _{2,k} ,  \\  \\
			p_{2,k+1} \geq 0, \: 0\leq \rho_{2,k+1} \leq \kappa_{k} ,\: p_{2,k+1} (\kappa_{k}  - \rho_{2,k+1} ) =0 ,\end{array}\right. 
	\end{equation} 
	with   $\rho_{2,0} =\rho_{02}$ given.

	\bigskip 
	Using  the $1$-Wasserstein distance, $W_1,$  and the concept of the Kantorovich potential as in the study of the problem \eqref{Evol2}, it is not difficult to see that, the solution of the discrete problem \eqref{Evol3kappak} is also given by 
	\begin{equation}\label{wf0disckappa}
		p_{k+1} \in \K  (\rho_{2,k} -\rho_{2,k+1}  ), \quad    0\leq \rho_{2,k+1}\leq \kappa_{k},\: 0\leq p_{k+1} , \: p_{k+1}( \kappa_{k} -\rho_{2,k+1})=0 . 
	\end{equation} 
	That is $p_{k+1} \in \Lipu$ and 
	\begin{equation}\label{eqpk+1kappa}
		\langle \rho_{2,k}  -\rho_{2,k+1} , p_{k+1} -\psi \rangle\ge 0,  \; \forall \psi \in \Lipu ,  
	\end{equation}
	with  $$  0\leq \rho_{2,k+1}\leq \kappa_{k},\: 0\leq p_{k+1},  \:   p_{k+1}(1-\rho_{2,k+1})=0.$$
	
	\bigskip
	Following the same proof of Lemma \ref{Lemduality}, we   have the following result where we replace the density maximal value   $  1,$ by  space dependent value  $  \kappa_{k}.$  
	
	\begin{lemma}\label{Lemspaceduality}
		Given a time step $\tau>0$, for any $k=1,... m,$ the sequences $\rho_{k}^{2,\tau}$ and  $p_k^{\tau}$ in \eqref{wf0disckappa} are well defined   inductively by 
		\begin{equation}\label{jkoschemekappa}
			\rho_{2,k}^\tau \in \argmin_{u } \Big\{W_1(\rho_{2,k-1}^\tau  -u)  \; : \; \langle \rho_{2,k-1}^\tau  -u \rangle=0,\: u\in L^\infty(\Omega),\: 0\leq u\leq \kappa_{k}  \Big\},
		\end{equation} 
		and 
		\begin{equation}\label{jkoschemepkappa}
			p_{k}^\tau \in  \argmax_{\theta}    \Big\{   \langle \rho_{2,k-1}^\tau  , \theta \rangle  -\int\kappa_{k} \: \theta   \: :\: 0\leq \theta\in \Lipu \Big\} 
		\end{equation}  
		by setting $\rho_{2,0}^\tau =\rho_{02}.$ 
		Moreover, we have 
		\begin{equation}\label{fluxduality}
			\begin{array}{c}
				\rho_{k}^\tau \in \argmin_{u } \inf_{\sigma \in  (L^{\infty}(\Omega)')^N }  \Big\{	\Vert \sigma\Vert_{(L^{\infty}(\Omega)')^N}    \; : \; \langle  \sigma,    \nabla \xi \rangle =\int_\Omega(\rho_{2,k-1}^\tau  -u)\: \xi\: dx, \\   \\ \quad \quad \: \forall\: \xi \in \Lipu  ,\: u\in L^\infty(\Omega),\: 0\leq u\leq \kappa_{k}  \Big\},\end{array}
		\end{equation}  
		where the duality bracket is taken in the sense of    $({(L^{\infty}(\Omega)')^N,L^{\infty}(\Omega)^N}).$
	\end{lemma}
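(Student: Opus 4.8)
The plan is to follow the proof of Lemma \ref{Lemduality} essentially verbatim, making only two substitutions: the transport source $f_{k-1}^\tau$ is set to zero (the correction step for $\rho_2$ carries no source), and the constant ceiling $1$ is replaced everywhere by the space-dependent value $\kappa_k$, with $0\le\kappa_k\le 1$. First I would record existence of a minimizer for \eqref{jkoschemekappa}. The admissible set $\{u\in L^\infty(\Omega):\ 0\le u\le\kappa_k,\ \langle\rho_{2,k-1}^\tau-u\rangle=0\}$ is convex and weakly-$*$ compact in $L^\infty(\Omega)$, and it is nonempty under the standing mass-admissibility condition $\langle\rho_{2,k-1}^\tau\rangle\le\int\kappa_k$ (the discrete counterpart of \eqref{sufcond}, guaranteeing enough free space). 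Since $W_1=\wW$ is lower semicontinuous for the weak $L^p$-topology for every $1\le p\le\infty$, the direct method delivers a minimizer $\rho_{2,k}^\tau$.

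For the dual identity \eqref{jkoschemepkappa} I would start from the Kantorovich representation restricted to nonnegative potentials, $W_1(\rho_{2,k-1}^\tau-u)=\max_{0\le\theta\in\Lipu}\langle\rho_{2,k-1}^\tau-u,\theta\rangle$, and insert it into \eqref{jkoschemekappa}. This exhibits the optimal value as a $\min_u\max_\theta$ of the bilinear functional $(u,\theta)\mapsto\langle\rho_{2,k-1}^\tau-u,\theta\rangle$, which is affine in $u$ and affine in $\theta$ over the convex admissible sets above. Exactly as for Proposition~4.5 of \cite{EIJ}, the Von Neumann--Fan minimax theorem (see \cite{Borwein&Zhuang}) applies and lets me interchange the two operations. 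The inner minimization over $u$ is then explicit: because $\theta\ge 0$, the term $-\int u\,\theta$ is minimized by saturating $u$ at its ceiling $u=\kappa_k$, so the inner value equals $\langle\rho_{2,k-1}^\tau,\theta\rangle-\int\kappa_k\,\theta$, which is precisely the functional maximized in \eqref{jkoschemepkappa}.

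Next I would read off the optimality relations that make $(\rho_{2,k}^\tau,p_k^\tau)$ a solution of the discrete inclusion \eqref{wf0disckappa}. The saddle-point condition yields $\langle\rho_{2,k-1}^\tau-\rho_{2,k}^\tau,\ p_k^\tau-\psi\rangle\ge 0$ for all $\psi\in\Lipu$, which is \eqref{eqpk+1kappa}; specializing the inner minimizer shows that $\rho_{2,k}^\tau$ saturates at $\kappa_k$ on the set $\{p_k^\tau>0\}$, giving the complementarity $p_k^\tau(\kappa_k-\rho_{2,k}^\tau)=0$ together with $0\le\rho_{2,k}^\tau\le\kappa_k$ and $0\le p_k^\tau\in\Lipu$. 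The stated equality $W_1(\rho_{2,k-1}^\tau-\rho_{2,k}^\tau)=\langle\rho_{2,k-1}^\tau,p_k^\tau\rangle-\int\kappa_k\,p_k^\tau$ then follows from equality of the primal and dual optimal values at the saddle point.

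Finally, the flux formulation \eqref{fluxduality} is obtained by substituting the Fenchel--Rockafellar (Beckmann) dual \eqref{dualw1} for $W_1$ into \eqref{jkoschemekappa}: replacing $W_1(\rho_{2,k-1}^\tau-u)$ by $\inf_\sigma\{\Vert\sigma\Vert:\ \langle\sigma,\nabla\xi\rangle=\langle\rho_{2,k-1}^\tau-u,\xi\rangle\ \forall\xi\in\Lip\}$ turns the problem into the joint minimization over $(u,\sigma)$ displayed there. The main obstacle is the minimax interchange: one must fix a topology in which the admissible $u$-set is compact while $(u,\theta)\mapsto\langle\rho_{2,k-1}^\tau-u,\theta\rangle$ remains suitably semicontinuous, and one must keep the balance constraint $\langle\rho_{2,k-1}^\tau-u\rangle=0$ active so that $W_1$ is applied to a balanced element of $\Lip_0'$ (equivalently, use the extension $\wW$). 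Once this setup is in place, every remaining step is the verbatim analogue of Lemma \ref{Lemduality}.
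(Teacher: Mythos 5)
Your proposal follows exactly the route the paper intends: it adapts the proof of Lemma \ref{Lemduality} verbatim, replacing the constant ceiling $1$ by the spatially varying $\kappa_k$ (and dropping the source term), invoking the Von Neumann--Fan minimax theorem as in Proposition~4.5 of \cite{EIJ} for \eqref{jkoschemepkappa}, and deducing \eqref{fluxduality} from the Beckmann dual formula \eqref{dualw1}. Your added remarks on nonemptiness of the admissible set and on the optimality/complementarity relations are consistent elaborations of the same argument, so the proposal is correct and essentially identical to the paper's proof.
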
 
	\begin{proof} 
		The proof follows by adapting the  techniques employed in Lemma \ref{Lemduality}. In this case, we replace the constraint $0\leq u\leq 1,$ by a spatially varying constraint of the form $0\leq u\leq \kappa_{k}.$   Subsequently, the duality result expressed in \eqref{fluxduality} follows from the fact  
		$$W_1(\rho_{2,k-1}^\tau  -u) = \inf_{\sigma \in  (L^{\infty}(\Omega)')^N }  \Big\{	\Vert \sigma\Vert_{(L^{\infty}(\Omega)')^N}    \; : \; \langle  \sigma,    \nabla \xi \rangle =\int_\Omega(\rho_{2,k-1}^\tau  -u)\: \xi\: dx,   \quad \: \forall\: \xi \in \Lipu\Big\}. $$
	\end{proof}

	\medskip  
	Coming back to the computation of $\rho_2^{k+1}$ in the correction step through the scheme \eqref{Evol3kappak}, we use Lemma \ref{Lemspaceduality} and proceed  with a minimum flow process  in \eqref{fluxduality}. More precisely, we consider \(\rho_2^{k+1}\) to be the solution to the following regularized optimization problem:
	
	\begin{equation}\label{minflowk}
		\begin{array}{c}
			\argmin_{u} \inf_{\sigma}   \Big\{	\int_\Omega \vert \sigma\vert(x) \: dx   \; : \;     \int_\Omega \sigma\cdot \nabla \xi \: dx  =\int_\Omega(\rho_{2,k-1}^\tau  -u)\: \xi\: dx, \\   \\ \quad \quad \: \forall\: \xi \in \Lipu  ,\: u\in L^\infty(\Omega),\: 0\leq u\leq \kappa_{k}  \Big\}	\end{array}
	\end{equation}
	This formulation allows to update the density \(\rho_2\) while respecting the constraint \(\rho_1 + \rho_2 \leq 1\), and at the same time to optimize the velocity field \(\sigma\) to adjust the dynamics of the system. This   vector field, \(\sigma\),  represents the flow rate used by population  \(\rho_2\) to relieve congestion.
	
	\subsection{Numerical approximation:} 
	
	As mentioned in the previous section, the approximation of the densities \(\rho_1\) and \(\rho_2\) is performed using a prediction correction strategy. The first step, called prediction, consists of solving the transport equation \eqref{PDEtransportk} using an Euler scheme for time discretization, while the term \(\text{div}(\rho_1 V_1)\) is discretized using the finite volume method. The second step, correction or projection, involves a minimum flow problem, which will be solved using a primal-dual (PD) algorithm. To begin, we will detail the discretization of the problems \eqref{PDEtransportk} and \eqref{minflowk}.
	
	\vspace{0.5cm} 
	
	\noindent \textbf{Domain Discretization:} 
	In this section, we numerically solve equations \eqref{PDEtransportk} and \eqref{eikonal} over the domain \(\Omega\). This domain represents a room surrounded by walls, denoted as \(\Gamma_N\), and featuring an exit door \(\Gamma_D\). The domain is divided into \(m \times n\) control volumes, each with a length and width of \(h\). We define \(C_{i,j}\) as the cell located at position \((i,j)\), and \(\Psi_{i,j}\) as the average value of the quantity \(\Psi\) within cell \(C_{i,j}\). At the boundaries of cell \(C_{i,j}\), \(\omega_{i+1,j}\) and \(\omega_{i,j-1}\) represent the in-flow and out-flow quantities, respectively.
	
	\medskip 
	
	\noindent \textbf{Discretization of the Operators:} The space \( X = \mathbb{R}^{m \times n} \) is equipped with a scalar product and an associated norm as follows:
	\[
	\langle u,v \rangle = h^2 \sum_{i=1}^{m} \sum_{j=1}^{n} u_{i,j} v_{i,j}, \quad \| u \|^2 = \langle u, u \rangle.
	\]
	For \( 1 \leq i \leq m \) and \( 1 \leq j \leq n \), we define the components of the discrete gradient operator via finite differences:
	\[
	(\nabla_h u)^1_{i,j} =
	\left\{\begin{array}{ll}
		\frac{u_{i+1,j} - u_{i,j}}{h} & \text{if} \quad i < m, \\
		0 & \text{if} \quad i = m.
	\end{array}\right.
	\]
	
	\[
	(\nabla_h u)^2_{i,j}  =
	\left\{\begin{array}{ll}
		\frac{u_{i,j+1} - u_{i,j}}{h} & \text{if} \quad j < n, \\
		0 & \text{if} \quad j = n.
	\end{array}\right.
	\]
	Then, the discrete gradient \( \nabla_h : X \rightarrow Y = \mathbb{R}^{m \times n \times 2} \) is given by:
	\[
	(\nabla_h u)_{i,j} = \left( (\nabla_h u)_{1,i,j}, (\nabla_h u)_{2,i,j} \right).
	\]
	Similar to the continuous setting, we define a discrete divergence operator \( \text{div}_h : Y \rightarrow X \), which is the minus of the adjoint of \( \nabla_h \), given by \( \text{div}_h = - \nabla_h^* \). That is,
	\[
	\langle -\text{div}_h \phi, u \rangle_X = \langle \phi, \nabla_h u \rangle_Y \quad \text{for any} \quad \phi = ({\phi_1}, {\phi_2}) \in Y \quad \text{and} \quad u \in X.
	\]
	It follows that the divergence with homogeneous Neuman boundary condition is explicitly given by:
	\[
	\text{div}_h \phi_{i,j} = 
	\left\{\begin{array}{ll}
		\frac{{\phi_1}_{i,j}}{h} & \text{if} \quad i = 1, \\
		\frac{{\phi_1}_{i,j} - {\phi_1}_{i-1,j}}{h} & \text{if} \quad 1 < i < m, \\
		-\frac{{\phi_1}_{m-1,j}}{h} & \text{if} \quad i = m,
	\end{array}\right.
	+ 
	\left\{\begin{array}{ll}
		\frac{{\phi_2}_{i,j}}{h} & \text{if} \quad j = 1, \\
		\frac{{\phi_2}_{i,j} - {\phi_2}_{i,j-1}}{h} & \text{if} \quad 1 < j < n, \\
		-\frac{{\phi_2}_{i,n-1}}{h} & \text{if} \quad j = n.
	\end{array}\right.
	\]
	
	\medskip  
	\noindent \textbf{Discretization of the transport equation \eqref{PDEtransport0}:} We apply a splitting method as follows. Given an end time \( T > 0 \) and a timestep \( \tau > 0 \), we decompose the interval \([0, T]\) into subintervals \([t_k, t_{k+1}]\) and \([t_{k+1}, t_{k+1}]\), with \( k = 0, \dots, n-1 \). On each interval \([t_k, t_{k+1}]\), we solve the  transport equation \eqref{PDEtransportk}  to obtain \( \rho_{1,k+1} \), where \( V = (V_x, V_y) \) is the velocity field given by \( V = -\nabla D \), and \( D \) is given by one of the scenarios we describe above. In the case where   \( D \)  is the distance to the boundary \( \Gamma_D \), we solve the Eikonal equation\eqref{eikonal} using the variational method as described in Appendix-B of the work \cite{EIJ}. Solving \eqref{PDEtransportk}  is done by combining a finite difference method in time with a 2D finite volume method in space.  We approximate the term \( \text{div}(V_1 \rho_1) \) in the cell \( C_{i,j} = [x_{i-1/2}, x_{i+1/2}] \times [y_{i,j-1}, y_{i,j+1/2}] \) as follows:
	
	$$\begin{aligned}
		\left( \text{div}(V_1 \rho_1) \right)_{i,j} &= \frac{1}{\Delta x} \left[ F_{\text{up}} \left( V_{i+\frac{1}{2},j}^x, \rho_{i+\frac{1}{2},j}^k , -\rho_{i+\frac{1}{2},j}^k \right) - F_{\text{up}} \left( V_{i-\frac{1}{2},j}^x, \rho_{i-\frac{1}{2},j}^k , -\rho_{i-\frac{1}{2},j}^k \right) \right] \\
		& \quad + \frac{1}{\Delta y} \left[ F_{\text{up}} \left( V_{i,j+\frac{1}{2}}^y, \rho_{i,j+\frac{1}{2}}^k , -\rho_{i,j+\frac{1}{2}}^k \right) - F_{\text{up}} \left( V_{i,j-\frac{1}{2}}^x, \rho_{i,j-\frac{1}{2}}^k , -\rho_{i,j-\frac{1}{2}}^k \right) \right].
	\end{aligned}$$
	We consider in this discretisation \(\rho = \rho_1\), and  
	\(\left( \text{div}(V  \rho_1) \right)_{i,j}\) is the value of \(\text{div}(V_1 \rho_1)\) in the cell \(C_{i,j}\), \((\Delta x, \Delta y)\) are the spatial discretizations.
	$$F_{\text{up}}(x,y,z) =
	\left\{\begin{array}{ll}
		xy & \text{if} \quad x \geq 0, \\
		xz & \text{if} \quad x < 0.
	\end{array}\right.$$
	For the time discretization, we use the Euler explicit method to approximate the time derivative of the density. The overall scheme can be written as:
	\begin{align}
		{\frac{\rho_{i,j}^{k+1} - \rho_{i,j}^k}{\tau}}
		{+}  {\frac{1}{\Delta x} 
			\left( 
			F_{\text{up}}(V_{i+\frac{1}{2},j}^x, \rho_{i+\frac{1}{2},j}^k, \rho_{i+\frac{1}{2},j}^k, \rho_{i-\frac{1}{2},j}^k)
			- F_{\text{up}}V_{i-\frac{1}{2},j}^x, \rho_{i-\frac{1}{2},j}^k, \rho_{i,j}^k) 
			\right)} \notag \\
		{+}  {\frac{1}{\Delta y} 
			\left( 
			F_{\text{up}}(V_{i,j+\frac{1}{2}}^y, \rho_{i,j+\frac{1}{2}}^k, \rho_{i,j+1}^k, \rho_{i,j-\frac{1}{2}}^k)
			- F_{\text{up}}(V_{i,j-\frac{1}{2}}^y, \rho_{i,j-\frac{1}{2}}^k, \rho_{i,j}^k) 
			\right)}  {=}  {0}.
		\label{eq:upwind}
	\end{align}
	where \( \rho_{i,j}^{k+1} \) is the average value of \( \rho_1 \) in the cell \( C_{i,j} = [x_{i-\frac{1}{2},j}, x_{i+\frac{1}{2},j}] \times [y_{i,j-\frac{1}{2}}, y_{i,j+\frac{1}{2}}] \) at time \( (k+1)\tau \), and \( \rho_{a,b}^k, V_{a,b}^x \) (respectively \( V_{a,b}^y \)) are the values of \( \rho_1 \) and \( V \) at the interface \( x_{a,b} \) (respectively \( y_{a,b} \)) at time \( k\tau \). Notice that in practice, we take \( \Delta x = \Delta y = h \), where \( h \) is the mesh size introduced above.
	
	Using the upwind scheme we have and substituting in (\ref{eq:upwind}), the density \( \rho_{i,j}^{k+1} \) can be written as ( \cite{MRS1}):
	\begin{align}
		\rho_{i,j}^{k+1} 
		& = 
		\rho_{i,j}^k 
		-  \frac{\tau}{h} \left( 
		F_{\text{up}}(V_{i+\frac{1}{2},j}^x, \rho_{i,j}^k,\rho_{i-1,j}^k) 
		- F_{\text{up}}(V_{i-\frac{1}{2},j}^x, \rho_{i-1,j}^k, \rho_{i,j}^k) 
		\right)   \notag \\
		& -  \frac{\tau}{h} \left( 
		F_{\text{up}}(V_{i,j+\frac{1}{2}}^y, \rho_{i,j}^k,\rho_{i,j-1}^k) 
		- F_{\text{up}}(V_{i,j-\frac{1}{2}}^y, \rho_{i,j-1}^k, \rho_{i,j}^k) 
		\right) .
		\label{eq:upwindscheme}
	\end{align}
	We consider that no flux is entering the room from the boundary at \( \partial \Omega \). This is equivalent to impose \( \rho_{i-\frac{1}{2},j}^k V_{i-\frac{1}{2},j}^x = 0 \) and \( \rho_{i,j-\frac{1}{2}}^k V_{i,j-\frac{1}{2}}^y = 0 \) at \( i = 1 \) and \( j = 1 \) respectively. The scheme (\ref{eq:upwindscheme}) is conservative, stable under the CFL condition \( \| V \|_\infty \frac{\tau}{h} \leq \frac{1}{2} \). This condition can be obtained using von Neumann stability analysis (cf. \cite{Charney,Crank}). We summarize this in the following algorithm:
	
	\vspace{0.5cm} 

	\begin{framed}
		\noindent
		\textbf{Algorithm 1: Prediction step} \\
		
		\textbf{1st step. Initialization:} Compute the velocity $\mathbf{V} = (V_x, V_y)$. Choose $\Delta x = \Delta y = h$ and $\tau$ such that 
		\[
		\|\mathbf{V}\|_\infty \frac{\tau}{h} \leq \frac{1}{2},
		\] 
		and take an initial density given by $\rho_{1,(i,j)}^k$ at time $k\tau$. \\
		
		\textbf{2nd step. Update the density:} At time $\left(k+1\right)\tau$, update $\rho_{i,j}$ using:
		\begin{align}
			{\rho_{i,j}^{k+1}} 
			& {=} 
			{\rho_{i,j}^k} 
			{-} 
			{\frac{\tau}{h} \left( 
				F_{\text{up}}(V_{i+\frac{1}{2},j}^x, \rho_{i,j}^k,\rho_{i-1,j}^k) 
				- F_{\text{up}}(V_{i-\frac{1}{2},j}^x, \rho_{i-1,j}^k, \rho_{i,j}^k) 
				\right)} \notag \\
			& {-} 
			{\frac{\tau}{h} \left( 
				F_{\text{up}}(V_{i,j+\frac{1}{2}}^y, \rho_{i,j}^k,\rho_{i,j-1}^k) 
				- F_{\text{up}}(V_{i,j-\frac{1}{2}}^y, \rho_{i,j-1}^k, \rho_{i,j}^k) 
				\right)}.
		\end{align}
	\end{framed}
	
	\textbf{Notice:} In the case where $V_x > 0$ and $V_y > 0$, the update formula reduces to:
	$$\rho_{i,j}^{k+1} = \rho_{i,j}^k 
	- \frac{\tau}{\Delta x} \left[ 
	\rho_{i,j}^k V_{i+\frac{1}{2},j}^x - \rho_{i-1,j}^k V_{i-\frac{1}{2},j}^x 
	\right] 
	- \frac{\tau}{\Delta y} \left[ 
	\rho_{i,j}^k V_{i,j+\frac{1}{2}}^y - \rho_{i,j-1}^k V_{i,j-\frac{1}{2}}^y 
	\right].$$
	\vspace{0.5cm} 
	
	\noindent \textbf{Discretization of the correction :} 
	When calculating $\rho_1^{k+1}$, which moves and may cause a sum with $\rho_2^k$ to be greater than 1, we need to adjust $\rho_2^k$ to ensure that the constraint $\rho_1 + \rho_2 \leq 1$ is met. Thus, the next step is to deal with congestion by solving the following minimum flow problem \eqref{minflowk}. 
	where $\kappa_k=1-\rho_1^{k+1}.$
	
	First, let us rewrite \eqref{minflowk}  in the form : 
	$$\textbf{(M)} : \min_{\rho_2, \Phi_2} A(\rho_2, \Phi_2) + \mathbb{I}_C(\Lambda(\rho_2, \Phi_2)),$$
	where (we omit the variable \(\tau\) to lighten the notation)
	$$A(\rho_2, \Phi_2) = \int_{\Omega} \tau \:  |\Phi_2(x)| \, dx + \mathbb{I}_{[0,\kappa]}(\rho_2),$$
	$$\Lambda(\rho_2, \Phi_2) = \rho_2 - \tau \, \text{div}(\Phi_2),$$
	and
	$$ B = \mathbb{I}_{{\rho_2^k}}.$$
	Here, \(\mathbb{I}_C\) is the indicator function of the set \( C \), and it is given by:
	$$\mathbb{I}_C(a) = 
	\left\{\begin{array}{ll} 
		0 & \text{if } a \in C, \\
		+\infty & \text{if } a \notin C.
	\end{array}\right.$$
	This problem can be solved efficiently using the Chambolle-Pock’s primal-dual (PD) algorithm (cf. \cite{Chambolle2004}).
	Based on the discrete gradient and divergence operators, we propose a discrete version of (M) as follows:
	
	$$(\text{M})_d : \min_{\rho_2, \Phi_2} \left\{ 
	h^2 \sum_{i=1}^{m+1} \sum_{j=1}^{n+1} \tau   \|\Phi_{2,i,j}\| + \mathbb{I}_{[0,\kappa]}(\rho_2) + \mathbb{I}_{\mathcal{C}}(\Lambda_h(\rho_2, \Phi_2))
	\right\},$$
	where 
	$$\mathcal{C} := \left\{ (a_{i,j}) : a_{i,j} = \rho_{(i,j)}^{k}, \ \forall (i,j) \in [\![1,m]\!] \times [\![1,n]\!], \ \Lambda_h(\rho_2, {\phi_2}) = \rho_2- \tau \, \text{div}_h \Phi_2 \right \}.$$ 
	In other words, the discrete version \( (\text{M})_d \) can be written as 
	$$\min_{\rho_2, \Phi_2} \mathcal{A}_h(\rho_2, \Phi_2) + \mathcal{B}_h(\Lambda_h(\rho_2, \Phi_2)),$$
	or in a primal-dual form as
	$$\min_{\rho_2, \Phi_2} \max_p \mathcal{A}_h(\rho_2, \Phi_2) + \langle u, \Lambda_h(\rho_2, \Phi_2) \rangle - \mathcal{B}_h^*(p),$$
	where 
	$$\mathcal{A}_h(\rho_2, \Phi_2) = h^2 \sum_{i=1}^{m+1} \sum_{j=1}^{n+1} \tau   \|\phi_{2,i,j}\| + \mathbb{I}_{[0,\kappa]}(\rho_2) 
	\quad \text{and} \quad 
	\mathcal{B}_h = \mathbb{I}_{\mathcal{C}}.$$ 
	Notice that in this case, (2.29) has a dual problem that reads:
	$$\min_{\rho_2} \max_{p} h^2 
	\left\{ 
	\sum_{i=1}^m \sum_{j=1}^n p_{i,j} \left( \rho_{i,j}^{k+1} - \rho_{i,j} \right) 
	: \|\nabla_h p_{i,j}\| \leq 1, \ 0 \leq   \rho_2 \leq \kappa
	\right\}.$$
	Then the (PD) algorithm ( \cite{CP})  can be applied to \( (\text{M})_d \) as follows:
	
	\begin{framed}
		\textbf{Algorithm 2: (PD) iterations} \\
		\textbf{1st step. Initialization:} Choose \(\alpha, \beta > 0\), \(\theta \in [0, 1]\), \(\rho_2^0\), \(\Phi_2^0\), and take \(u_0 = \Lambda_h(\rho_2^0, \Phi_2^0)\), \(\bar{p}_0\).
		
		\textbf{2nd step. For} \(l \leq \text{Itermax}\), do:
		
		$$	(\rho_2^{l+1}, \Phi_2^{l+1}) = \text{Prox}_{\beta A_h} \left( \left( \rho_2^l, \Phi_2^l \right) - \beta \Lambda_h^*(\bar{p}_l) \right);$$
		$$	p_{l+1} = \text{Prox}_{\alpha B_h^*} \left( p_l + \alpha \Lambda_h(\rho_2^{l+1}, \Phi_2^{l+1}) \right);$$
		$$	\bar{p}_{l+1} = p_{l+1} + \theta \left( p_{l+1} - p_l \right).$$
	\end{framed}
	Let us recall that the proximal operator is defined as follows: \\
	
	$$\text{Prox}_{\alpha E}(p) = argmin_{q} \left( \frac{1}{2} \|p - q\|^2 + \eta E(q) \right).$$
	
	\noindent \textbf{Computation of the proximal operators}\\
	
	Note that the functionals \( \mathcal{A}_h \) and \( \mathcal{B}_h^* \) can be computed explicitly. Specifically, the functional \( \mathcal{A}_h \) is separable in terms of the variables \( \rho_2 \) and \( \Phi_2 \):
	$$\mathcal{A}_h(\rho_2, \Phi_2) = \mathbb{I}_{[0,1-\rho_1]}(\rho_2) + \|\Phi_2\|_1.$$
	Therefore, \( \text{Prox}_{\eta \mathcal{A}_h} \) consists of the sum of a projection onto the first component and the so-called soft-thresholding. More explicitly:
	$$\left( \text{Prox}_{\mathcal{A}_h}(\rho_2, \Phi_2) \right)_{i,j} = 
	\left( 
	\max\left(0, \min\left(1 - \rho_{1,(i,j)}, \rho_{(i,j)}\right)\right), 
	\max\left(0, 1 - \frac{1}{|\Phi_{2,(i,j)}|} \right) \Phi_{2,(i,j)}
	\right).$$
	
	For \( \mathcal{B}_h^* \), in order to compute \( \text{Prox}_{\alpha \mathcal{B}_h^*} \), we apply Moreau’s identity:
	$$p = \text{Prox}_{\alpha \mathcal{B}_h^*}(p) + \alpha \text{Prox}_{\alpha^{-1} \mathcal{B}_h}(p / \eta),$$
	and note that \( \text{Prox}_{\alpha^{-1} \mathcal{B}_h}(a, b) \) is simply the projection onto \( \mathcal{C} \). Hence, we have:
	$$\left( \text{Prox}_{\alpha \mathcal{B}_h^*}(p) \right)_{i,j} = 
	\left( 
	p_{i,j} - \alpha \text{Proj}_{\mathcal{C}_{i,j}}\left(p_{i,j} / \alpha\right) 
	\right).$$
	
	In summary, the steps in \textbf{Algorithm 3} to solve \( (\text{M})_d \) are as follows:\\
	
	\begin{framed}
		\textbf{Algorithm 3 (PD) iterations for \( (\text{M})_d \)} \\
		
		\textbf{Initialization:} Let \( k = 0 \), choose \( \alpha, \beta > 0 \) such that \( \alpha \beta \|\Lambda_h\|^2 < 1 \). Choose \( \rho_1^0, \rho_2^0, \Phi_2^0 \), and \( p^0 = \bar{p}^0 = p_0 \). \\
		
		\textbf{Primal step:}
		$$\left( \rho_2^{l+1}  \right)_{i,j} =  
		\max \left( 0, \min \left( 1 - \rho_{1,i,j}^l, \rho_{i,j}^l - \beta \bar{p}_{i,j}^l \right) \right),  $$ 
		and $$
		\left(   \Phi_2^{l+1} \right)_{i,j}  =   \max \left( 0, 1 - \frac{1}{|\Phi_{2,i,j}^l - \beta \nabla_h \bar{p}_{i,j}^l|} \right) 
		\left( \Phi_{2,i,j}^l - \beta \nabla_h \bar{p}_{i,j}^l \right)
		.$$
		\textbf{Dual step:}
		\[
		v^{l+1} = p^l + \alpha \rho_2^{l+1} - \alpha \, \text{div}_h\left(\Phi_2^{l+1}\right),
		\]
		\[
		p_{i,j}^{l+1} = v_{i,j}^{l+1} - \alpha \, \text{Proj}{\mathcal{C}{i,j}}\left(v_{i,j}^{l+1} / \alpha\right), 
		\quad 1 \leq i \leq m, \ 1 \leq j \leq n.
		\]
		
		\textbf{Extragradient:}
		\[
		\bar{p}^{l+1} = 2p^{l+1} - p^l.
		\]
		
	\end{framed}
	
	\subsection{Numerical computation:} 
	
	This section presents several illustrative examples. The first three cases consider a transport equation where the vector field $V$ is defined by a specific destination, inducing directed motion in population $1$ and compelling population $2$ to adapt its distribution to accommodate spatial constraints. Examples $4-7$ explore scenarios where the transport vector field is determined by a non-local potential, enabling population $1$ to disperse based on local neighborhood influences through Gaussian convolution. In contrast to previous cases where boundary conditions were inherent to the vector field $V,$ we now examine two types of boundary conditions: Dirichlet, allowing population members to exit the domain, and reflective, where individuals may or may not re-enter. Finally, in Examples $8-9$ we numerically analyze a scenario where the potential field $V$ is directly linked to the diffusion of population $1.$  
	
	\medskip 
	All computational codes were implemented in Python\footnote{Demonstration videos are available at https://www.unilim.fr/pages\_perso/noureddine.igbida/ }. 
	
	\medskip 
	\subsubsection{Example 1:} A population $\rho_1$ is moving rightward and encounters a stationary population $\rho_2$ : 
	\begin{figure}[H]
		\centering
		\includegraphics[width=0.9\textwidth,height=0.14\textheight,center]{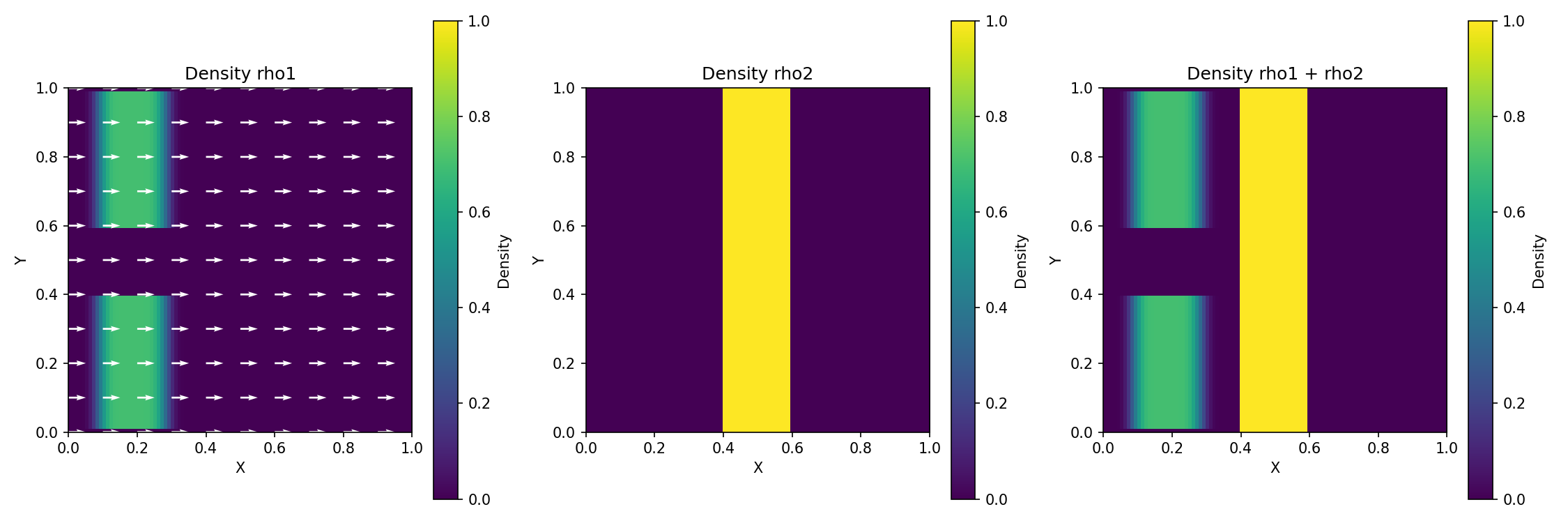}
		\includegraphics[width=0.9\textwidth,height=0.14\textheight,center]{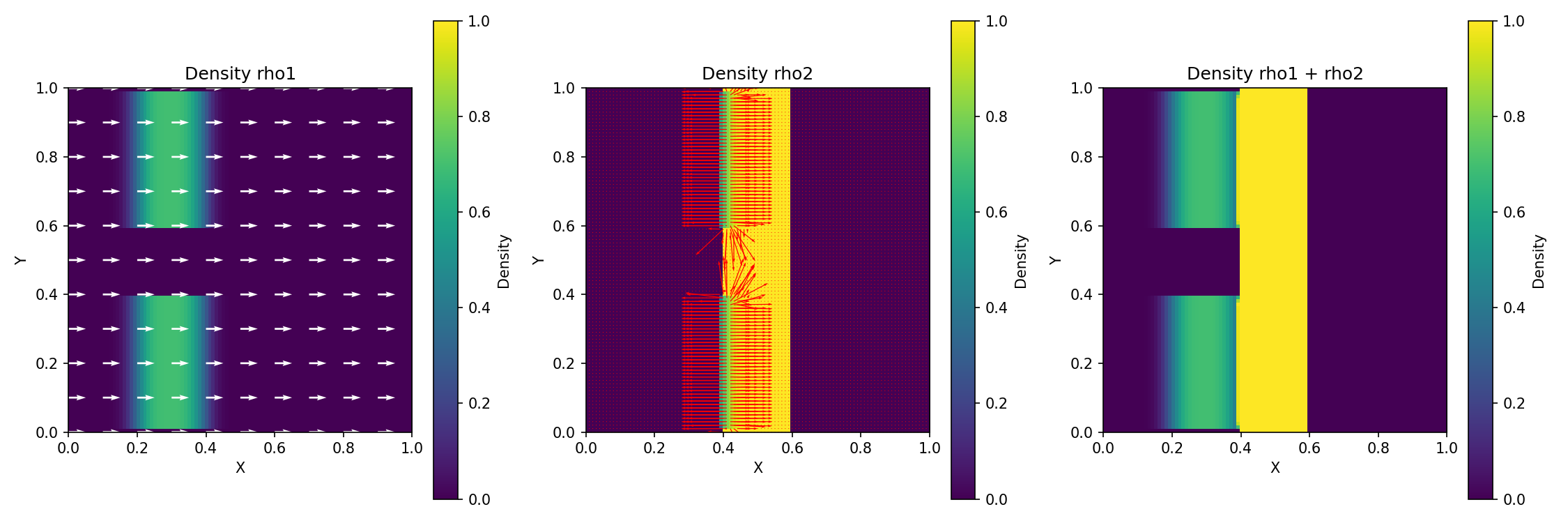}
		\includegraphics[width=0.9\textwidth,height=0.14\textheight,center]{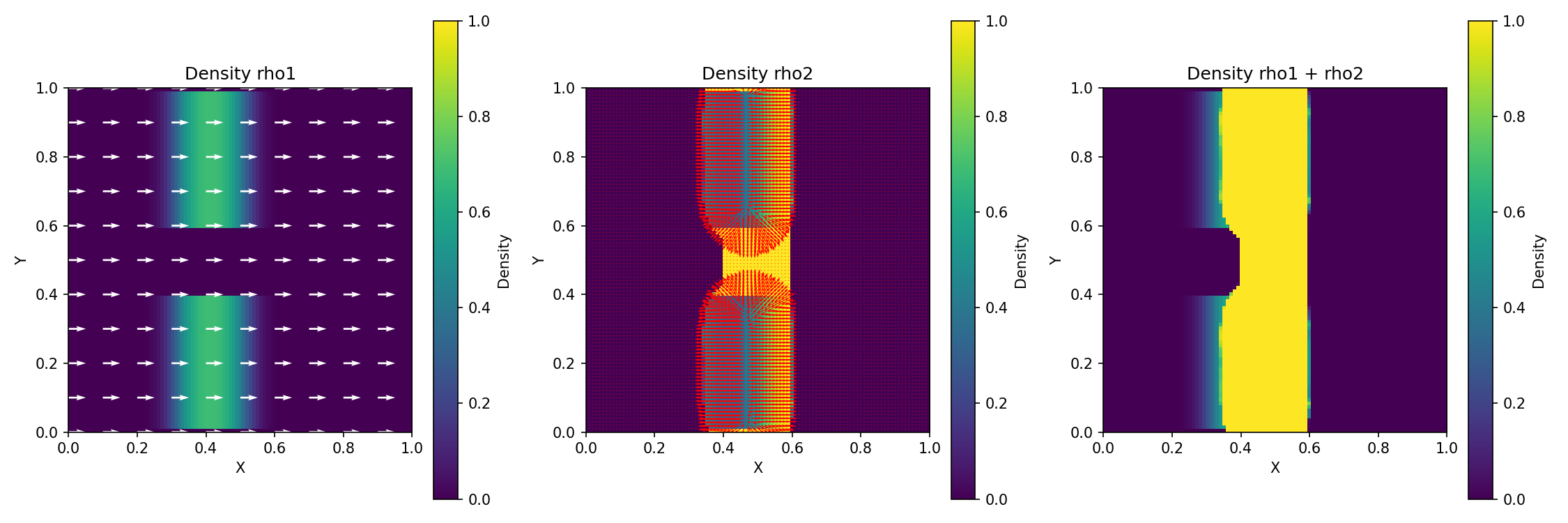}
		\includegraphics[width=0.9\textwidth,height=0.14\textheight,center]{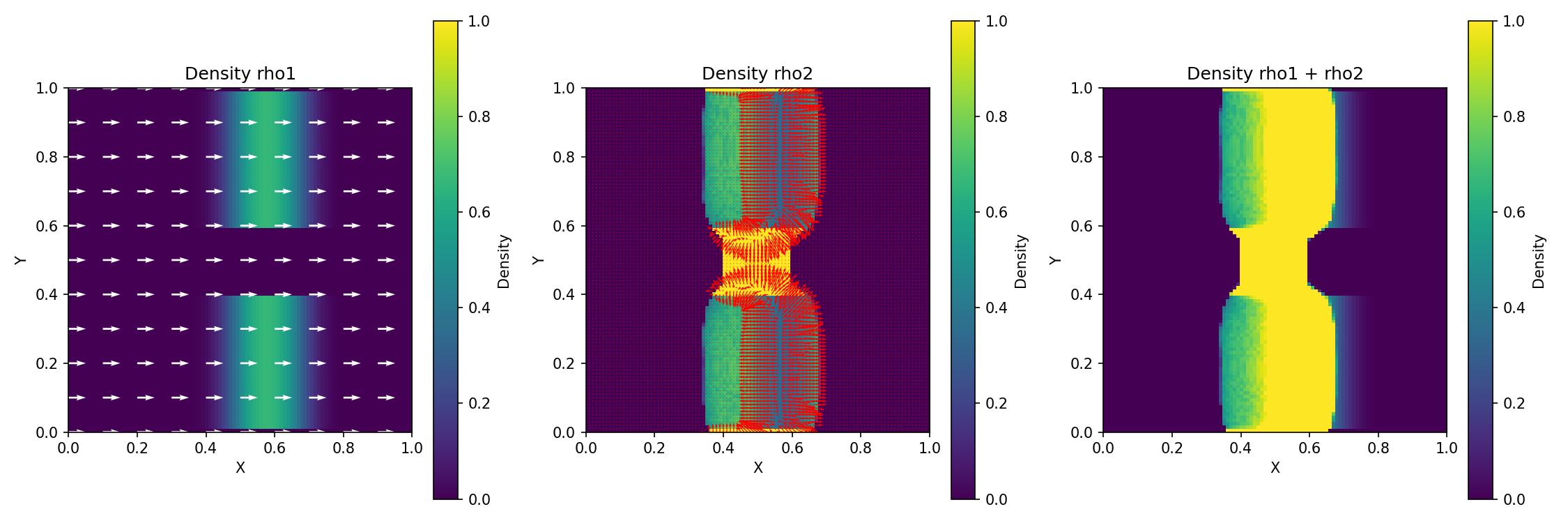}
		\includegraphics[width=0.9\textwidth,height=0.14\textheight,center]{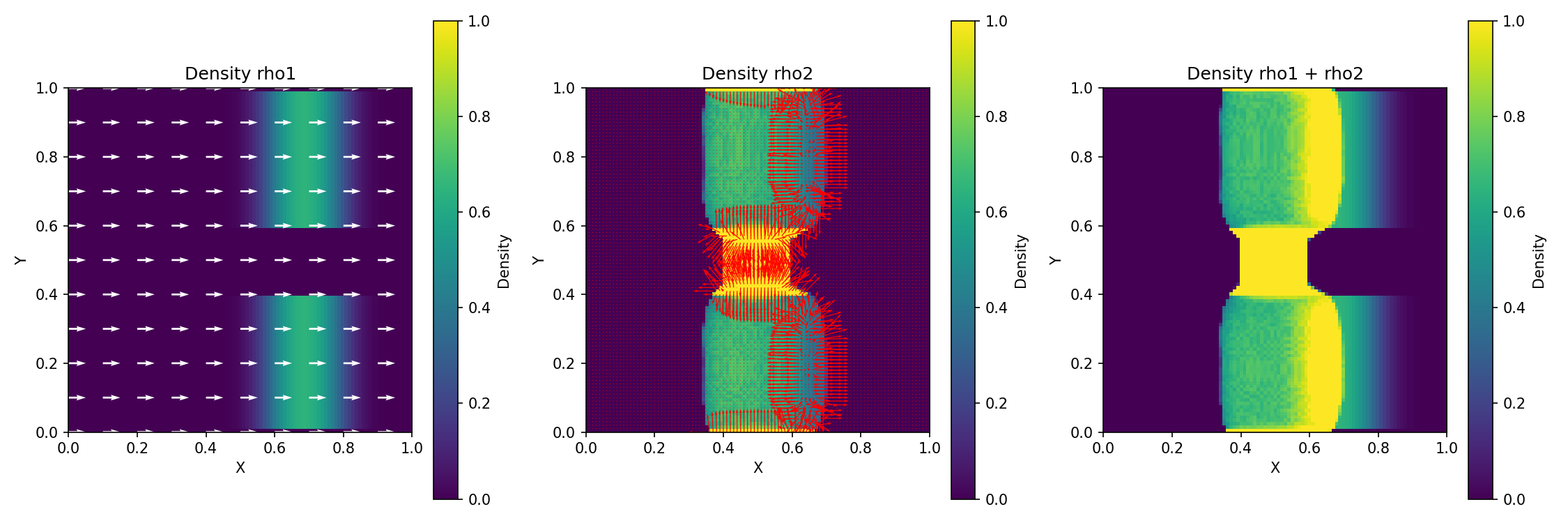}
		\includegraphics[width=0.9\textwidth,height=0.14\textheight,center]{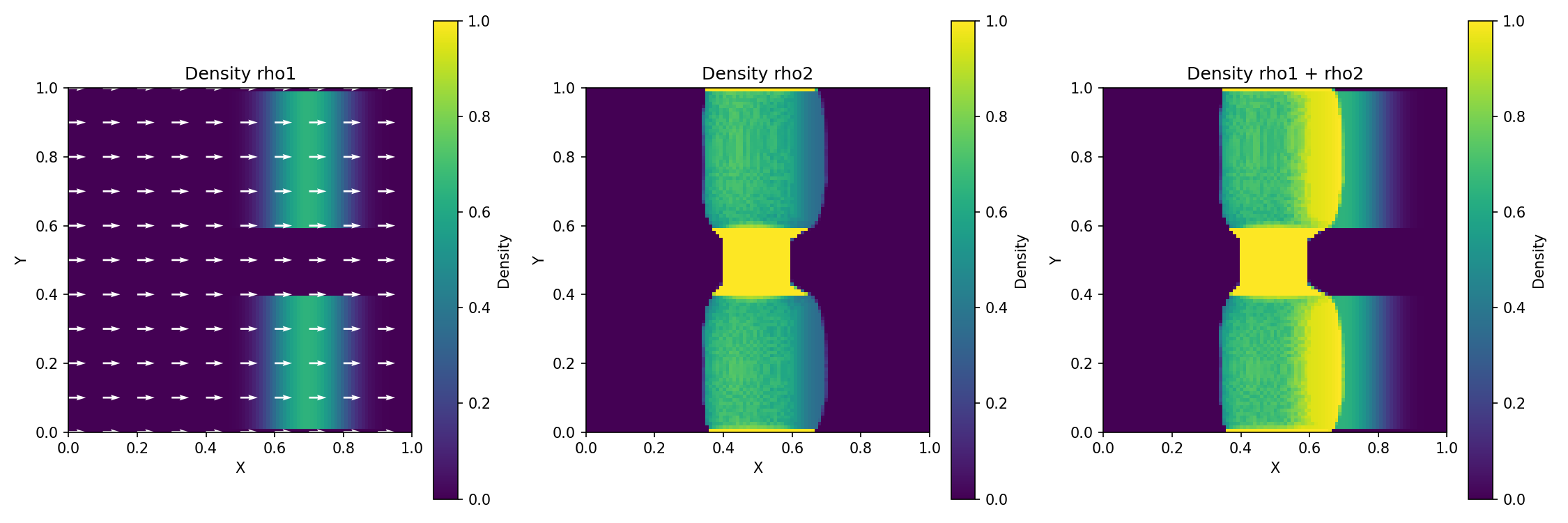}
		\caption{Snapshots of  $\rho_2$ (left) moving along the red-patterned field, interacting with $\rho_1$ (moving along the white field, center). The right image visualizes the combined density $\rho_1+ \rho_2$. The red movement of population $\rho_2$ is only triggered when it encounters population $\rho_1$ and $\rho_1+ \rho_2= 1$}
		\label{fig:img1-1}
	\end{figure}

	\medskip 
	
	\subsubsection{Example 2:} As in Example $1,$ the population $\rho_1$ is moving rightward and encounters a stationary population $\rho_2$: 
	\begin{figure}[H]
		\centering
		\includegraphics[width=0.9\textwidth,height=0.14\textheight,center]{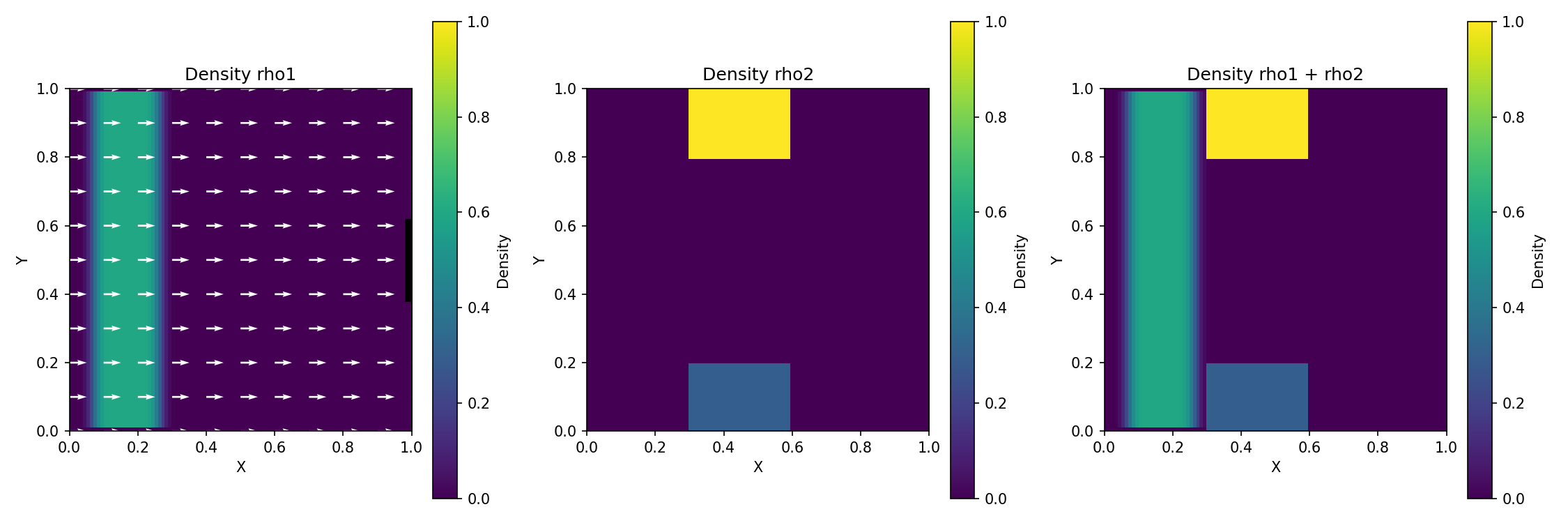}
		\includegraphics[width=0.9\textwidth,height=0.14\textheight,center]{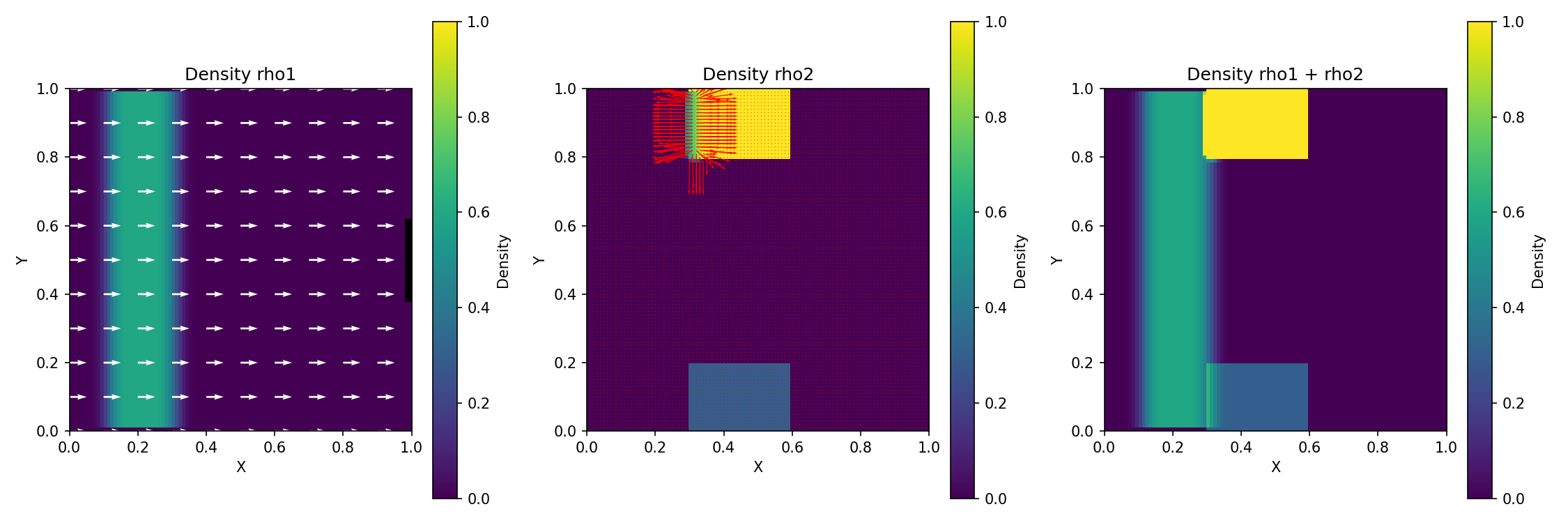}
		\includegraphics[width=0.9\textwidth,height=0.14\textheight,center]{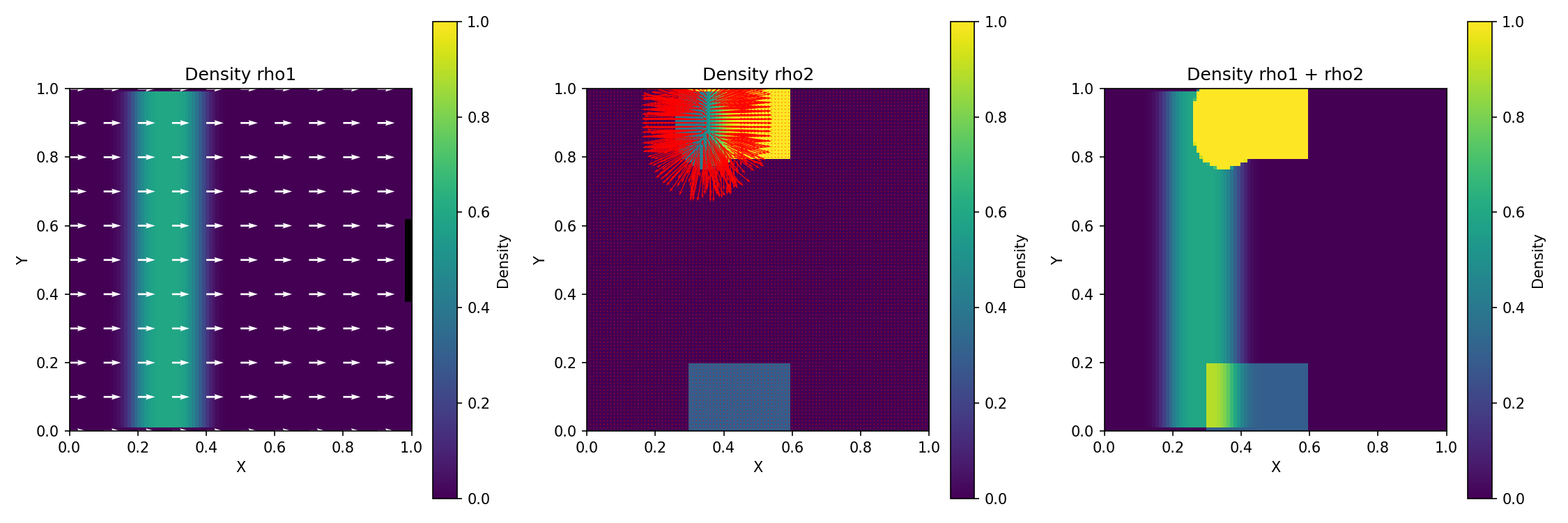}
		\includegraphics[width=0.9\textwidth,height=0.14\textheight,center]{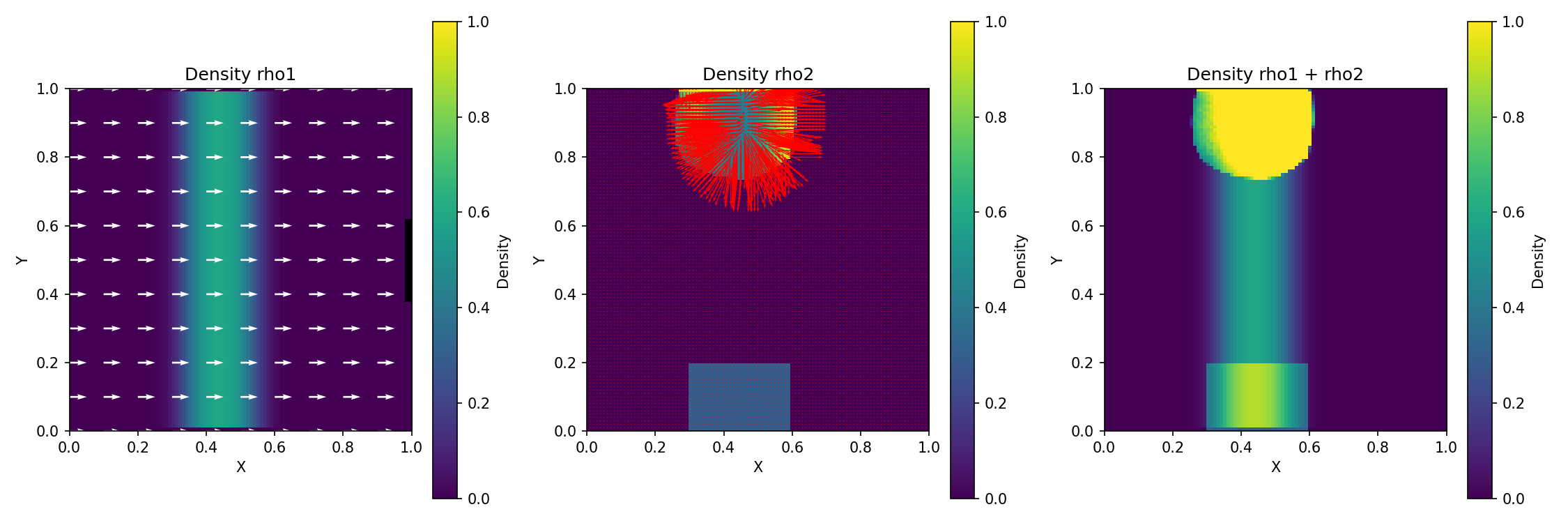}
		\includegraphics[width=0.9\textwidth,height=0.14\textheight,center]{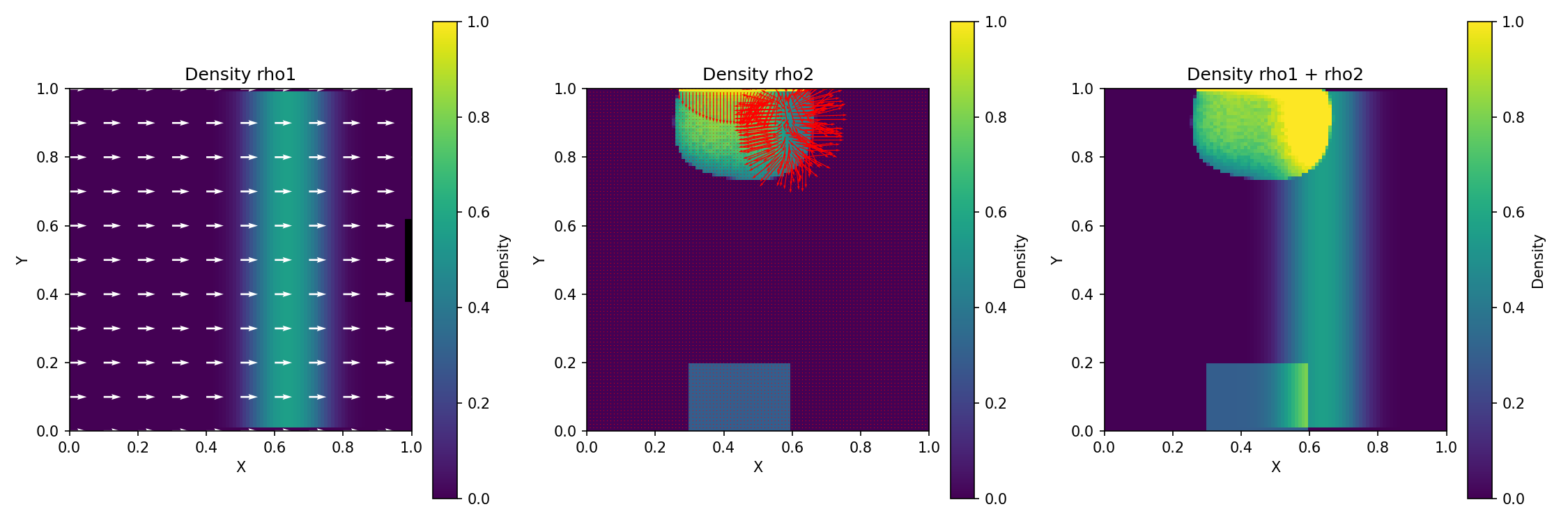}
		\includegraphics[width=0.9\textwidth,height=0.14\textheight,center]{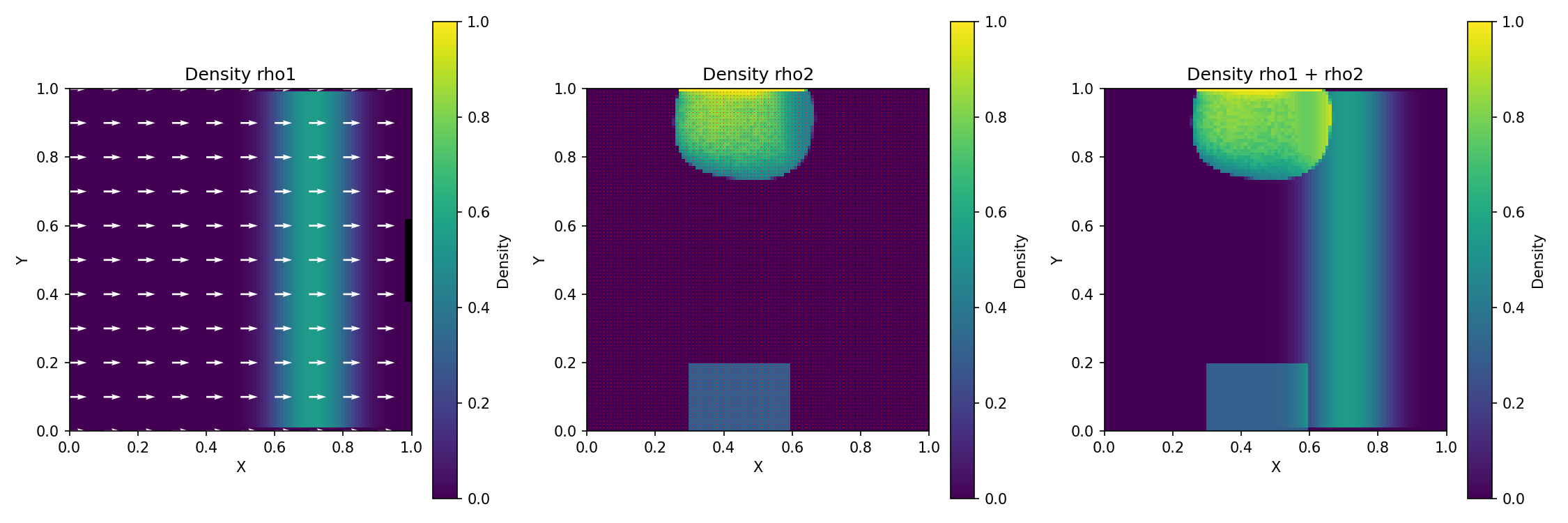}
		\caption{The absence of $\rho_2$ flux (red arrows) in the bottom block can be attributed to the enforcement of the maximum density constraint throughout the dynamical evolution.}
		\label{fig:img1-2}
	\end{figure}
	
	\subsubsection{Example 3:} A rotating population $\rho_1$ meets a static population $\rho_2$ along its trajectory:
	\begin{figure}[H]
		\centering
		\includegraphics[width=0.9\textwidth,height=0.14\textheight,center]{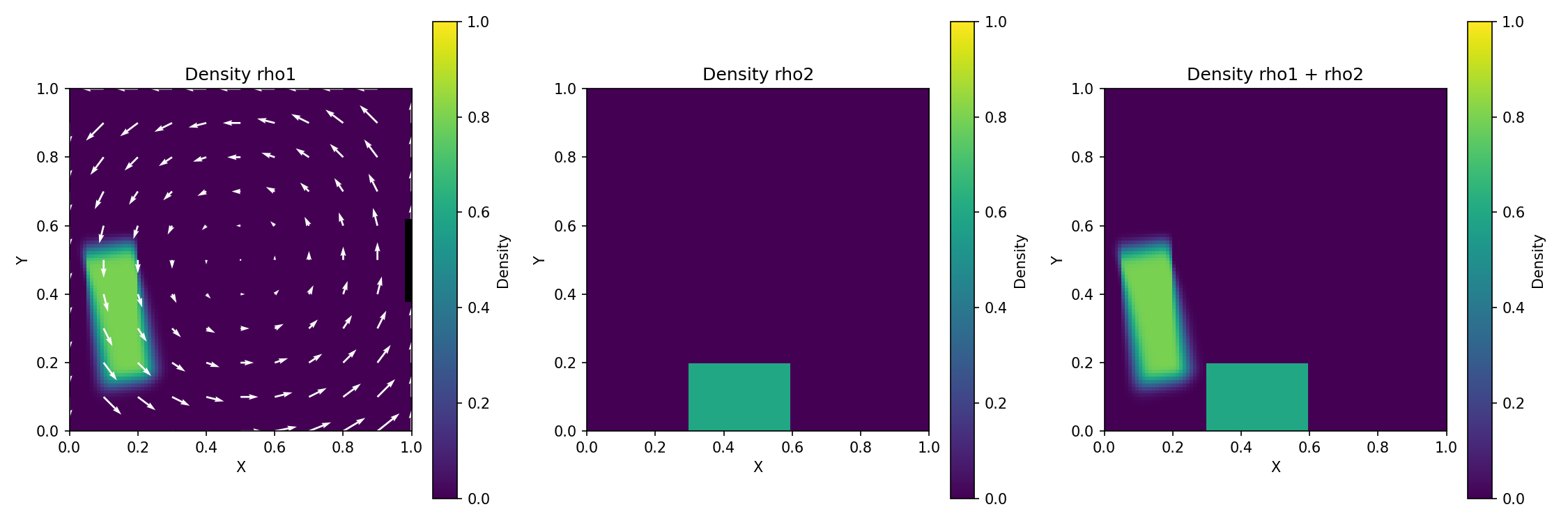}
		\includegraphics[width=0.9\textwidth,height=0.14\textheight,center]{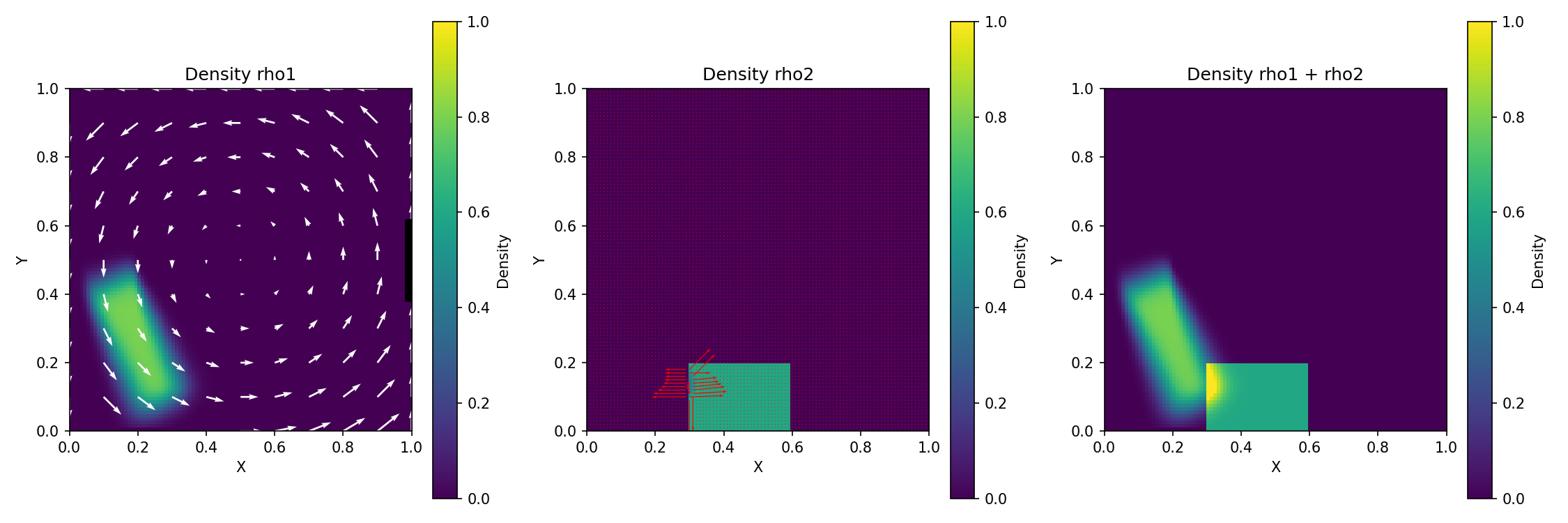}
		\includegraphics[width=0.9\textwidth,height=0.14\textheight,center]{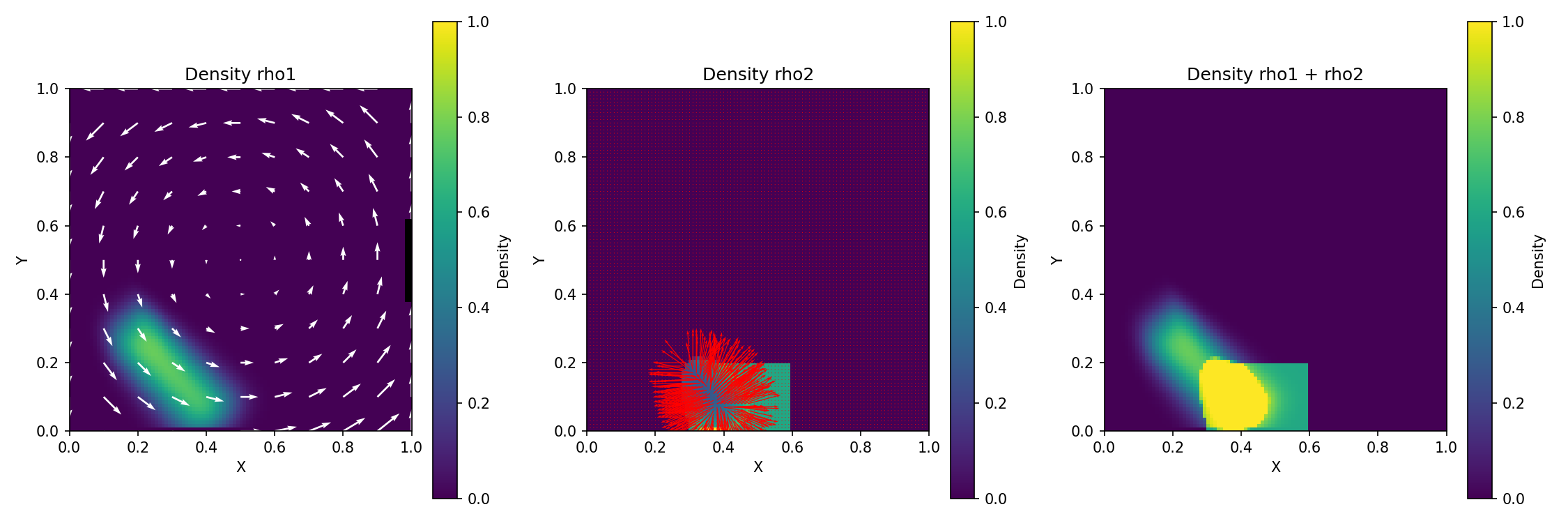}
		\includegraphics[width=0.9\textwidth,height=0.14\textheight,center]{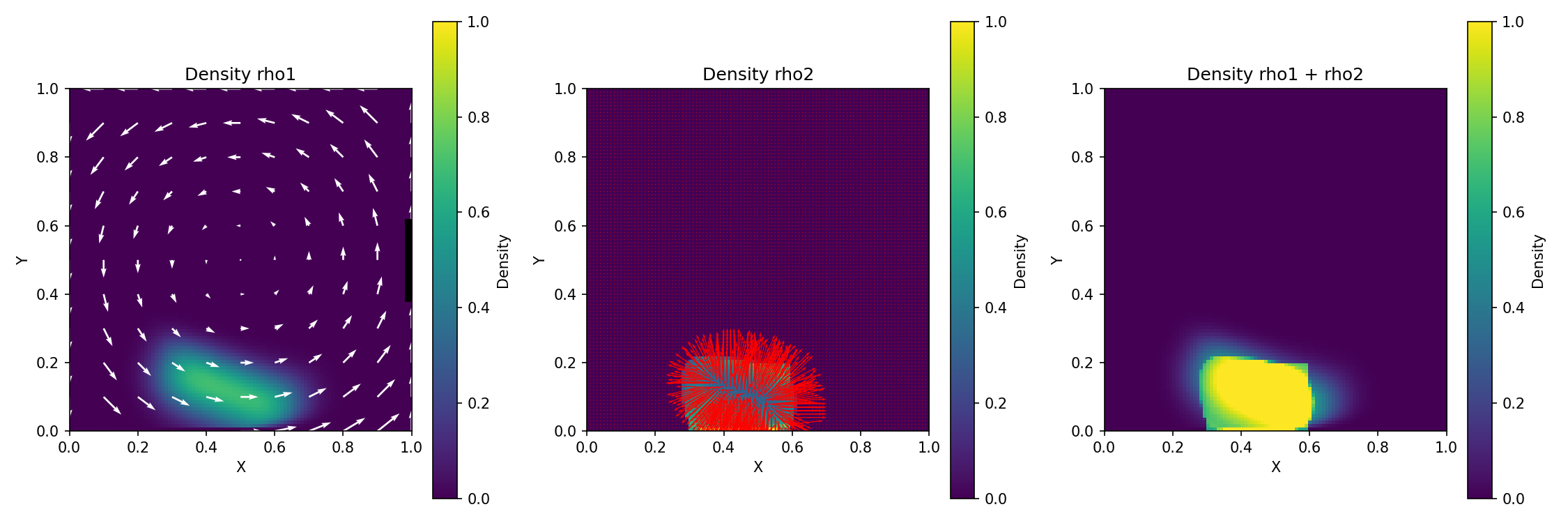}
		\includegraphics[width=0.9\textwidth,height=0.14\textheight,center]{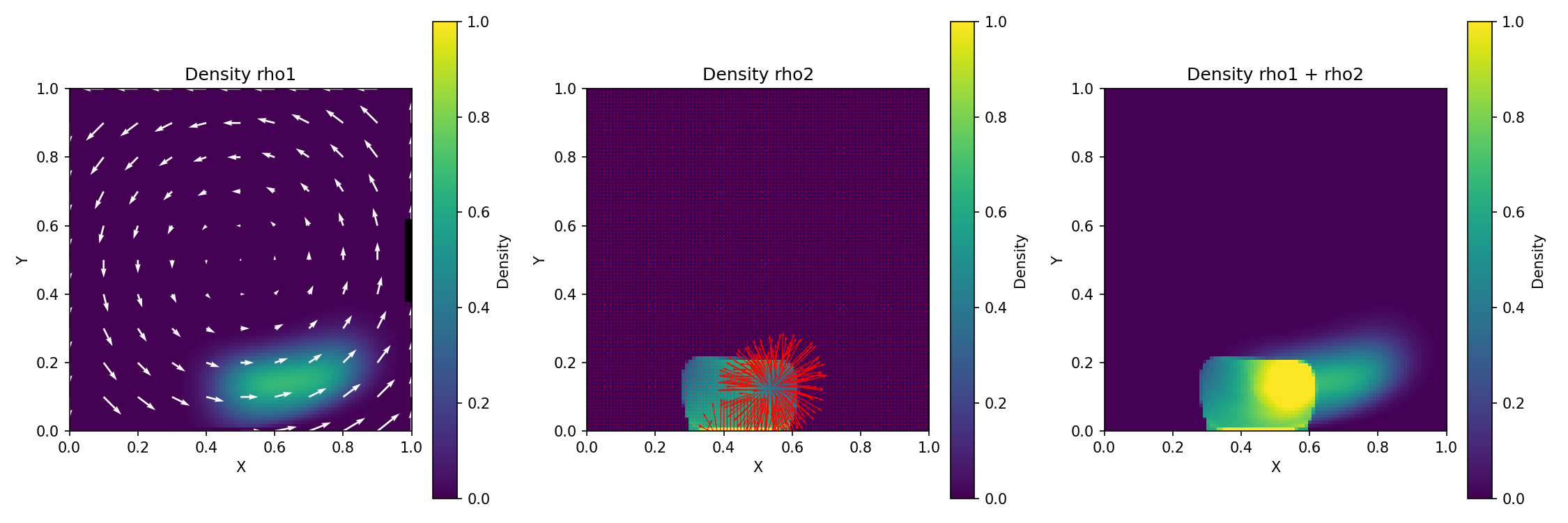}
		\includegraphics[width=0.9\textwidth,height=0.14\textheight,center]{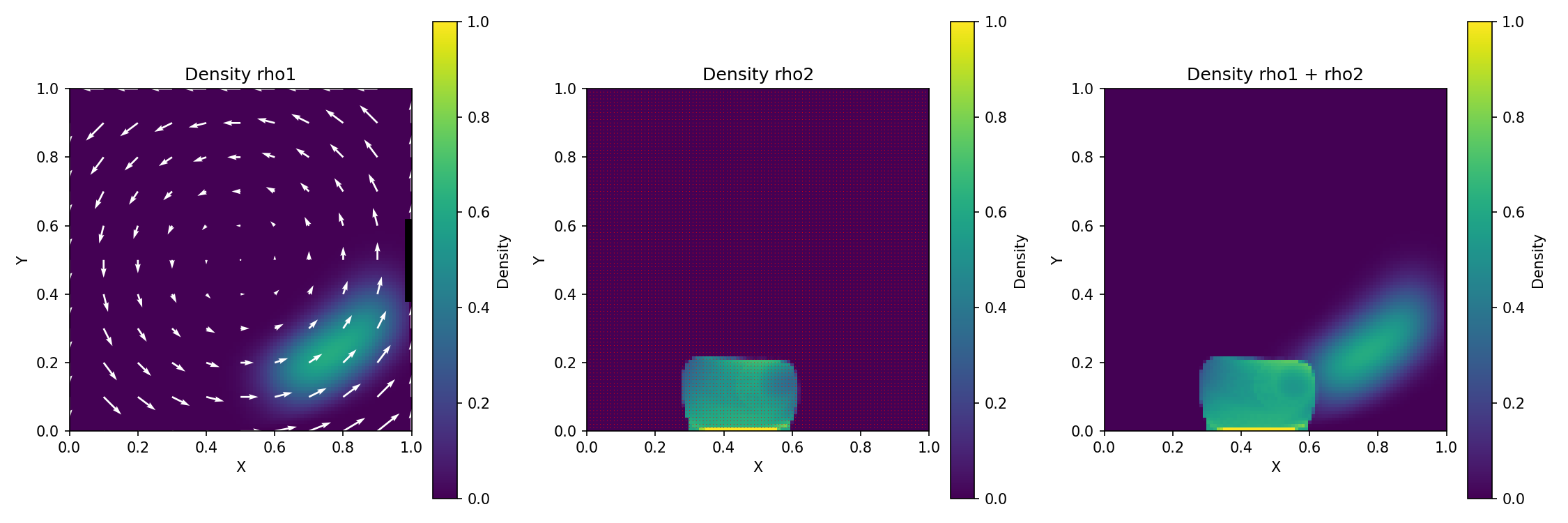}
		\caption{Snapshots of  $\rho_2$ (left) totating  along the red-patterned field, interacting with $\rho_1$ (moving along the white field, center).}
		\label{fig:img1-3}
	\end{figure}
	
	\subsubsection{Example 4:} A population $\rho_1$  spreads out to reduce density variations in its immediate surroundings through a Gaussian smoothing process. This expanding population eventually meets a stationary population $\rho_2$ surrounding it. The initial distribution of $\rho_1$ is a combination of four Gaussian peaks, and $\rho_2$ is simply the complement of $\rho_1.$ The convolution process effectively extends the domain of the population $\rho_1$ with zeros, forcing its members to leave the domain. 
	\begin{itemize}
			\item[]\textbf{Data set} 
		\item $\rho_{01}=0.9 (K_\lambda^{\mu_1} +K_\lambda^{\mu_2} + K_\lambda^{\mu_3}+K_\lambda^{\mu_4} )$ in $\Omega:=[0,1)\times [0,1]$
		with $\lambda =0.1,$ $\mu_1=(0.3, 0.7),$ $\mu_2(0.7, 0.3),$ $\mu_3=(0.3, 0.3)$ and $\mu_4=(0.7, 0.7)$
		
		\item $\rho_{02}=0.9(1-\rho_{01}).$
		
		\item The vector field potential here is given by $\varphi = \rho_1\star K_\sigma^\mu,$ where $\sigma =5$ and $\mu=(0,0).$  
	\end{itemize}

	\begin{figure}[H]
		\centering
		\includegraphics[width=0.9\textwidth,height=0.14\textheight,center]{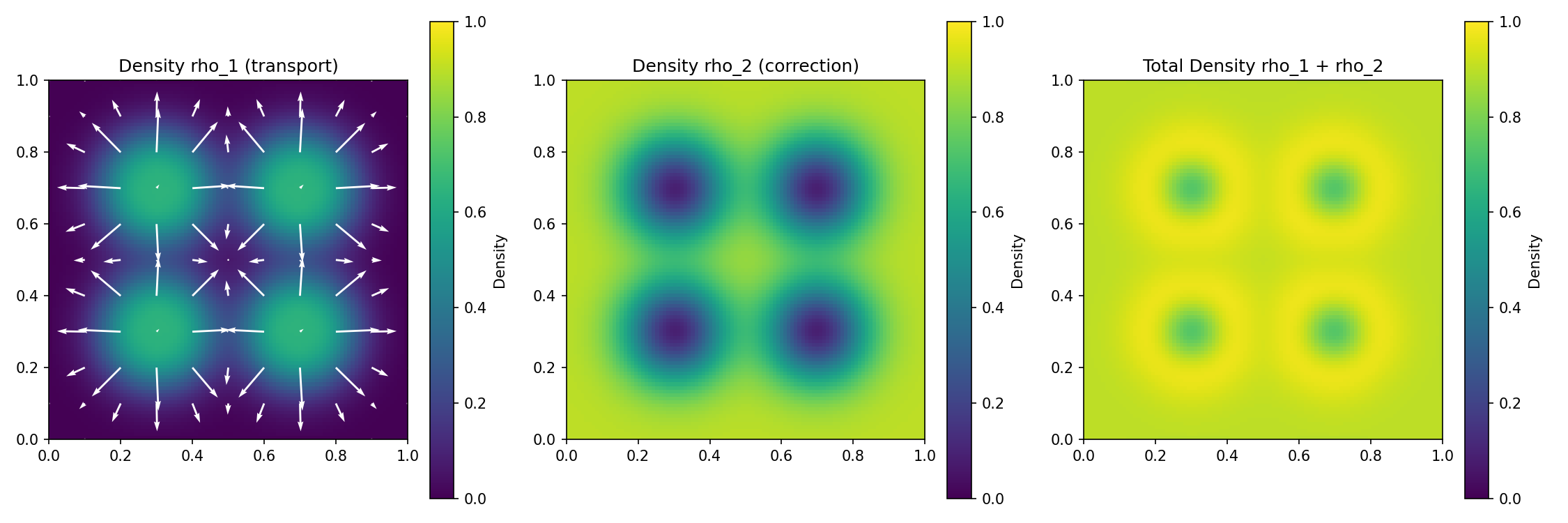}
		\includegraphics[width=0.9\textwidth,height=0.14\textheight,center]{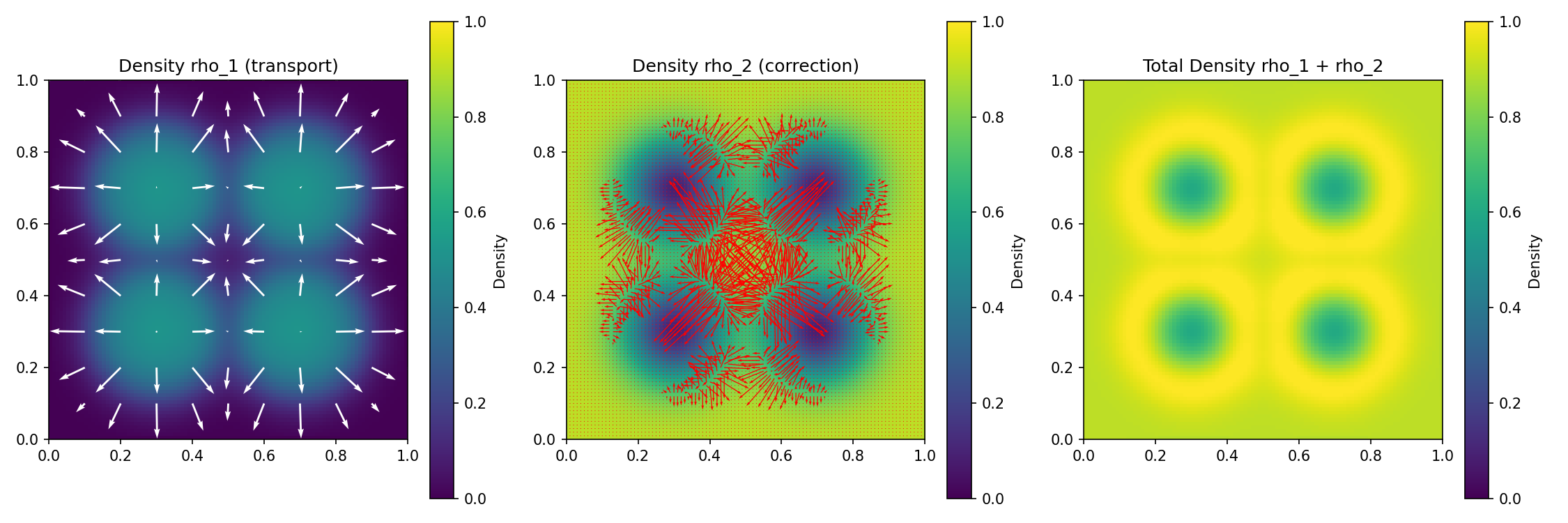}
		\includegraphics[width=0.9\textwidth,height=0.14\textheight,center]{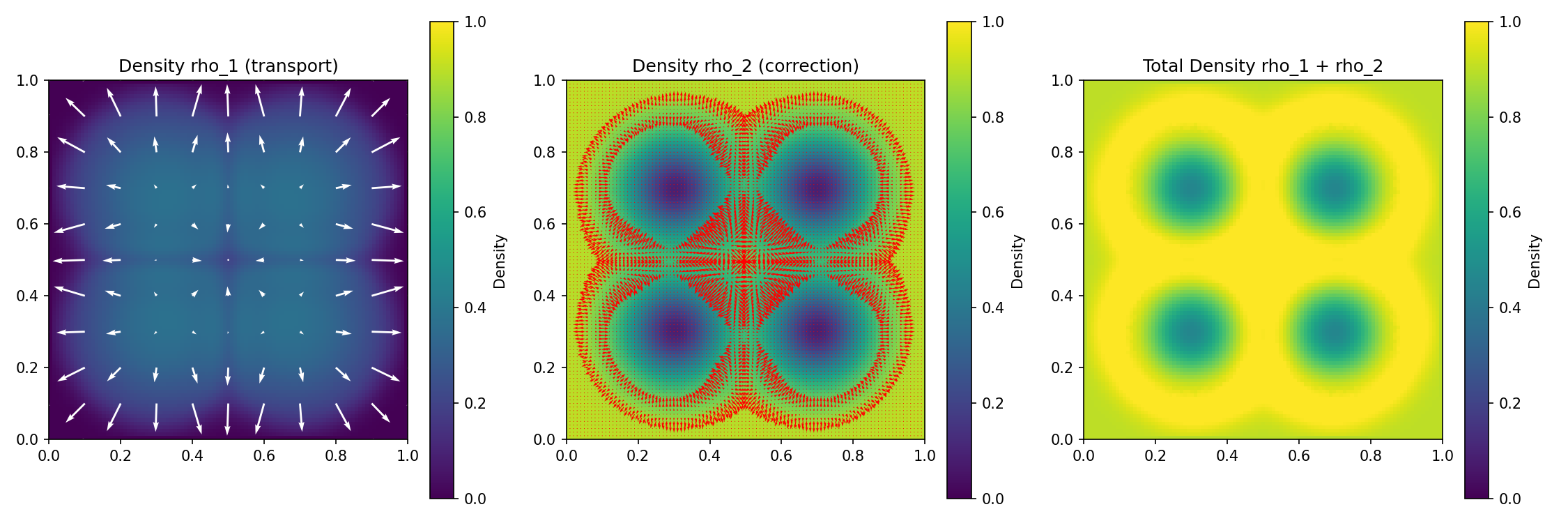}
		\includegraphics[width=0.9\textwidth,height=0.14\textheight,center]{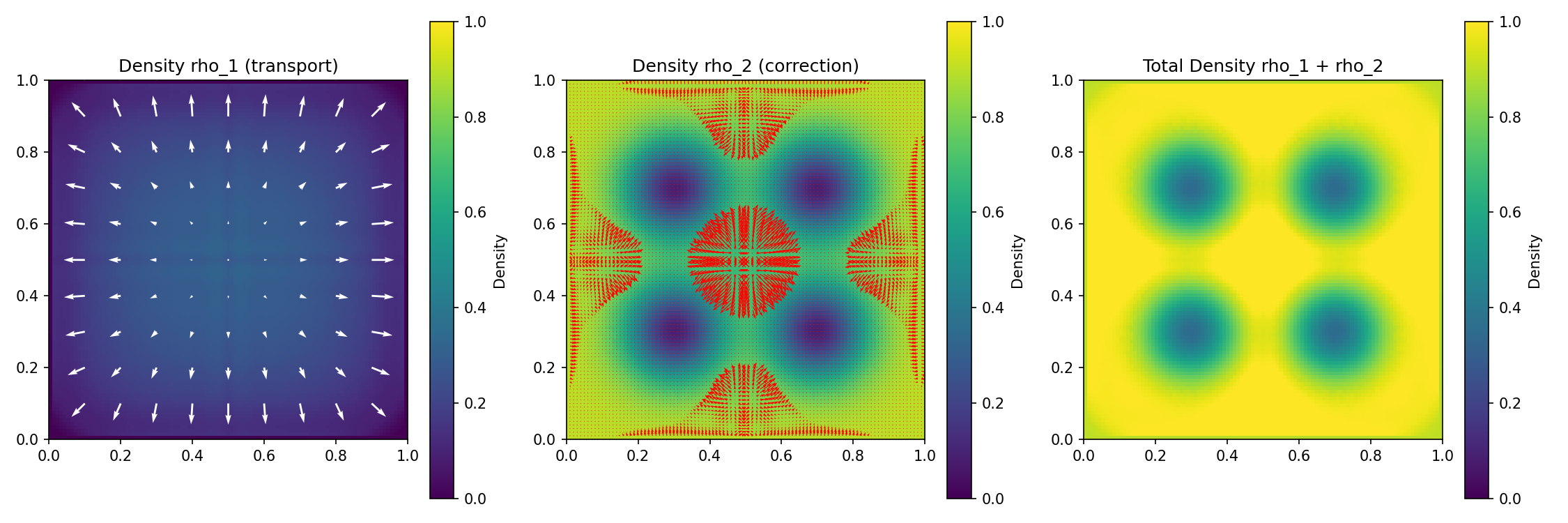}
		\includegraphics[width=0.9\textwidth,height=0.14\textheight,center]{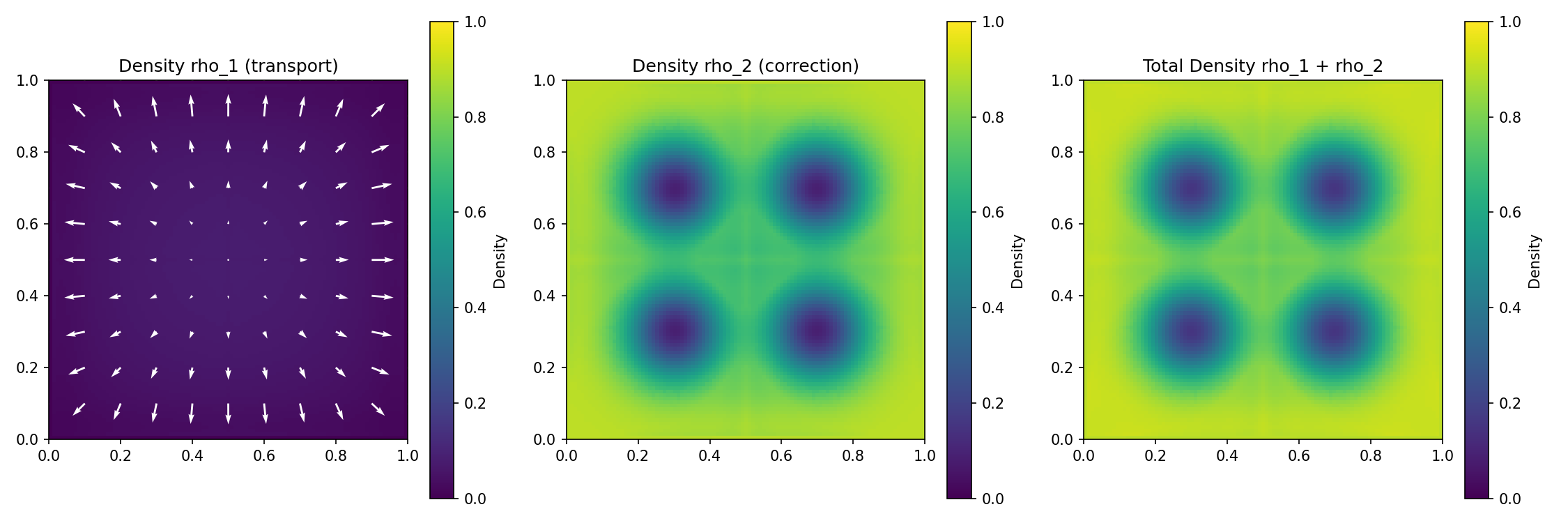}
		\caption{Snapshots of  $\rho_2$ (left) moving along the red-patterned field, interacting with $\rho_1$ (moving along the white field, center). The right image visualizes the combined density $\rho_1+ \rho_2$. Population 2’s red-indicated movement is triggered only when it encounters population $\rho_1$ and$\rho_1+ \rho_2= 1.$ By the end of the simulation, the entire population $\rho_1$ continues to leave the domain without congestion, and population $\rho_2$ is left with its final distribution from the previous time step.}
		\label{fig:img1-4}
	\end{figure} 
	
	\subsubsection{Example 5:} In this case, we use the same data set as in Example 4, but we modify the boundary condition. We implement a "reflect" type boundary condition that reflects the value closest to the boundary. This technique can be effectively used  when a pedestrian reaches the boundary, instead of simply stopping or disappearing, the "reflect" condition simulates a scenario where the pedestrian "bounces back" into the domain. This is achieved by mirroring the pedestrian density across the boundary.
	
	\begin{figure}[H]
		\centering
		\includegraphics[width=0.9\textwidth,height=0.14\textheight,center]{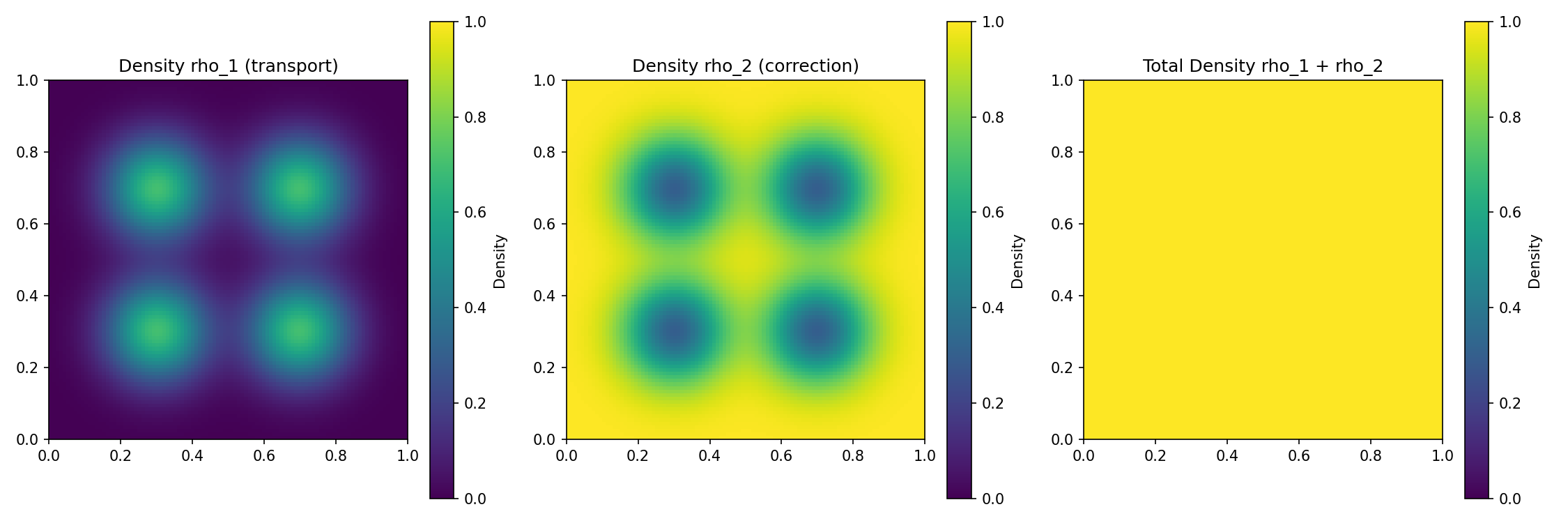}
		\includegraphics[width=0.9\textwidth,height=0.14\textheight,center]{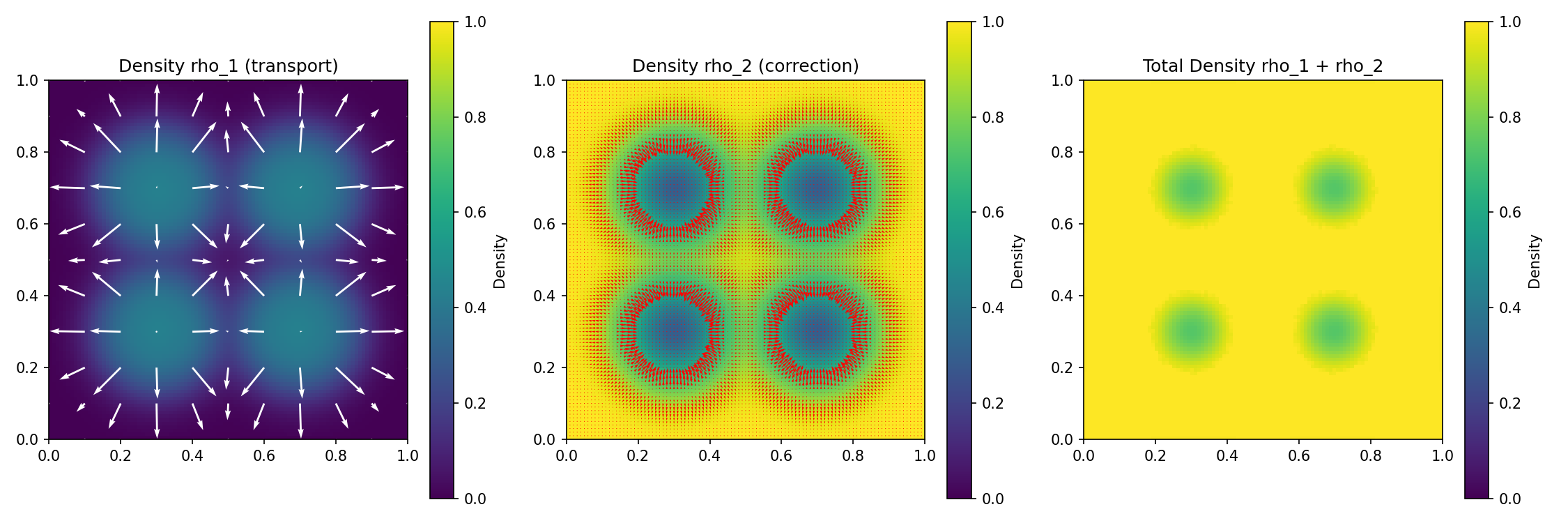}
		\includegraphics[width=0.9\textwidth,height=0.14\textheight,center]{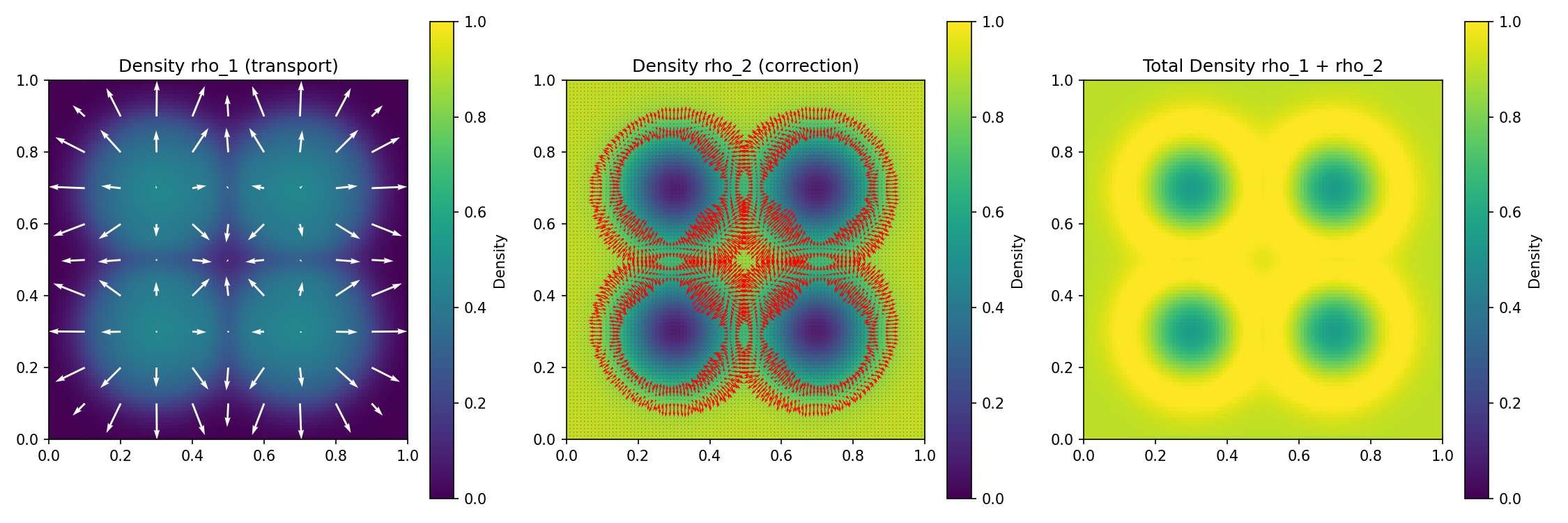}
		\includegraphics[width=0.9\textwidth,height=0.14\textheight,center]{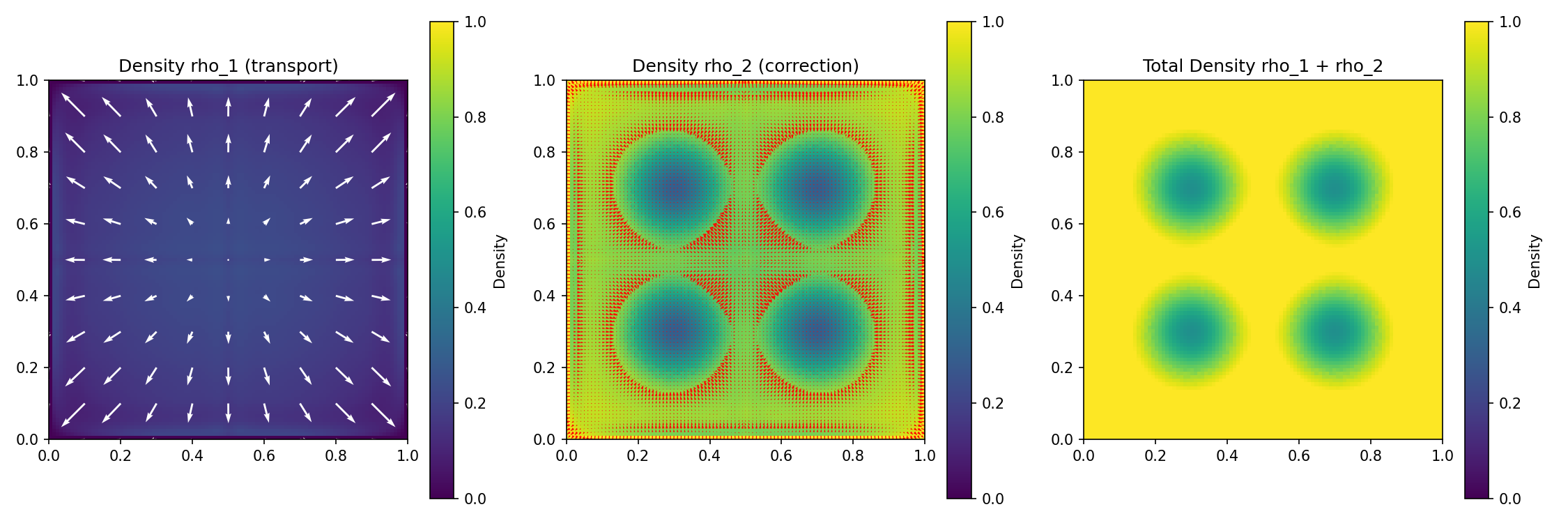}
		\includegraphics[width=0.9\textwidth,height=0.14\textheight,center]{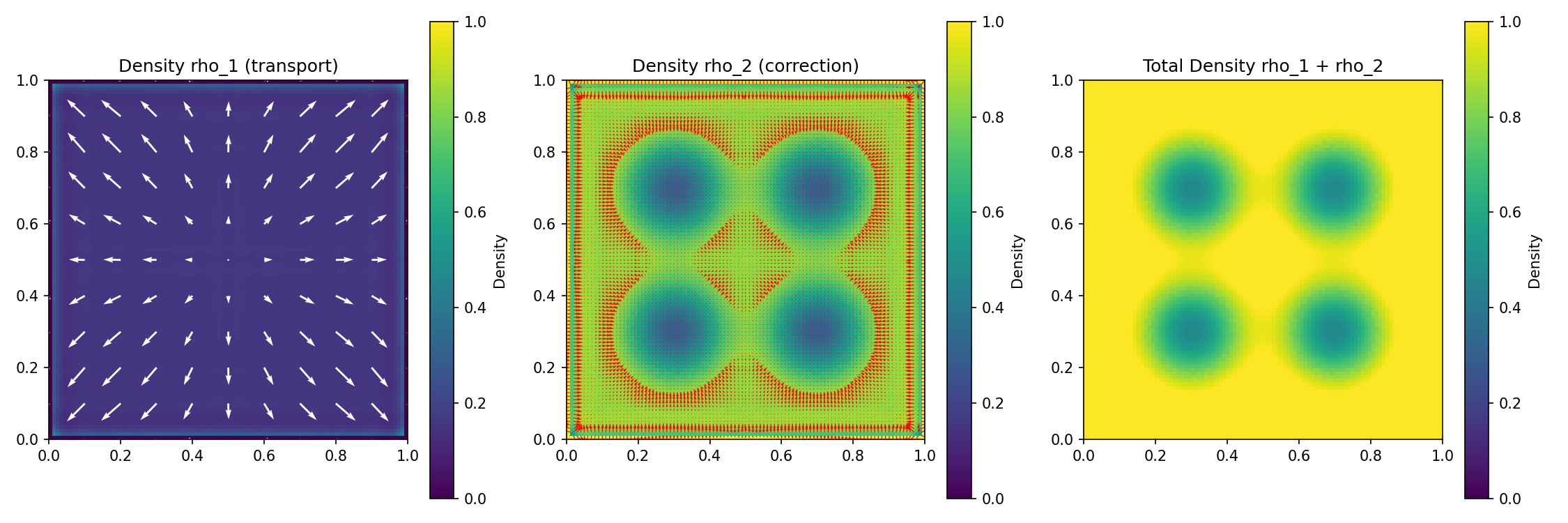}
		\caption{In contrast to the earlier scenario in Example $4$, the reflective nature of the boundary conditions serves to preserve congestion within the domain upon population $rho_1$ reaching the boundary.}
		\label{fig:img1-5}
	\end{figure}

	\subsubsection{Example 6:} A scenario similar to Example $4$ with three initial Gaussian distributions and zero-padding convolution which  effectively forces population $\rho_1$ to exit the domain. 
 
	\begin{itemize}
	\item[]	\textbf{Data set}  \item $\rho_{01}=0.5 (K_\lambda^{\mu_1} +K_\lambda^{\mu_2} + K_\lambda^{\mu_3}+K_\lambda^{\mu_4} )$ in $\Omega:=[0,1)\times [0,1]$
		with $\lambda =0.1,$ $\mu_1=(0.2, 0.5),$ $\mu_2(0.5, 0.2),$ $\mu_3=(0.5, 0.8)$

		\item $\rho_{02}=0.9(1-\rho_{01}).$
		
		\item The vector field potential here is given by $\varphi = \rho_1\star K_\sigma^\mu,$ where $\sigma =5$ and $\mu=(0,0).$  
	\end{itemize}

	\begin{figure}[H]
		\centering
		\includegraphics[width=0.9\textwidth,height=0.14\textheight,center]{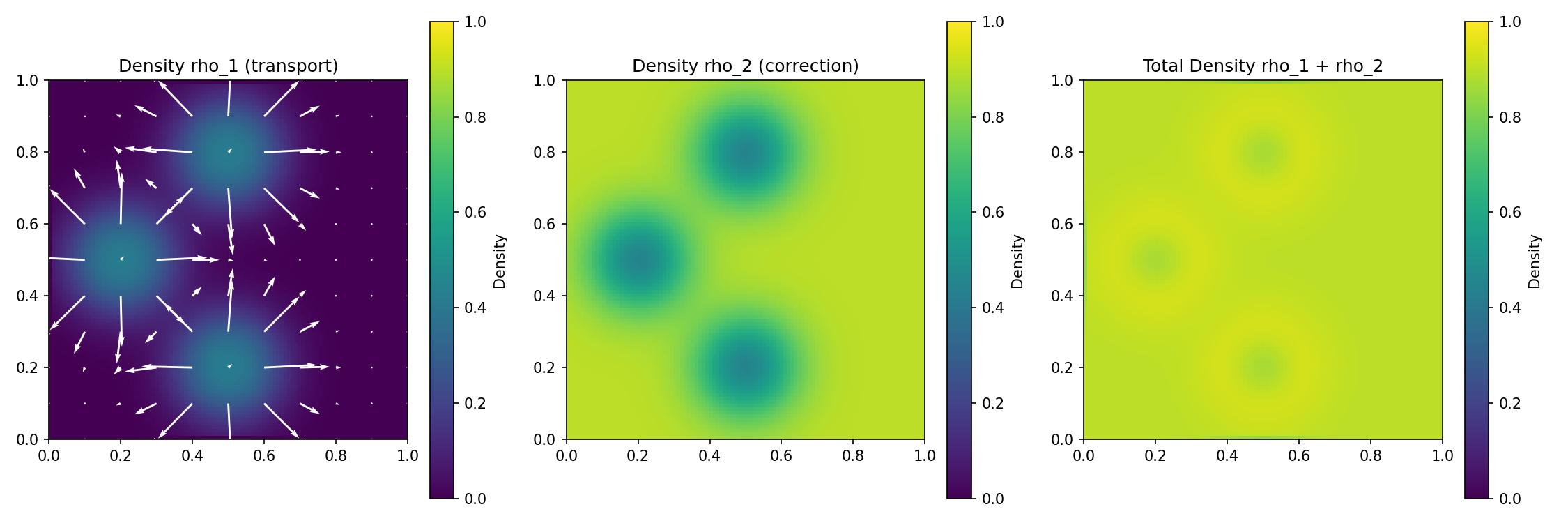}
		\includegraphics[width=0.9\textwidth,height=0.14\textheight,center]{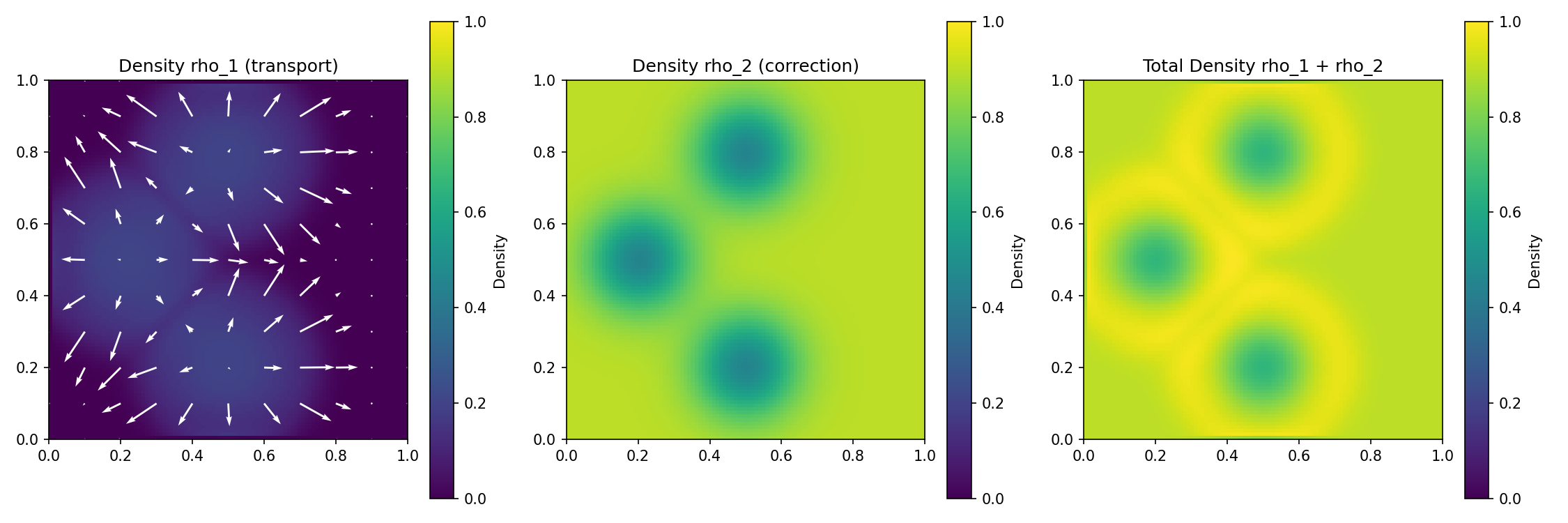}
		\includegraphics[width=0.9\textwidth,height=0.14\textheight,center]{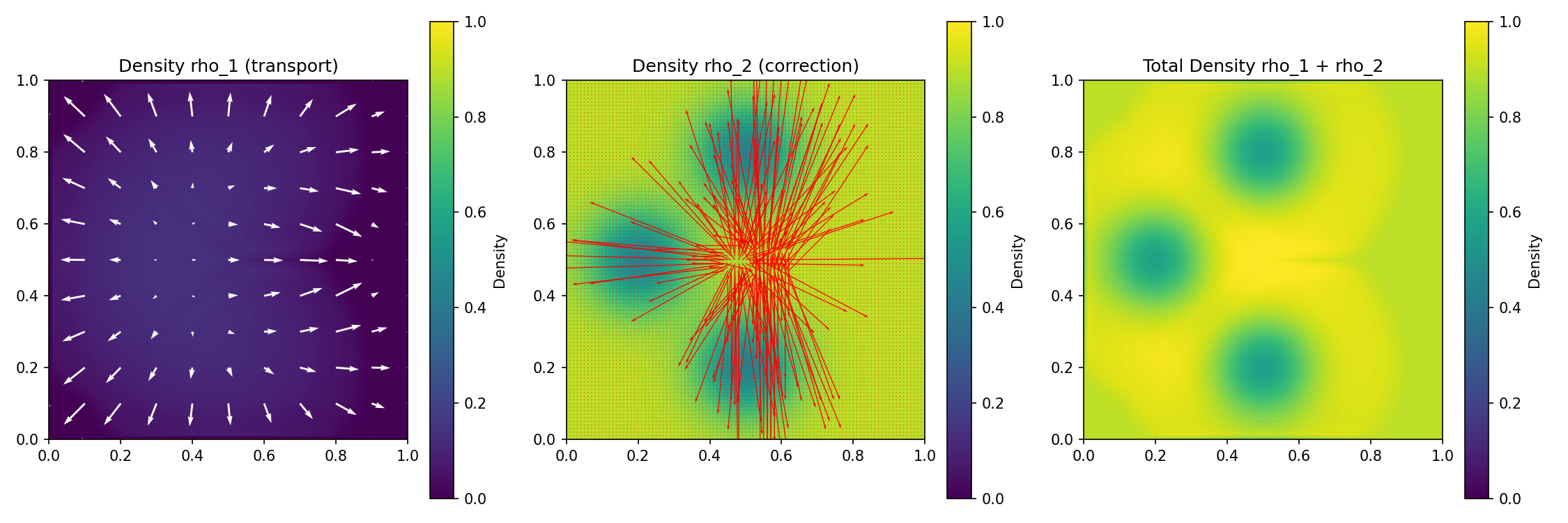} 
		\includegraphics[width=0.9\textwidth,height=0.14\textheight,center]{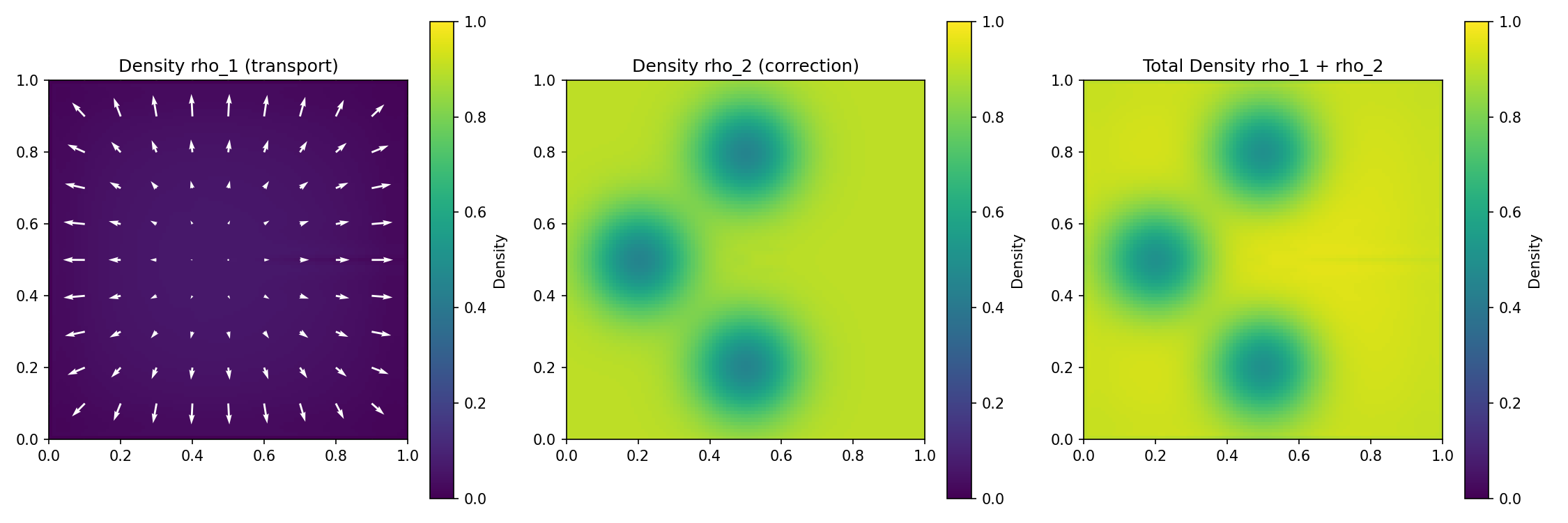}
		\caption{As in Fig. $4,$, by the end of the simulation, the entire population $\rho_1$ continues to leave the domain without any congestion.}
		\label{fig:img1-6}
	\end{figure} 
	
	\subsubsection{Example 7:} A scenario similar to Example $6,$  with the same data set, but we modify the boundary condition  by implementing "reflect" boundary conditions. 
	\begin{figure}[H]
		\centering
		\includegraphics[width=0.9\textwidth,height=0.14\textheight,center]{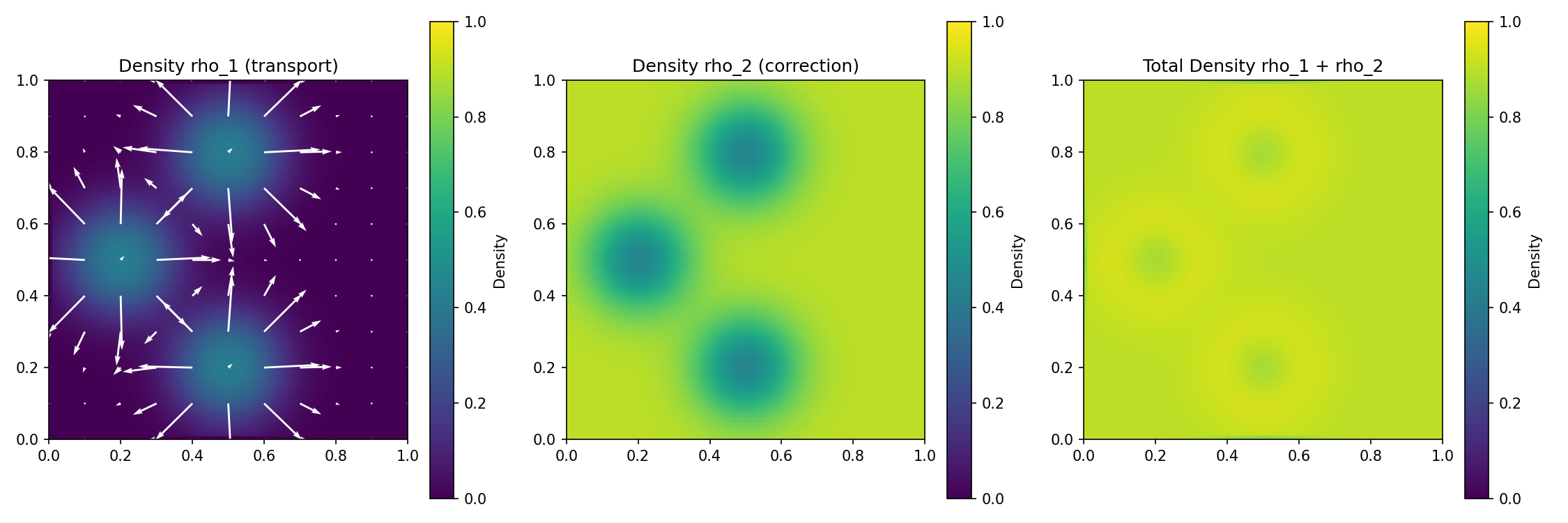}
		\includegraphics[width=0.9\textwidth,height=0.14\textheight,center]{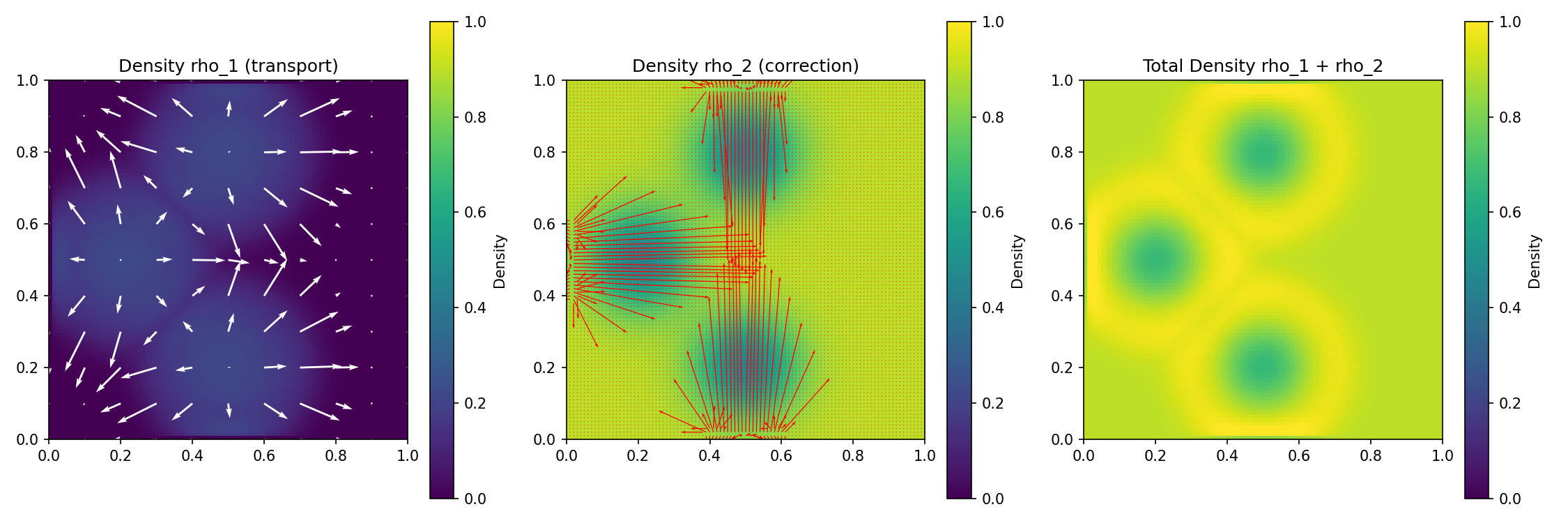}
		\includegraphics[width=0.9\textwidth,height=0.15\textheight,center]{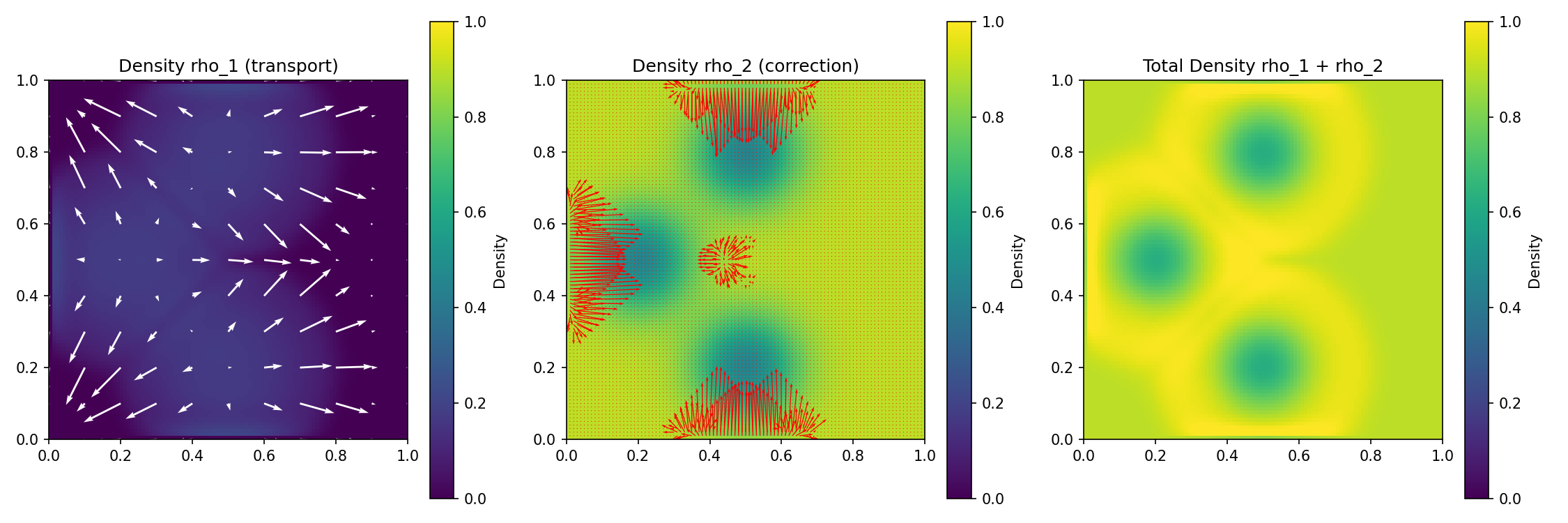}
		\includegraphics[width=0.9\textwidth,height=0.14\textheight,center]{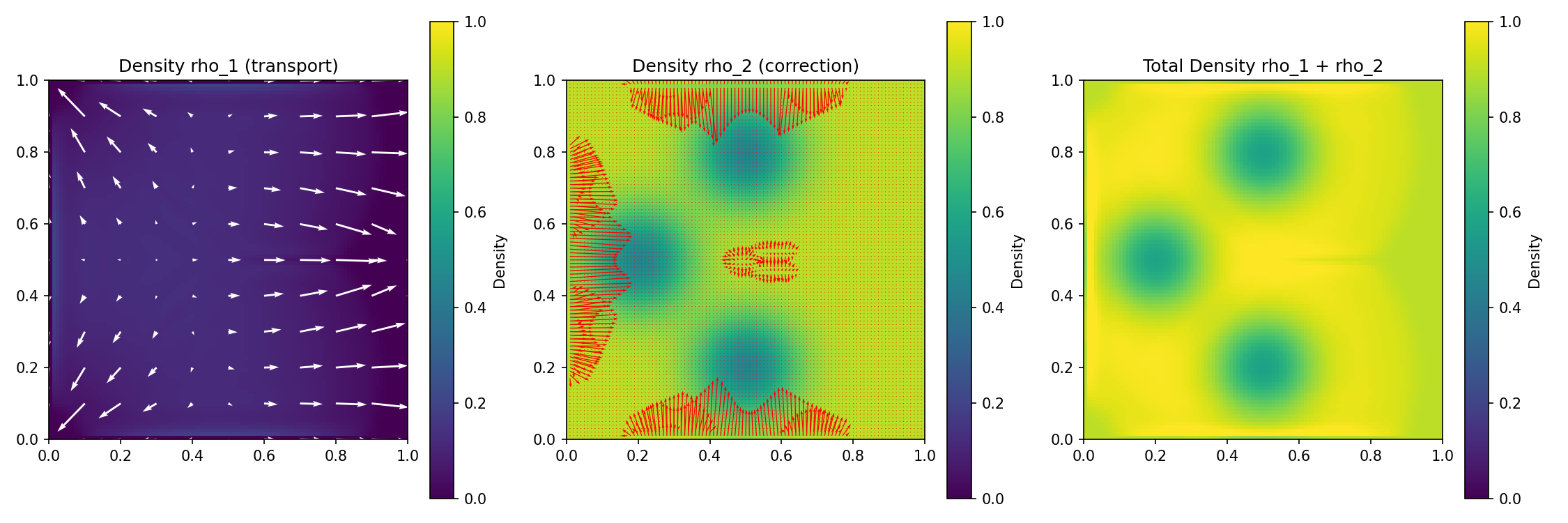}
		\includegraphics[width=0.9\textwidth,height=0.14\textheight,center]{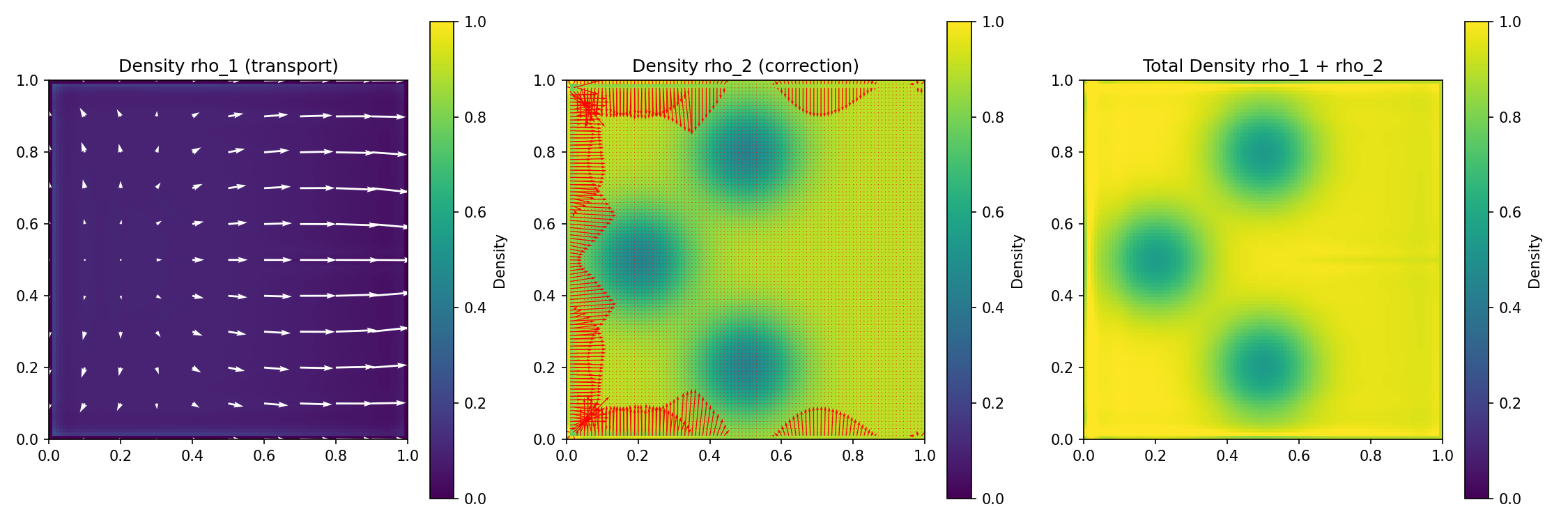}
		\includegraphics[width=0.9\textwidth,height=0.14\textheight,center]{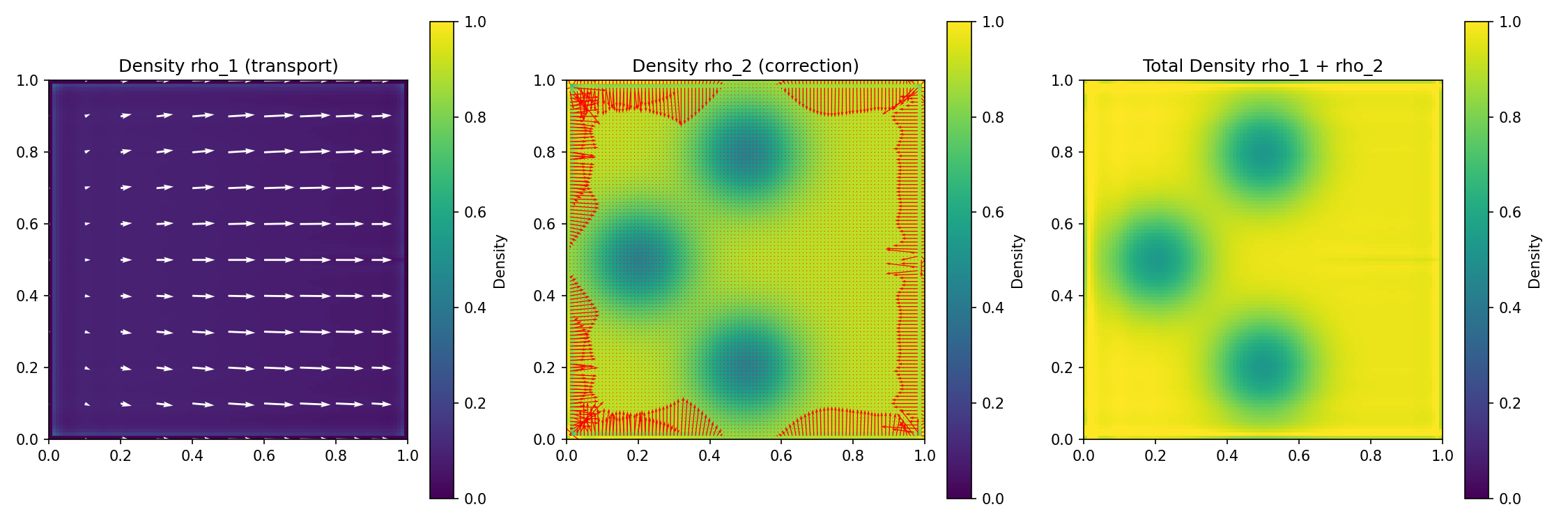}
		\caption{Unlike the previous case, the boundary conditions lead to a persistent congestion near the boundary due to the inward reflection of the population.}
		\label{fig:img1-7}
	\end{figure}

	\subsubsection{Example 8:} Population $\rho_1,$ with an initial distribution of a Gaussian function, undergoes diffusion with Dirichlet boundary conditions. This leads to population expansion and eventual exit from the domain, while population $\rho_2,$ initially surrounding $\rho_1,$ remains confined.
	
	\begin{figure}[H]
		\centering
		\includegraphics[width=0.9\textwidth,height=0.14\textheight,center]{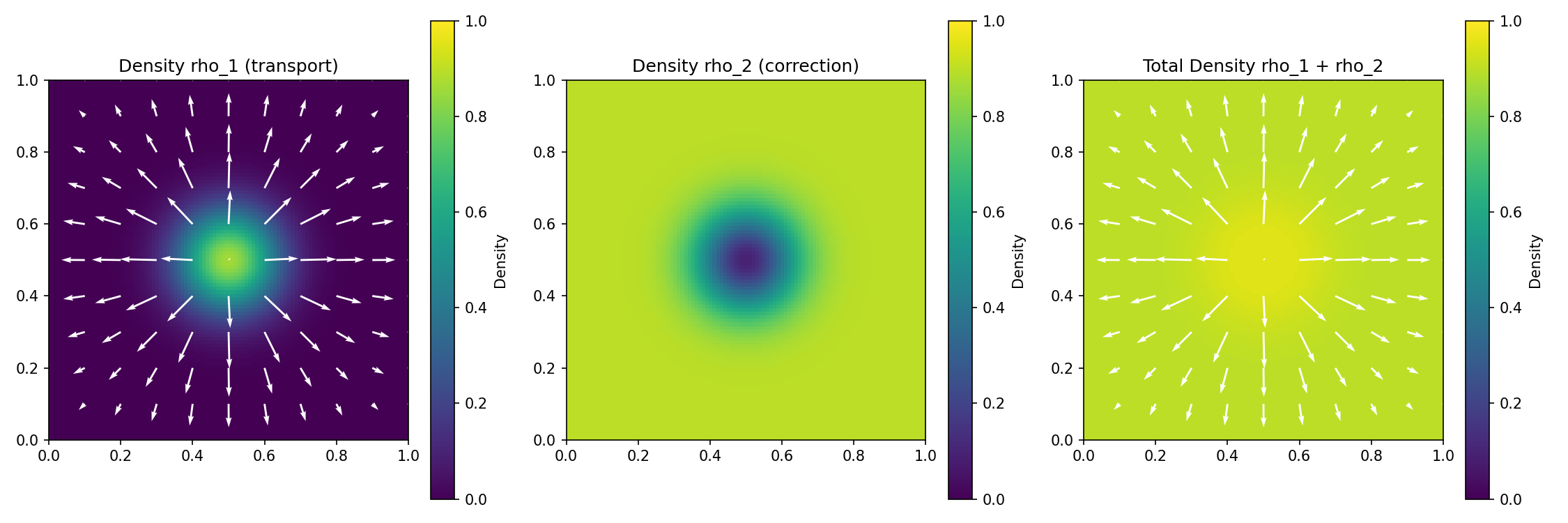}
		\includegraphics[width=0.9\textwidth,height=0.14\textheight,center]{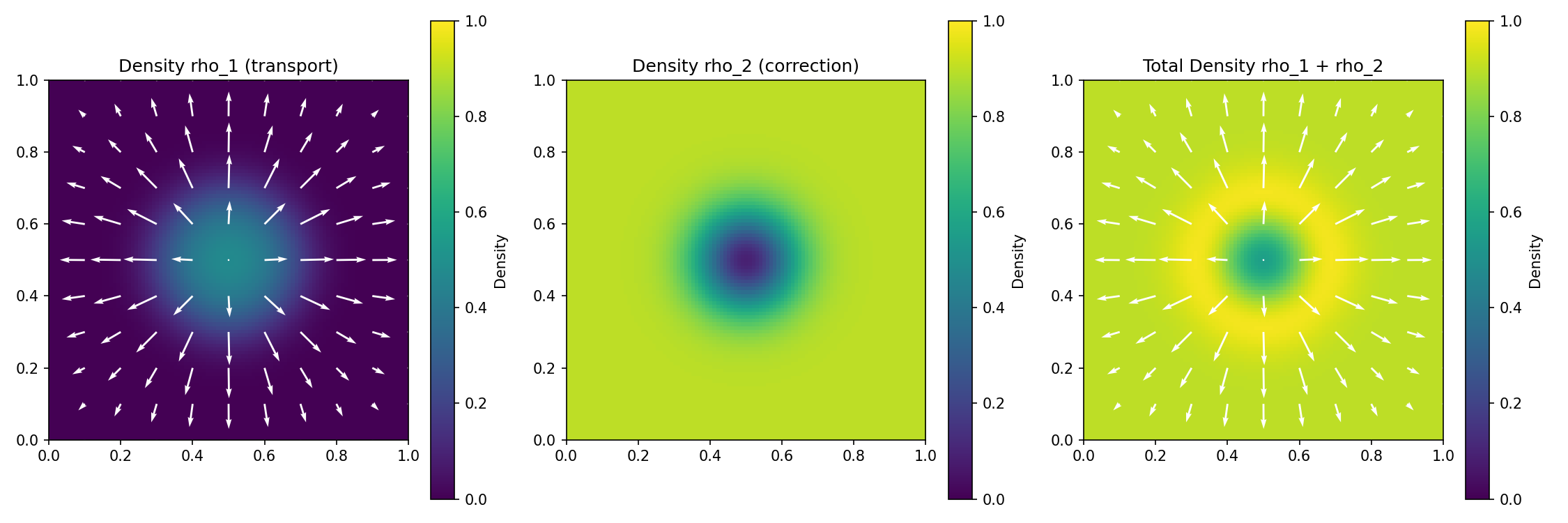}
		\includegraphics[width=0.9\textwidth,height=0.14\textheight,center]{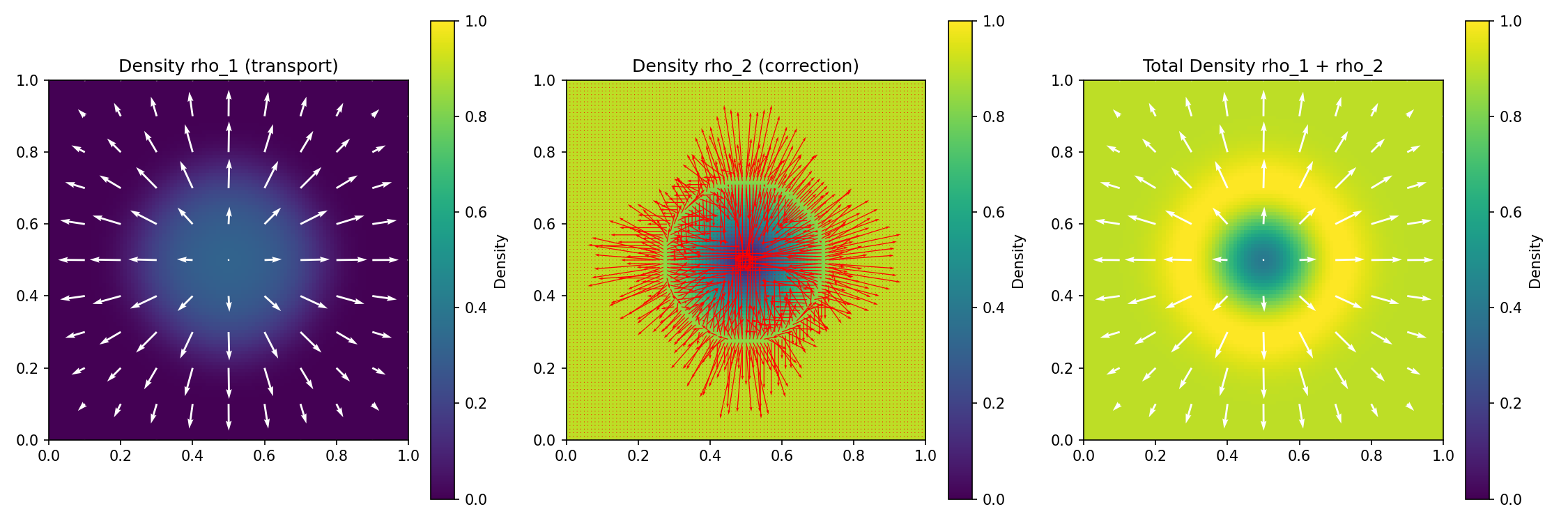}
		\includegraphics[width=0.9\textwidth,height=0.14\textheight,center]{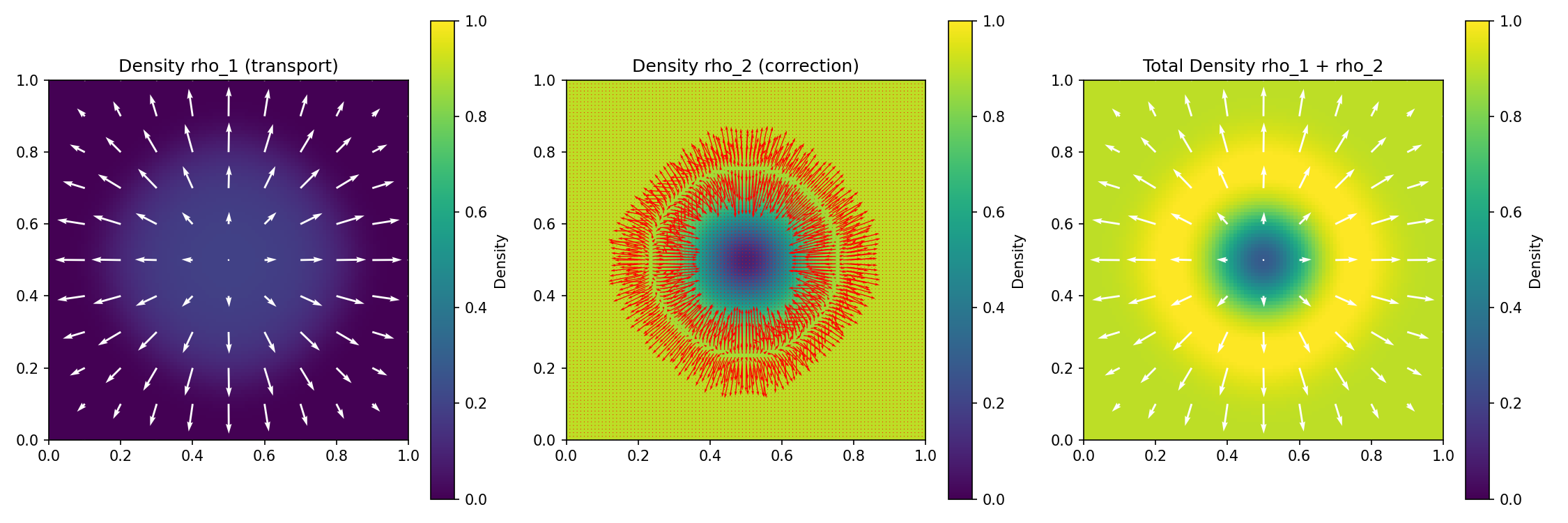}
		\includegraphics[width=0.9\textwidth,height=0.14\textheight,center]{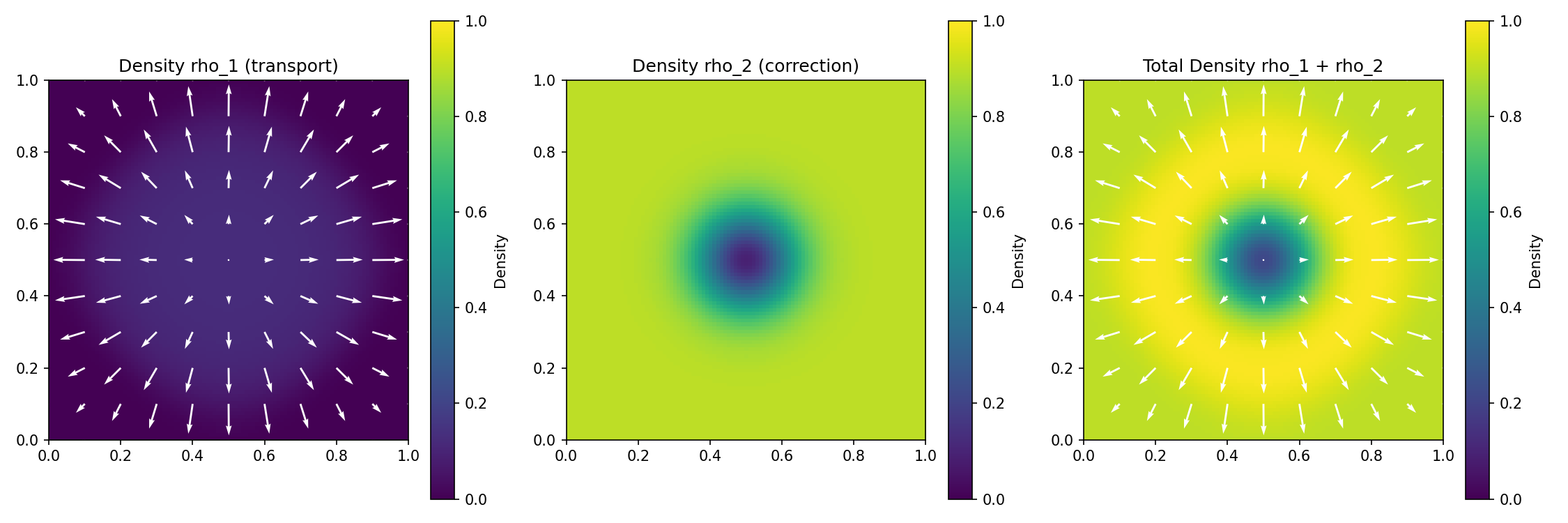}
		\caption{We observe a similar behavior to the Dirichlet case with a non-local potential: an undergo phase of congestion followed by stabilization into a non-congested dynamic.}
		\label{fig:img1-8}
	\end{figure}

	\subsubsection{Example 9:}  A scenario similar to Example $5,$ but employing different initial data distributions for both $\rho_1$ and $\rho_2.$
	\begin{figure}[H]
		\centering
		\includegraphics[width=0.9\textwidth,height=0.14\textheight,center]{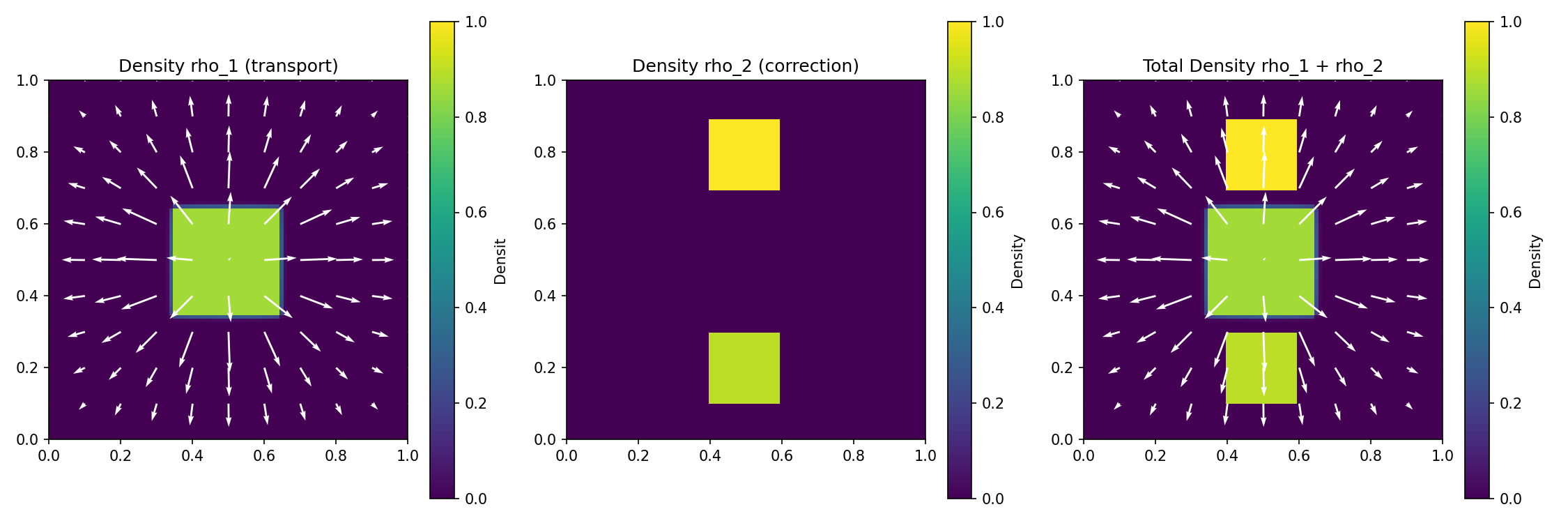}
		\includegraphics[width=0.9\textwidth,height=0.14\textheight,center]{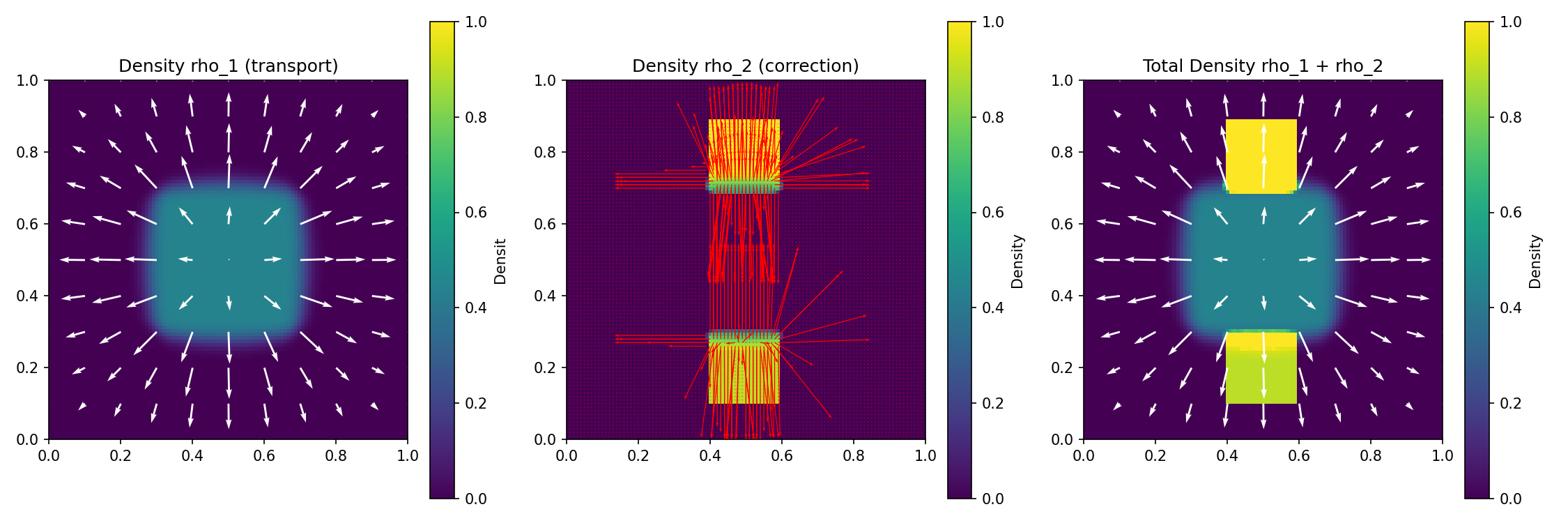}
		\includegraphics[width=0.9\textwidth,height=0.14\textheight,center]{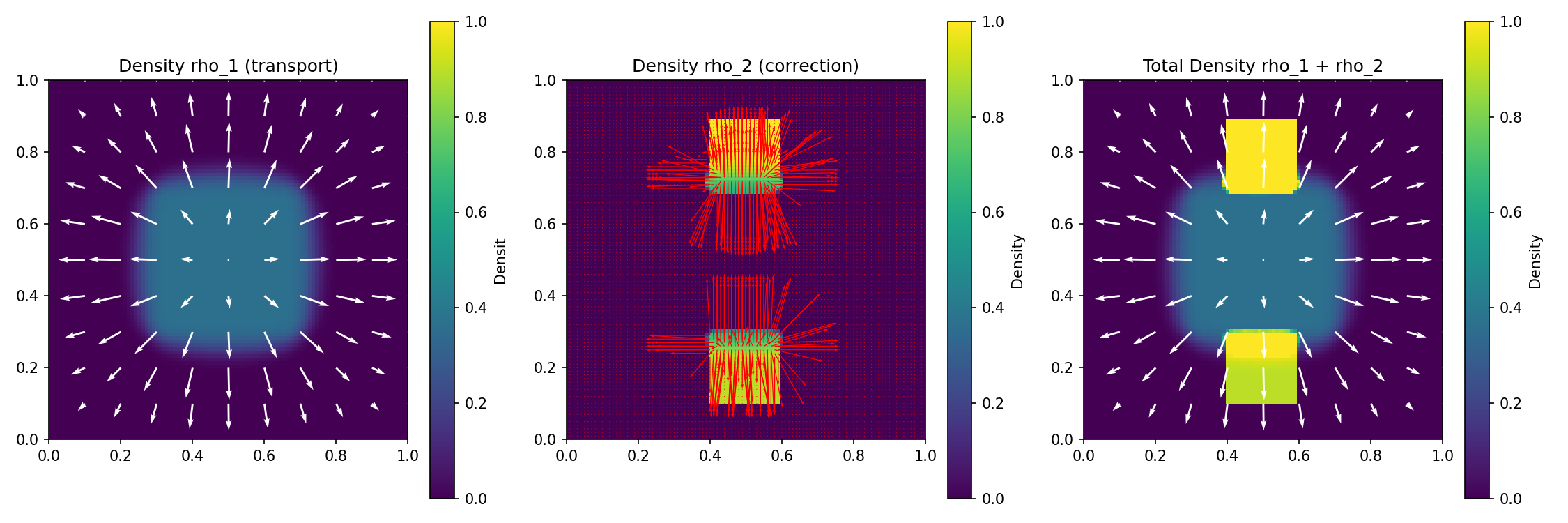}
		\includegraphics[width=0.9\textwidth,height=0.14\textheight,center]{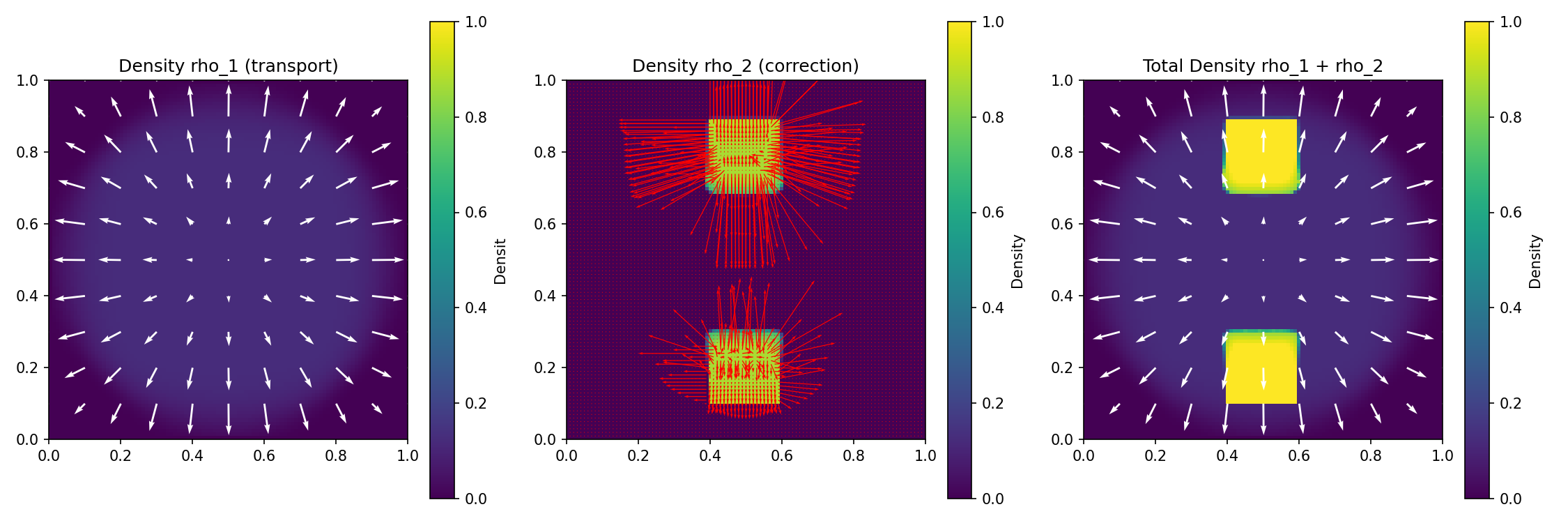}
		\includegraphics[width=0.9\textwidth,height=0.14\textheight,center]{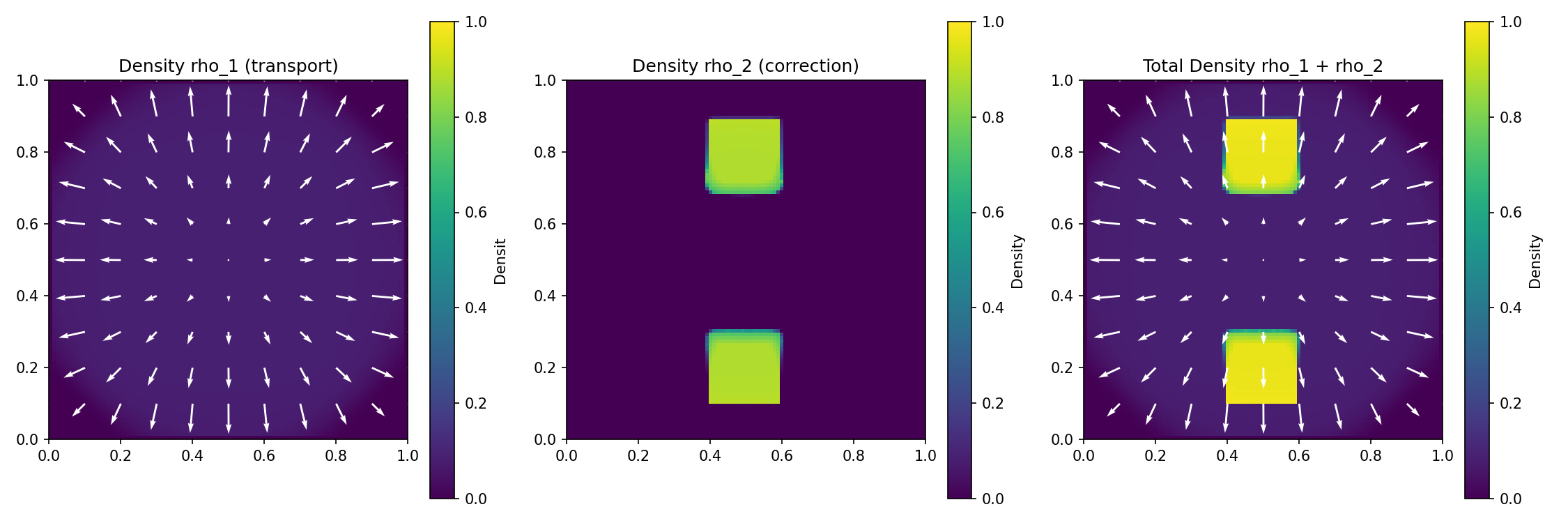}
		\caption{Similar outcomes were obtained when applying the method to different datasets, consistent with the findings in Example $8.$}
		\label{fig:img1-9}
	\end{figure}
	
	\section*{Acknowledgments } NI was partially supported by the CNRST of Morocco under the FINCOM program. He is also grateful to the EST of Essaouira for its hospitality.  Some numerical results were obtained during SG's visit to XLIM at the University of Limoges. SG thanks XLIM for their hospitality and acknowledges the support of Cadi Ayyad University.

\end{document}